\pgfplotsset{width=6cm,compat=1.18}
\definecolor{meinBlau}{rgb}{0.2,0.2,0.9} 
\definecolor{blau}{rgb}{0,0,0.75} 
\definecolor{rot}{rgb}{0.74,0,0} 
\newtheorem{theorem}{Theorem}
\newtheorem{lem}[theorem]{Lemma}
\newtheorem{prop}{Proposition}
\theoremstyle{definition}
\newtheorem{remark}{Remark}
\newtheorem{example}{Example}
\newtheorem{defi}{Definition}
\def\P{{\mathbb {P}}}
\def\E{{\mathbb {E}}}
\newcommand{\fallfak}[2]{\ensuremath{#1^{\underline{#2}}}}
\newcommand{\auffak}[2]{\ensuremath{#1^{\overline{#2}}}}
\newcommand{\N}{\ensuremath{\mathbb{N}}}
\newcommand{\R}{\ensuremath{\mathbb{R}}}
\DeclareMathOperator{\Beta}{B}
\DeclareMathOperator{\GG}{GenGemma}
\DeclareMathOperator{\Dir}{Dir}
\DeclareMathOperator{\Res}{\textrm{Res}}
\newcommand{\ML}{\operatorname{ML}}
\newcommand{\BML}{\operatorname{ML}}
\def\tilt{\operatorname{tilt}}
\newcommand\Polya{P\' olya}
\newcommand{\I}{\ensuremath{\mathbb{I}}}
\newcommand\field{\mathcal{F}}
\newcommand\given{\, \vert \, }
\newcommand\mw{\mathcal{W}_N}
\newcommand{\mW}[1]{\mathcal{W}_{#1}}
\newcommand{\Wn}{\mathbf{W}_{N}}
\newcommand{\Wns}{\mathbf{W}_{N,\mathbf{s}}}
\newcommand{\Wnms}{\mathbf{W}_{N-1,\mathbf{s}}}
\newcommand{\Stir}[2]{\genfrac{ \{ }{ \} }{0pt}{}{#1}{#2}}
\DeclareMathOperator{\law}{\overset{\mathcal{L}}{=}}
\DeclareMathOperator{\modop}{mod}
\begin{document}

%

\author[M.~Kuba]{Markus Kuba}
\address{Markus Kuba\\
Department Applied Mathematics and Physics\\
University of Applied Sciences - Technikum Wien\\
H\"ochst\"adtplatz 5, 1200 Wien} 


\title[On P\'olya-Young urn models and growth processes]{On P\'olya-Young urn models and growth processes}

\keywords{P\'olya-Young urn models, exact distribution, limit laws, martingale tail sum, law of the iterated logarithm, increasing trees,
chinese restaurant process, Stirling permutations}%
\subjclass[2000]{60F05, 60C05, 60G42} %

\begin{abstract}
This work is devoted to P\'olya-Young urns, a class of periodic P\'olya urns of importance in the analysis of Young tableaux. We provide several extension of the previous results of Banderier, Marchal and Wallner~\cite[Ann.~Prob.~ (2020)]{BMW2020} on P\'olya-Young urns and also generalize the previously studied model. We determine the limit law of the generalized model, involving the the local time of noise-reinforced Bessel processes. We also uncover a martingale structure, which leads directly to almost-sure convergence of the random variable of interest. This allows us to add second order asymptotics by providing a central limit theorem for the martingale tail sum, as well as a law of the iterated logarithm. We also turn to random vectors and obtain the limit law of P\'olya-Young urns with multiple colors. Additionally, we introduce several growth processes and combinatorial objects, which are closely related to urn models. We define increasing trees with periodic immigration and we related the dynamics of the P\'olya-Young urns to label-based parameters in such tree families. Furthermore, we discuss a generalization of Stirling permutations and obtain a bijection to increasing trees with periodic immigration. Finally, we introduce a chinese restaurant process with competition and relate it to increasing trees, as well as P\'olya-Young urns. 
\end{abstract}

\maketitle

\vspace{-0.5cm}

\section{Introduction}
\subsection{Periodic P\'olya-Eggenberger urn models}
In the classical P\'olya urn model, also called P\'olya-Eggenberger urn scheme~\cite{EggPo1923,EggPo1928,Mahmoud2008}, one starts with an urn containing $w_0$ white and $b_0$ black balls at time $N=0$. At every discrete time step $N\in\N$ a single ball is drawn uniformly at random. Its color is inspected and the drawn ball is reinserted into the urn. The dynamics of the urn process proceed according to the observed color: if the ball is white, $a$ white and $b$ black balls are added into the urn; if the ball is black, $c$ black balls and $d$
white balls are added. Here, $a$, $b$, $c$, $d$ are usually non-negative integers; in some models the conditions are relaxed to non-negative real values; also certain negative values can be considered~\cite{Davidson2018}. It is usually assumed that the urn is tenable, the process of sampling balls and adding/replacing balls can be continued ad infinitum. This process is encoded the so-called ball replacement matrix, sometimes also called transition matrix:
\[
M = 
\left(
\begin{matrix}
a& b\\
c& d\\
\end{matrix}
\right)
\]
An urn model and its associated replacement matrix $M$ is called balanced, if the total number of added balls is constant,
$\sigma=a+b=c+d$. Thus, the total number of balls $T_N$ after $N$ steps is deterministic,
\[
T_N=w_0+b_0 + \sigma \cdot N.
\]
Quantities of of interest are the number of white balls $W_N$ after~$N$ draws, and the number of black balls~$B_N$ after~$N$ draws, 
related by $W_N+B_N=T_N$. P\'olya urn schemes are useful mathematical models for growth processes with
a wide range of applications in several areas. This includes the analysis of random trees,
graphs and algorithms, population genetics and the spread of epidemics. For a discussion of these applications and further information, we refer to the monographs of Johnson and Kotz~\cite{JoKo1977}, Mahmoud~\cite{Mahmoud2008}, as well as the article of Janson~\cite{Janson2004}.

\smallskip

Recently, the following extension of the P\'olya-Eggenberger urn model has been given. 
\begin{defi}[Periodic P\'olya urns~\cite{BMW2020}]
A periodic P\'olya urn of period $p$ with replacement matrices
$M_1, M_2,\dots,M_p$ is a variant of a P\'olya urn in which the replacement matrix
$M_k$ is used at steps $N=np+k$. Such a model is called balanced if each of its replacement
matrices is balanced.
\end{defi}
For $p=1$ such models reduce to the classical P\'olya urn model. A certain class of periodic P\'olya urns called P\'olya-Young urns is of great interest due to its connection to Young tableaux. More precisely, 
with the help of what the so-called density method, results for 
P\'olya-Young urns allow to study the distribution of the southeast
corner of a triangular Young tableau~\cite{BMW2018,BMW2020,BW2021}. This has important implications for the analysis of the continuous limit of large random Young tableaux and has links with random surfaces, see the impressive work~\cite{BMW2020}. The analysis of P\'olya-Young urns was carried out in~\cite{BMW2020} using analytic combinatorics and the method of moments; in more detail, they used a generating function approach based on the history counting technique, leading to solvable differential equations (see~\cite{Dumas2006} for details about this method), which where used to obtain asymptotics of the moments. 

\smallskip 

We consider here a generalization of this class, with an additional positive real parameter $\sigma$ and relaxed conditions on other parameters.

\begin{defi}[Generalized P\'olya-Young urns]
Let $p\in\N$ and $\sigma,\ell>0$ denote positive reals. A Young–P\'olya urn of period $p$, balance $\sigma$ 
and parameter $\ell$ is a periodic P\'olya urn of period $p$ (with $w_0>0$ to avoid degenerate
cases) and replacement matrices
\[
M_1=M_2=\dots=M_{p-1}=
\left(
\begin{matrix}
\sigma&0\\
0&\sigma\\
\end{matrix}
\right),
\quad\text{and }
M_{p}=
\left(
\begin{matrix}
\sigma&\ell\\
0&\sigma+\ell\\
\end{matrix}
\right).
\]
\end{defi}
We note that in the previous work $\sigma$ was restricted to the value $\sigma=1$, and also only $\ell\in\N$ was considered.
The random variable of interest is the number of white balls $W_N$ after $N$ draws, as well as its limit law for $N$ tending to infinity.

\smallskip

In the following we provide an analysis of generalized P\'olya-Young urns, extending the previous results in several ways. 
We uncover a martingale structure, 
leading directly to almost sure convergence of the scaled random variable.
We also obtain simple explicit formulas for the moments and also the probability generating function. We obtain a limit law for the number of white balls and also moment convergence. In the special case treated earlier~\cite{BMW2020}, the limit law was described in terms of the Beta distribution and generalized Gamma random variables. For the generalized Young–P\'olya urns, we observe a different nature of the limit law. It is related to the local time of the noise-reinforced Bessel process~\cite{Bertoin2022}. For special choices of $\sigma$, $p$ and $\ell$ we also obtain product distributions, involving the Beta distribution and generalized Gamma random variables. 

\smallskip

We also provide new second order asymptotics, analyzing martingale tail sums. We remind the reader that for a martingale $(\mw)$ converging almost surely to a random variable $\mW{\infty}$, the sequence $(\mw - \mW{\infty})$ is called martingale tail sum. 
Informally, our results for generalized P\'olya-Young urns reads as
\[
W_N \sim \kappa N^{\Lambda}\cdot \mW{\infty} + \beta^{-1}\sqrt{\mW{\infty}}\cdot N^{\frac{\Lambda}2}\cdot \mathcal{N},
\]
where $\mW{\infty}$ is the almost sure limit of $g_N W_N$, where $\Lambda$ and $g_N$ are defined in Theorem~\ref{the1}, or in other words, a multiple of the first order limit of $W_N / N^{\Lambda}$. Here, we write $X_N \sim Y_N+Z_N\mathcal{N}$ when $Z_N^{-1}(X_N-Y_N)\to \mathcal{N}$ in distribution, and $\mathcal{N}$ denotes a zero-mean normal random variable with variance one.
Moreover, we also obtain a law of the iterated logarithm. We rely mostly on the theory of discrete martingales, as well as tools from discrete mathematics. Furthermore, we extend P\'olya-Young models to urns with multiple colors and provide an analysis of the corresponding random vector, counting the number of balls of different colors. Concerning applications, generalized P\'olya-Young urns are then related to a new model of increasing trees, called increasing trees with immigration, and parameters appearing in this new tree models. Furthermore, we discuss also a generalization of Stirling permutations and obtain a bijection to increasing trees with periodic immigration. We define a chinese restaurant process with competition and relate it to increasing trees, as well as P\'olya-Young urns. Finally, we also consider a chinese restaurant process with competition and a cocktail bar.

\subsection{Notation}
Throughout this work we denote with $\N$ the positive integers and with $\N_0$ the non-negative integers. 
We use the notation $\auffak{x}{s}=x(x+1)\dots(x+s-1)$ for the rising factorials, $s\in\N_0$, with $\auffak{x}0=1$.

\section{Preliminaries}
\label{subsec:prelim} 
We collect here the basic properties of several distributions, 
needed later for the identification of limit laws appearing subsequently. We encounter classical distribution, such as the Beta distribution
and the (generalized) Gamma distribution, but also new laws like the local time of a noise reinforced Bessel process, introduced by Bertoin~\cite{Bertoin2022}, and the three parameter Mittag-Leffler law~\cite{BKW2022,Moehle2021}.

\begin{defi}
The beta-distribution $\beta(a,b)$ with real parameters $a,b>0$ is defined by the density function
\[
f(x)=\frac{x^{a-1}(1-x)^{b-1}}{\Beta(a,b)},\quad x\in(0,1),
\]
with $\Beta(a,b)$ denoting the Beta-function.
\end{defi}
The sequence $(\mu_s)$ of raw moments of a beta-distribution satisfies
\[
\E(X^s)=\frac{\Gamma(a+s)\Gamma(a+b)}{\Gamma(a)\Gamma(a+b+s)}.
\]
\begin{defi}
The generalized gamma distribution $\GG(a, b)$ with
with real parameters $a,b>0$ is defined by the density function
\[
f(x)=\frac{b x^{a-1}e^{-x^{b}}}{\Gamma(a/b)},\quad x\in(0,\infty)
\]
where $\Gamma$ is the classical gamma function.
\end{defi}
We note that the generalized gamma distribution can be obtained from the classical Gamma distribution
taking a suitable power. We also point out that the Generalized Gamma distribution with parameters $a,b$ is determined by its sequence of raw moments $(\mu_s)$:
\[
\mu_s=\frac{\Gamma(\frac{a+s}{b})}{\Gamma(\frac{a}{b})},\quad s\ge 0.
\]

\smallskip

For a random variable $X$ with density function $f(x)$, the tilt of $f(x)$ by a nonnegative integrable function $g(x)$ is the density 
\[
\frac{g(x)}{\E(g(X))} \cdot f(x).
\]
An important class of tilted densities are the \textit{polynomially tilted densities}, where one tilts by a polynomial $g(x)=x^c$(with $c$ being any real value such that $\E(X^c)$ is well defined).
We then use the notation
\begin{equation*}
\tilt_c(f(x))=\frac{x^c}{\E(X^c)} \cdot f(x). 
\end{equation*}

Such tilted densities occur in many places in the literature, see~\cite{BKW2022} and the references therein. They are also called~\cite{arratia2018size,Bertoin2022} size-biased distributions (case $c=1$); see~\cite{arratia2018size} for a general discussion of biased distributions. We note in passing that many classes of distributions like the beta distribution, generalized gamma distribution, the $F$-distribution, the beta-prime distribution, and distributions with gamma-type moments~\cite{Jan2010} are closed under the tilting operation. 
The following lemma shows that the operator $\tilt_c$ admits in fact several equivalent definitions
using the density, the moments, or the Laplace transform.

\begin{lem}[Polynomially tilted density functions and moment shifts~\cite{BKW2022}]
\label{LemMain}
Consider a random variable $X$ with moment sequence $(\mu_s)_{s\ge 0}$ and density $f(x)$ with support $[0,\infty)$. 
Now consider a random variable $X_c$ with $c\in\N$, having a distribution uniquely determined by its moments. Then the following properties are equivalent:
\begin{enumerate}
	\item\label{ita} Tilted density: $X_c$ is a random variable with density $f_c(x) = \frac{x^c}{\mu_c}\cdot f(x)$.
	\item\label{itb} Shifted moments: $X_c$ is a random variable with moments $\E(X_c^s)=\frac{\mu_{s+c}}{\mu_c}$. 
	\item\label{itc} Differentiated moment generating function: $X_c$ satisfies 
	$
	\E(e^{tX_c})=\frac{1}{\mu_c}\frac{d^c}{dt^c}\E(e^{tX}).
	$
\end{enumerate}
We write $\tilt_c(X) = X_c$ for corresponding tilted random variable. 
\end{lem}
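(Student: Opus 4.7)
The plan is to prove the three properties equivalent by running a short cycle of implications, using moment determinacy as the essential ingredient for reversing the ``moments $\Rightarrow$ density'' direction. Concretely, I would establish $(\ref{ita}) \Rightarrow (\ref{itb}) \Rightarrow (\ref{itc}) \Rightarrow (\ref{itb}) \Rightarrow (\ref{ita})$, which is really a cycle $(\ref{ita})\Leftrightarrow(\ref{itb})\Leftrightarrow(\ref{itc})$ with the delicate step being $(\ref{itb})\Rightarrow(\ref{ita})$.

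The implication $(\ref{ita}) \Rightarrow (\ref{itb})$ is a direct integration: if $X_c$ has density $f_c(x) = x^c f(x)/\mu_c$ on $[0,\infty)$, then
\[
\E(X_c^s) = \int_0^\infty x^s \cdot \frac{x^c}{\mu_c} f(x)\,dx = \frac{\mu_{s+c}}{\mu_c},
\]
which already requires only that $\mu_{s+c}$ is finite. For the equivalence $(\ref{itb}) \Leftrightarrow (\ref{itc})$, I would expand the moment generating function formally as $\E(e^{tX}) = \sum_{s\ge 0} \mu_s t^s/s!$ and differentiate $c$ times term-by-term, obtaining
\[
\frac{1}{\mu_c}\frac{d^c}{dt^c}\E(e^{tX}) = \frac{1}{\mu_c}\sum_{k\ge 0} \mu_{k+c}\,\frac{t^k}{k!} = \sum_{k\ge 0} \frac{\mu_{k+c}}{\mu_c}\cdot\frac{t^k}{k!},
\]
which is precisely the moment generating function associated with the moment sequence in~(\ref{itb}). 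Interchanging differentiation and summation is legitimate in any open interval where the series $\sum \mu_s t^s/s!$ has a positive radius of convergence, which is implicit in writing down $\E(e^{tX})$ in~(\ref{itc}).

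The direction $(\ref{itb})\Rightarrow(\ref{ita})$ is the one place where the moment determinacy hypothesis bites. Having established in the first step that the tilted density in~(\ref{ita}) does produce the moment sequence $(\mu_{s+c}/\mu_c)$, the random variable described in~(\ref{itb}) has the same moments as one with density $f_c$. Since by assumption $X_c$ is determined by its moments, the two distributions must coincide and $X_c$ has density $f_c$. The main obstacle, and the reason the hypothesis is required, is exactly this: without moment determinacy, infinitely many distributions can share the sequence $(\mu_{s+c}/\mu_c)$, and one cannot recover~(\ref{ita}) from~(\ref{itb}) alone. Once moment determinacy is granted, the only routine point to keep track of is that $f_c$ is a bona fide probability density, which follows from $f\ge 0$, $\mu_c>0$, and $\int_0^\infty x^c f(x)\,dx=\mu_c$.
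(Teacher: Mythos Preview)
Your proof is correct and proceeds along the natural lines. Note, however, that the paper does not actually supply its own proof of this lemma: it is quoted from~\cite{BKW2022} and stated without argument, so there is no in-paper proof to compare against. Your cycle $(\ref{ita})\Rightarrow(\ref{itb})$, $(\ref{itb})\Leftrightarrow(\ref{itc})$, and $(\ref{itb})\Rightarrow(\ref{ita})$ via moment determinacy is the standard and expected route, and you have correctly isolated the one nontrivial point, namely that moment determinacy is precisely what is needed to pass from the shifted moment sequence back to the tilted density.
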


We also note the following property:
\begin{align}
\tilt_{c_1}\big(\tilt_{c_2}(X)\big) = \tilt_{c_2}\big(\tilt_{c_1}(X)\big)
=\tilt_{c_1+c_2}(X)=X_{c_1+c_2}.
\label{eq:tilt}
\end{align}
This follows by direct computation, for example
\begin{align*}
\tilt_{c_1}\big(\tilt_{c_2}(f)\big)
&=\tilt_{c_1}\Big(\frac{x^{c_2}}{\mu_{c_2}}\cdot f(x)\Big)
= \frac{x^{c_1+c_2}}{\frac{\mu_{c_1+c_2}}{\mu_{c_2}}\cdot\mu_{c_2}}\cdot f(x)\\
&=\frac{x^{c_1+c_2}}{\mu_{c_1+c_2}}\cdot f(x)
=\tilt_{c_1+c_2}\big( f(x)\big).
\end{align*}

\begin{remark}[Tilt with $c\in \R$]
\label{rem:tilt}
One may extend the equivalence of the properties~\eqref{ita} and~\eqref{itb} to $c\in \R$, assuming that the corresponding moments exist. 
For non-integer real $c$ property~\eqref{itc} involves the fractional derivative.
\end{remark}

Next, we apply the tilt-operator to a random variable arising in the context of a noise reinforced Bessel process
of dimension $0< d <2$ and with reinforcement parameter $r\in(-\infty,1/2)$\footnote{Bertoin uses the letter $p$ for the reinforcement, which is reserved in our work for the periodicity parameter of P\'olya-Young urns}. Bertoin~\cite{Bertoin2022} studied the local time $\hat{L}_t$ of the noise reinforced Bessel process with parameters $d$ and $r$ at level $t$ and obtained various properties of it. This random variable also appears as a limit law in one-sided tree destruction procedures~\cite{FKP2006,KuPa2023}, as well as in the context of so-called $\alpha$-sun distributions~\cite{Simon2023}. In the following we collect a result on the structure of the moments of $\hat{L}_t$ based on~\cite{Bertoin2022}.
\begin{lem}[Moments of the local time of a noise reinforced Bessel process]
\label{lem:PanKu1}
Let $T$ denote a random variable with raw moments
\[
\E(T^s)=\prod_{j=1}^{s}j\cdot\frac{\Gamma(j\beta)}{\Gamma(\alpha+j\beta)}=\Gamma(s+1)\cdot\prod_{j=1}^{s}\frac{\Gamma(j\beta)}{\Gamma(\alpha+j\beta)}.
\]
Let $L=\hat{L}_t/\kappa$ denote the scaled local time of the noise reinforced Bessel process with parameters $d$ and $r$ at level $t$, 
with scale parameter
\[
\kappa=\frac{(2t)^{\alpha}\cdot\Gamma(1+\alpha)}{(1-2r)^{\alpha}\cdot\Gamma(1-\alpha)}.
\]
Then, $T\law \tilt_1(L)$. The parameters $\alpha,\beta$ of $T$ are related to the dimension $d$
and the reinforcement parameter $r$ of the process by
\begin{equation}
\label{eqn:alphaBeta}
\alpha=1-d/2\in(0,1) \text{ and } \beta=\frac{\alpha}{1-2r}>0, 
\end{equation}
or equivalently
\[
d=2(1-\alpha),\quad r=\frac12 - \frac{\alpha}{2\beta}.
\]
\end{lem}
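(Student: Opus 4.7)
The plan is to establish the distributional identity by matching moment sequences and invoking the moment-shift characterization of the tilt operator from Lemma~\ref{LemMain}, part~\eqref{itb}. Since the moments of $T$ given in the statement grow no faster than $s!$ times a product of Gamma-ratios, Carleman's criterion holds and $T$ is determined by its moments; it therefore suffices to verify that $\E(L^{s+1})/\E(L)=\E(T^s)$ for every $s\ge 0$.

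The starting point is Bertoin's explicit formula~\cite{Bertoin2022} for the raw moments $\E(\hat L_t^s)$ of the local time of the noise-reinforced Bessel process. In Bertoin's parametrization these moments factor as a pure $s$-th power of a constant, which after the parameter translation~\eqref{eqn:alphaBeta} is exactly the $\kappa$ defined in the statement, multiplied by a $\Gamma(s)$ factorial-type term coming from Bertoin's integral/recursive representation, and by a product $\prod_{j=1}^{s-1}\Gamma(j\beta)/\Gamma(\alpha+j\beta)$. Dividing by $\kappa^s$ to pass to $L=\hat L_t/\kappa$ cancels the scale factor and yields
\[
\E(L^s)=\Gamma(s)\prod_{j=1}^{s-1}\frac{\Gamma(j\beta)}{\Gamma(\alpha+j\beta)},
\]
so in particular $\E(L)=1$; this is precisely the reason for the specific choice of $\kappa$ in the statement.

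Applying property~\eqref{itb} of Lemma~\ref{LemMain}, the moments of $\tilt_1(L)$ are $\E(L^{s+1})/\E(L)=\E(L^{s+1})$, which by the display above equals $\Gamma(s+1)\prod_{j=1}^{s}\Gamma(j\beta)/\Gamma(\alpha+j\beta)$. This matches $\E(T^s)$ exactly, so $T\law\tilt_1(L)$ follows from moment determinacy. The parameter translation~\eqref{eqn:alphaBeta} is then immediate from Bertoin's definitions: $\alpha=1-d/2$ encodes the Bessel dimension and $\beta=\alpha/(1-2r)$ encodes the reinforcement, with inverse relations $d=2(1-\alpha)$ and $r=\tfrac12-\alpha/(2\beta)$.

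The main obstacle lies in the bookkeeping of constants: reconciling Bertoin's normalizing factors, in particular the terms $\Gamma(1\pm\alpha)$, $(2t)^\alpha$, and $(1-2r)^{-\alpha}$, with the clean factorization of $\E(L^s)$ used above, and in particular verifying that the scaling constant that naturally appears in Bertoin's moment formula coincides precisely with the $\kappa$ stated. Once this translation is checked, the identity reduces to the trivial telescoping ratio $\E(L^{s+1})/\E(L)$ carried out in the previous paragraph, and the base cases $s=0,1$ serve as an easy sanity check that the indexing in the product $\prod_{j=1}^{s}$ (rather than $\prod_{j=1}^{s-1}$) is correct.
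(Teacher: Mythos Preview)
Your approach is essentially identical to the paper's: quote Bertoin's moment formula for $\hat L_t$, divide by $\kappa^s$ to get the moments of $L$, and then take the ratio $\mu_{s+1}/\mu_1$ to identify the moments of $\tilt_1(L)$ with those of $T$.

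One small bookkeeping slip: after dividing by $\kappa^s$ the paper obtains
\[
\mu_s=\E(L^s)=\frac{1-2r}{\Gamma(1+\alpha)}\cdot\Gamma(s)\cdot\prod_{j=1}^{s-1}\frac{\Gamma(j\beta)}{\Gamma(\alpha+j\beta)},
\]
so $\E(L)=\frac{1-2r}{\Gamma(1+\alpha)}$, not $1$ as you claim. The choice of $\kappa$ is not designed to make $\E(L)=1$; rather, it is exactly the $s$-dependent factor in Bertoin's formula, and an $s$-independent prefactor survives. This does not affect your conclusion, since this extra constant cancels in the ratio $\mu_{s+1}/\mu_1$, which is all that matters for the tilt. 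Your ``sanity check'' paragraph already flags constant-matching as the delicate point, so just be aware that the clean formula you wrote for $\E(L^s)$ is off by this harmless multiplicative constant.
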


We refer the reader to Bertoin~\cite{Bertoin2022} and Simon~\cite{Simon2023} for more properties of $\hat{L}_t$, as well as $T$, 
and a different characterizations of $T$ as an exponential functional~\cite[Corollary 4.3]{Bertoin2022}. We note that for $\alpha=\beta$, $r=0$, the random variable $L=\hat{L}_t/\kappa$ 
reduces to the classical Mittag-Leffler distribution $\ML(\alpha)=
S_\alpha^{-\alpha}$, given in terms of a positive stable law of parameter $\alpha\in(0, 1)$, see~\cite{DarlingKac1957,Feller1949,Feller}, with moment sequence $\mu_s=\Gamma(s+1)/\Gamma(s\alpha+1)$. The special case $\alpha=\frac12$ leads to a half-normal distribution for $L$, 
and thus $T\law \tilt_1(L)$ is given by a Rayleigh law. For the reader's convenience we include the short proof of Lemma~\ref{lem:PanKu1}.
\begin{proof}
The identification is directly done using moment sequences appearing in~\cite{Bertoin2022} (see also \cite{KuPa2023}). There, a noise reinforced Bessel process of dimension $0<d <2$ and with reinforcement parameter $r\in(-\infty,1/2)$ was considered. Using the parametrization~\eqref{eqn:alphaBeta}, the local time $\hat{L}_t$ of the noise reinforced Bessel process has power moments (see \cite[Theorem 1.2]{Bertoin2022}), which can be written as
\[
\E(\hat{L_t}^s)=\kappa(r,t)^s\cdot \frac{1-2r}{\Gamma(1+\alpha)}\cdot \Gamma(s)\cdot\prod_{j=1}^{s-1}\frac{\Gamma(j\beta)}{\Gamma(\alpha+j\beta)},\quad s\ge 1,
\]
where the scale factor $\kappa=\kappa(\alpha,r,t)$ is given by
\begin{equation}
\label{scale}
\kappa=\frac{(2t)^{\alpha}\cdot\Gamma(1+\alpha)}{(1-2r)^{\alpha}\cdot\Gamma(1-\alpha)}.
\end{equation}
This implies that the scaled random variable $L=\hat{L_t}/\kappa$ has moment sequence 
$\mu_s=\E(L^s)$, given by
\[
\mu_s=\frac{1-2r}{\Gamma(1+\alpha)}\cdot \Gamma(s)\cdot\prod_{j=1}^{s-1}\frac{\Gamma(j\beta)}{\Gamma(\alpha+j\beta)}, \quad s\ge 1.
\]
We readily observe that the random variable $T=\tilt_1(L)$ has the stated moment sequence $\E(T^s)=\frac{\mu_{s+1}}{\mu_1}$.
\end{proof}

Next, we add another simple but very useful observation.
\begin{lem}
\label{lem:PanKu2}
Given the random variable $T\law \tilt_1(L)$, where $L$ denotes the scaled local time of a noise-reinforced Bessel process (see Lemma~\ref{lem:PanKu1}, satisfying $\beta=\frac{\alpha}{m}$ for $m\in\N$. 
Then, the sequence of raw moments of $T$ can be written as a product, ranging from one to $m$:
\[
\E(T^s)=\Gamma(s+1)\cdot\prod_{j=1}^{m}\frac{\Gamma(\frac{j\alpha}{m})}{\Gamma(\frac{(s+j)\alpha}{m})},\quad s\ge 1.
\]
\end{lem}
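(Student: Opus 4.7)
The plan is to start from the expression for $\E(T^s)$ stated in Lemma~\ref{lem:PanKu1}, substitute $\beta=\alpha/m$, and then reorganise the resulting $\Gamma$-product by a simple re-indexing argument. Concretely, I would first observe that under the specialisation $\beta=\alpha/m$ the formula reads
\[
\E(T^s)=\Gamma(s+1)\cdot\prod_{j=1}^{s}\frac{\Gamma\bigl(\tfrac{j\alpha}{m}\bigr)}{\Gamma\bigl(\alpha+\tfrac{j\alpha}{m}\bigr)}
=\Gamma(s+1)\cdot\prod_{j=1}^{s}\frac{\Gamma\bigl(\tfrac{j\alpha}{m}\bigr)}{\Gamma\bigl(\tfrac{(j+m)\alpha}{m}\bigr)},
\]
using $\alpha+j\alpha/m=(j+m)\alpha/m$.

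The key step is then to recognise the telescoping structure. If I set $\gamma_k:=\Gamma\bigl(\tfrac{k\alpha}{m}\bigr)$, the right-hand side of the displayed identity equals
\[
\Gamma(s+1)\cdot\frac{\prod_{k=1}^{s}\gamma_k}{\prod_{k=m+1}^{m+s}\gamma_k},
\]
while the target expression in the lemma equals
\[
\Gamma(s+1)\cdot\frac{\prod_{k=1}^{m}\gamma_k}{\prod_{k=s+1}^{s+m}\gamma_k}.
\]
Cross-multiplying the denominators, both identities amount to the obvious equality
\[
\prod_{k=1}^{s}\gamma_k\cdot\prod_{k=s+1}^{s+m}\gamma_k \;=\; \prod_{k=1}^{m}\gamma_k\cdot\prod_{k=m+1}^{m+s}\gamma_k \;=\; \prod_{k=1}^{s+m}\gamma_k,
\]
which is valid regardless of whether $s\le m$ or $s>m$, so no case distinction is needed.

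There is essentially no serious obstacle here; the lemma is a purely algebraic reformulation. The only care required is to make sure the two products index over the same set $\{1,\dots,s+m\}$ after cross-multiplication, so that one can close the argument in a single line without having to split into the regimes $s<m$, $s=m$ and $s>m$. Once the reindexing is in place, the claimed formula follows immediately.
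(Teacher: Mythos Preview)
Your proposal is correct and follows essentially the same approach as the paper: start from the moment formula of Lemma~\ref{lem:PanKu1}, substitute $\beta=\alpha/m$, and reindex the resulting Gamma products. The only difference is cosmetic but worth noting: the paper carries out the regrouping via an explicit case distinction $s\le m$ versus $s>m$, whereas your cross-multiplication argument $AD=BC=\prod_{k=1}^{s+m}\gamma_k$ handles both regimes uniformly in one line, which is a cleaner way to package the same telescoping.
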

\begin{proof}
As the observation was stated in~\cite{KuPa2023} without a proof, we add the simple computations. For $s\le m$ we observe that
\begin{align*}
\prod_{j=1}^{s}\frac{\Gamma(j\beta)}{\Gamma(\alpha+j\beta)}&=\prod_{j=1}^{s}\frac{\Gamma(\frac{\alpha j}m)}{\Gamma(\frac{\alpha(m+j)}{m})}=\bigg(\prod_{j=1}^{s}\frac{\Gamma(\frac{\alpha j}m)}{\Gamma(\frac{\alpha(m+j)}{m})}\bigg)
\cdot\prod_{j=s+1}^{m}\frac{\Gamma(\frac{\alpha j}m)}{\Gamma(\frac{\alpha j}m)}\\
&=\frac{\prod_{j=1}^{m}\Gamma(\frac{\alpha j}m)}{\big(\prod_{j=1}^{s}\Gamma(\frac{\alpha(m+j)}{m})\prod_{k=s+1}^{m}\Gamma(\frac{\alpha k}m)}\Big).
\end{align*}
Regrouping the denominator gives the stated result. For $s>m$ we have:
\begin{align*}
\prod_{j=1}^{s}\frac{\Gamma(\frac{\alpha j}m)}{\Gamma(\frac{\alpha(m+j)}{m})}
=\frac{\prod_{j=1}^{m}\Gamma(\frac{\alpha j}m)}{\prod_{j=s+1-m}^{s}\Gamma(\frac{\alpha(m+j)}{m})}
\cdot \frac{\prod_{j=m+1}^{s}\Gamma(\frac{\alpha j}m)}{\prod_{j=1}^{s-m}\Gamma(\frac{\alpha(m+j)}{m})}.
\end{align*}
Shifting the index and cancellation of the latter products gives the desired representation.
\end{proof}

Finally, we collect a recently discovered distribution~\cite{BKW2022,Moehle2021}, which also appeared earlier in the literature~\cite{Dumas2006}. It generalizes the classical Mittag-Leffler distribution, as well as the two-parameter Mittag-Leffler law~\cite{GoldschmidtHaas2015,James2015}.

\begin{defi}[Three-parameter Mittag-Leffler distribution~\cite{BKW2022,Moehle2021}]\label{def:BetaMittagLeffler}
The \textit{three-parameter Mittag-Leffler distribution} $\BML(\alpha,\beta,\gamma)$ is defined as the distribution of the product of independent random variables
	\begin{align*}
	Z \law Y \cdot B^{\alpha}
	\end{align*}
where $Y \law \tilt_{\beta/\alpha}(S_\alpha^{-\alpha})$ denotes a moment-tilted positive stable law of parameter $\alpha\in(0, 1)$
and $B \law \operatorname{Beta}(\beta,\gamma)$, with $\beta>0$, and $\gamma\geq 0$. 
\end{defi}
We note that the three-parameter Mittag-Leffler distribution $\BML(\alpha,\beta,\gamma)$, 
also has the following moments of order $s$ (for $s>0$):
	\begin{equation}
	\label{eq:momBML}
	\E(Z^s)=
	\frac{\Gamma\left(s+\frac{\beta}{\alpha}\right)\Gamma\left(\beta+\gamma\right)}{\Gamma\left(\alpha s+\beta+\gamma\right)\Gamma\left(\frac{\beta}{\alpha}\right)}.
\end{equation}

\section{Analysis of generalized P\'olya-Young urns} 
Throughout this work we use discrete martingales, a classical method used in the analysis of urn models~\cite{Gouet1989,Gouet93,Gouet1997,JKP2011,KuPa2014,Mor2005}. We refer the reader to the book of Williams~\cite{williams1991probability} for a gentle introduction to this topic and to the book of Hall and Heyde~\cite{hallheyde80} for more advanced content.
\subsection{Martingale structure and moments}
First, we emphasize that the total number of balls $T_N$ in a P\'olya-Young urn at time $N=np+k$, with $n\in\N_0$, $0\le k\le p-1$ is given by
\begin{equation}
\label{eq:TotalPoYou}
T_N=w_0+b_0+\big(n(p-1)+k\big)\sigma +n(\sigma+\ell)
=n(p\sigma + \ell)+k\sigma +w_0+b_0.
\end{equation}

\begin{theorem}[Distribution, martingale and almost sure limit]\label{the1}
The $s$th rising factorial moments of the random variable $W_N$,
counting the number of white balls in a generalized P\'olya-Young urn, are given by 
\[
\E(\auffak{(W_N/\sigma)}{s})=\auffak{(w_0/\sigma)}{s}\prod_{j=0}^{N-1}\frac{T_{j}+s\sigma}{T_{j}}.
\]
The probability mass function of $W_N/\sigma$ is determined by the probability generating function, given by 
\[
\E(v^{W_N/\sigma})=\sum_{s\ge 0}\frac{(v-1)^s}{s}
\sum_{r=0}^{s}L_{s,r}(-1)^{s-r}\auffak{(w_0/\sigma)}{r}\prod_{j=0}^{N-1}\frac{T_{j}+r\sigma}{T_{j}},
\]
where $L_{s,r}=\binom{s}{r}\frac{(s-1)!}{(r-1)!}$ denote the Lah numbers. Let $N=np+k$, $0\le k\le p$ and let $n\to\infty$.
\begin{equation}
\label{eq:Def_gN}
g_N= \prod_{j=0}^{N-1}\frac{T_{j}}{T_{j}+\sigma},\quad
g_N\sim \kappa \cdot N^{-\Lambda} ,\quad \kappa= p^{\Lambda}\prod_{r=0}^{p-1}\frac{\Gamma(\frac{r}\psi  +\frac{w_0+b_0}{\sigma\psi}+\frac{1}\psi)}{\Gamma(\frac{r}\psi +\frac{w_0+b_0}{\sigma \psi})},
\end{equation}
with $\psi=p+\ell/\sigma$ and $\Lambda=\frac{p}{\psi}=\frac{p}{p+\ell/\sigma}$. 

\smallskip

The random variable $\mw=g_N\cdot W_N$ is a non-negative martingale 
and converges almost surely to a limit $\mW{\infty}$. 
Furthermore, the random variable $W_N/N^{\Lambda}$ satisfies 
\[
W_N/N^{\Lambda}\to \frac{1}{\kappa}\mW{\infty}. 
\]
\end{theorem}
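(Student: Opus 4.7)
The plan is to establish the four assertions of the theorem in sequence, each building on the previous one, with almost sure convergence and the identification of the limit falling out naturally at the end.

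First I would derive a one-step recursion for the rising factorial moments. The key observation is that the two replacement matrices $\sigma I$ and $M_p$ induce the \emph{same} conditional distribution on $W_{N+1}$ given $\mathcal{F}_N$: in both cases $W_{N+1}=W_N+\sigma$ with probability $W_N/T_N$ and $W_{N+1}=W_N$ otherwise; the matrices differ only in how they update $B_N$ and $T_N$. Using the identity $\auffak{(x+1)}{s}=\auffak{x}{s}\cdot(x+s)/x$, a short computation yields, uniformly in $N$,
\[
\E\bigl(\auffak{(W_{N+1}/\sigma)}{s}\,\big\vert\,\mathcal{F}_N\bigr)
=\auffak{(W_N/\sigma)}{s}\cdot\frac{T_N+s\sigma}{T_N}.
\]
Iterating from $W_0=w_0$ gives the stated closed form. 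The probability generating function is then obtained by combining the binomial identity $v^n=\sum_s\binom{n}{s}(v-1)^s$ applied to $W_N/\sigma$ with the Lah expansion $\fallfak{x}{s}=\sum_{r=0}^{s}(-1)^{s-r}L_{s,r}\auffak{x}{r}$, which converts falling factorials back into rising ones so that the moment formula can be inserted directly.

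Next, for the asymptotic $g_N\sim\kappa N^{-\Lambda}$ I would exploit the periodic structure of $T_j$. Writing $j=mp+r$ with $0\le r<p$ and setting $\delta:=(w_0+b_0)/\sigma$, equation~\eqref{eq:TotalPoYou} gives $T_j/\sigma=m\psi+r+\delta$. Regrouping the product over $j$ into a double product over $m$ and $r$ and using the standard identity $\prod_{m=0}^{n-1}(m+a)/(m+b)=\Gamma(b)\Gamma(n+a)/(\Gamma(a)\Gamma(n+b))$ gives
\[
g_{np}=\prod_{r=0}^{p-1}\frac{\Gamma((r+\delta+1)/\psi)\,\Gamma(n+(r+\delta)/\psi)}{\Gamma((r+\delta)/\psi)\,\Gamma(n+(r+\delta+1)/\psi)}.
\]
Applying the Stirling ratio $\Gamma(n+a)/\Gamma(n+b)\sim n^{a-b}$ to each of the $p$ factors contributes a combined $n^{-p/\psi}=(N/p)^{-\Lambda}$, and assembling the constants yields exactly the claimed $\kappa$. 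For $N=np+k$ with $0<k\le p$, the finitely many additional ratios are each $1-O(1/n)$ and absorb into the constant without affecting the exponent.

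Finally, the martingale structure falls out of the $s=1$ recursion: since $\E(W_{N+1}\,\vert\,\mathcal{F}_N)=W_N(T_N+\sigma)/T_N$ and $g_{N+1}/g_N=T_N/(T_N+\sigma)$, one immediately has $\E(\mw[N+1]\,\vert\,\mathcal{F}_N)=\mw$. Thus $(\mw)$ is a non-negative martingale with constant mean $\mw[0]=w_0<\infty$, so Doob's almost sure convergence theorem yields $\mw\to\mW{\infty}$ a.s., and dividing by $g_N\sim\kappa N^{-\Lambda}$ gives $W_N/N^{\Lambda}\to\mW{\infty}/\kappa$ almost surely. I expect the main obstacle to lie not in the conceptual structure, which is routine once the coincidence of the white-ball transition laws in both matrix cases is noticed, but in the bookkeeping: carrying the Lah signs through the PGF cleanly and pinning down the exact constant $\kappa$ with its $p^{\Lambda}$ factor and Gamma shifts $(r+\delta+1)/\psi$ in the correct positions.
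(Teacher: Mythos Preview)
Your proposal is correct and follows essentially the same route as the paper: the key insight that both replacement matrices induce the same conditional law for $W_{N+1}$ given $\mathcal{F}_N$, the one-step recursion for the rising factorials, the Lah-number conversion for the PGF, the periodic regrouping of $g_N$ into Gamma ratios, and Doob's theorem for almost sure convergence all match. The only cosmetic difference is that the paper packages the moment recursion via $\binom{W_N/\sigma+s-1}{s}$ and Pascal's rule, whereas you use the equivalent ratio identity $\auffak{(x+1)}{s}=\auffak{x}{s}\,(x+s)/x$ directly; your version is marginally cleaner but the content is identical.
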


\begin{remark}[P\'olya-Young urns and P\'olya urns]
For $\ell\to 0$ the P\'olya-Young urn models degenerate to ordinary P\'olya urns. Similarly, for $p=1$ we also reobtain the P\'olya urns. One readily may perform a sanity check: 
for $\ell\to 0$ we have $\psi\to p$, as well as $\Lambda\to 1$. Additionally, in Theorem~\ref{the2} we observe that for $\ell\to 0$
we re-obtain the classical Beta limit law of ordinary P\'olya urns.
\end{remark}

\begin{remark}[Universality of explicit expressions: triangular models]
\label{rem:universal1}
Interestingly, the explicit expressions persist for a much more general class of urn models.
Following~\cite{BMW2020} we introduce generalized Young–P\'olya urns of period $p$, balance $\sigma$ and
parameters $\ell_j\ge 0$, with replacement matrices
\[
M_j=
\left(
\begin{matrix}
\sigma&\ell_j\\
0&\sigma+\ell_j\\
\end{matrix}
\right),\quad 1\le j\le p.
\]
For all such urns Theorem~\ref{the1} stays true, except for the formula for $T_N$ and the resulting asymptotic expansion of $g_N$. 
This can be directly seen from the proof of Theorem~\ref{the1}, as the parameters $\ell_j$ only change the value of $T_N$.
Similarly, the methods applied subsequently for the proof of Theorem~\ref{the2} can be used to obtain limit laws for this general class.
\end{remark}

\begin{remark}[Universality of the martingale structure]
\label{rem:universal2}
We emphasize that the martingale structure persists for even much more general class.
Following~\cite{BMW2020} we introduce generalized Young–P\'olya urns of period $p$, balances $\sigma_j>0$ and
parameters $\ell_j\ge 0$, with replacement matrices
\[
M_j=
\left(
\begin{matrix}
\sigma_j&\ell_j\\
0&\sigma_j+\ell_j\\
\end{matrix}
\right),\quad 1\le j\le p.
\]
For all such urns we can define a value $g_N$ such that
$\mw=g_N\cdot W_N$ is a non-negative martingale. However, the moments turn out to be more complicated compared to Theorem~\ref{the1}.
\end{remark}

The limit law $\mW{\infty}$ and the closely related limit law of $W_N\cdot N^{-\Lambda}$ 
are readily obtained using the explicit expressions of the rising factorial moments.
Similar to~\cite{BMW2020}, we the limit law is related to the Beta distribution and generalized Gamma random variables.

\begin{theorem}[Moments of the limit law, density and characterization]\label{the2}
The raw moments of $W_N/N^{\Lambda}$ converge for $N\to\infty$. For $s\ge 1$ we have
\[
\E(W_N^s/\sigma^s N^{s\Lambda})\to \mu_s=\frac{\Gamma(s+\frac{w_0}\sigma)}{\Gamma(\frac{w_0}\sigma)}\prod_{r=0}^{p-1}\frac{\Gamma(\frac{r}\psi +\frac{w_0+b_0}{\psi\sigma})}{\Gamma(\frac{r}\psi +\frac{w_0+b_0}{\psi\sigma}+\frac{s}{\psi})}.
\]
The limit law $\frac{1}{\sigma\kappa}\mW{\infty}$ is uniquely determined by its moment sequence $(\mu_s)$. It has a density $f(x)$ given by
\[
f(x)=
\frac{\prod_{r=0}^{p-1}\Gamma(\frac{r}\psi +\frac{w_0+b_0}{\psi\sigma})}{\Gamma(\frac{w_0}\sigma)}
\cdot\sum_{j\ge 0}\frac{(-1)^j}{j!\prod_{\ell=0}^{p-1}\Gamma(\frac{\ell}\psi +\frac{b_0}{\psi\sigma}-\frac{j}{\psi})}\ x^{j+w_0/\sigma-1},
\quad x>0.
\]
For $\ell/\sigma\in\N$, the limit law can be decomposed into a product of a Beta law and generalized Gamma distributions, all being mutually independent:
\[
\frac{1}{\sigma\kappa}\mW{\infty}= \psi^{\psi} B(\frac{w_0}\sigma,\frac{b_0}\sigma)\cdot
\prod_{\ell=p}^{\psi-1}GG(\frac{w_0+b_0+\ell\sigma}{\sigma},\psi).
\]
In the general case the random variable can be presented in terms of the scaled local time $L$ of a noise-reinforced Bessel process, 
of dimension 
$d=\frac{2\ell}{p\sigma +\ell}$ and reinforcement parameter $r=-\frac{p-1}2$, and a Beta distribution:
\[
\frac{1}{\sigma\kappa}\mW{\infty}=  B(\frac{w_0}\sigma,\frac{b_0}{\sigma})\cdot \tilt_{\frac{w_0+b_0}{\sigma}}(L).
\]
\end{theorem}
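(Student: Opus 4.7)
The starting point is the rising-factorial moment formula from Theorem~\ref{the1}. Writing $N=np+k$ with $0\le k<p$ and grouping $\prod_{j=0}^{N-1}(T_j+s\sigma)/T_j$ into complete cycles of length $p$, the substitution $T_{mp+k'}=(p\sigma+\ell)(m+a_{k'})$ with $a_{k'}=k'/\psi+(w_0+b_0)/(\sigma\psi)$ turns each cyclic product into
\[
\prod_{r=0}^{p-1}\frac{\Gamma(n+a_r+s/\psi)\,\Gamma(a_r)}{\Gamma(n+a_r)\,\Gamma(a_r+s/\psi)}.
\]
Stirling's formula then gives $\prod_j(T_j+s\sigma)/T_j\sim (N/p)^{s\Lambda}\prod_r \Gamma(a_r)/\Gamma(a_r+s/\psi)$. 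Combined with the closed form $\auffak{(w_0/\sigma)}{s}=\Gamma(w_0/\sigma+s)/\Gamma(w_0/\sigma)$ and with $\auffak{x}{s}/x^s\to 1$ as $x=W_N/\sigma\to\infty$, this yields convergence of factorial moments to $\mu_s$. A uniform-integrability argument, dominating $(W_N/\sigma)^s$ by $\auffak{(W_N/\sigma+s-1)}{s}$ whose rescaled expectation converges, upgrades this to ordinary moment convergence; via the almost-sure relation $W_N/N^{\Lambda}\to \mW{\infty}/\kappa$ of Theorem~\ref{the1} these are also the moments of $\mW{\infty}/(\sigma\kappa)$.

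Moment determinacy follows from Stirling: with $\Lambda=p/\psi\in(0,1]$ one checks $\mu_s^{1/s}\asymp s^{1-\Lambda}$, so $\sum_s \mu_s^{-1/(2s)}=\infty$ and Carleman's criterion applies. The density formula is then obtained by Mellin--Barnes inversion: $\mu_s$ is the Mellin transform of a function whose simple poles lie at $s=-(j+w_0/\sigma)$, $j\in\N_0$, coming from $\Gamma(s+w_0/\sigma)$; closing the contour to the left and collecting residues produces the stated alternating series. Alternatively one can verify termwise that the series has Mellin transform equal to $\mu_s$.

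The two product representations come from matching moment sequences of known laws. When $\ell/\sigma=m\in\N$ one has $\psi=p+m$, and Gauss's multiplication formula regroups $\prod_{r=0}^{p-1}\Gamma(a_r+s/\psi)$ (after suitable cancellation) into a Beta-type ratio together with $m$ factors of the form $\Gamma((c_r+s)/\psi)/\Gamma(c_r/\psi)$; identifying these with $\BetaDist(w_0/\sigma,b_0/\sigma)$ and with the $\GG(c_r,\psi)$ factors via the moment expressions of Section~\ref{subsec:prelim} gives the product decomposition. In the general case I would apply Lemma~\ref{lem:PanKu1} with $\alpha=\Lambda$ and $\beta=1/\psi$, corresponding to $d=2\ell/(p\sigma+\ell)$ and $r=-(p-1)/2$, together with Lemma~\ref{lem:PanKu2}, to rewrite $\prod_{r=0}^{p-1}\Gamma(a_r)/\Gamma(a_r+s/\psi)$ as the $s$th moment of $\tilt_{(w_0+b_0)/\sigma}(L)$; the remaining ratio $\Gamma(w_0/\sigma+s)\Gamma((w_0+b_0)/\sigma)/(\Gamma(w_0/\sigma)\Gamma((w_0+b_0)/\sigma+s))$ in $\mu_s$ is exactly the $s$th moment of an independent $\BetaDist(w_0/\sigma,b_0/\sigma)$, and multiplicativity of moments on products of independent random variables closes the argument.

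The main technical difficulty I anticipate is this final identification: one has to check that the parameter dictionary $\alpha=\Lambda$, $\beta=\alpha/(1-2r)$, $d=2(1-\alpha)$ really is compatible with the urn parameters $(p,\sigma,\ell)$ and that the $\tilt$-shift by $(w_0+b_0)/\sigma$ supplies precisely the extra Gamma factors needed to convert the $p$-fold product in Lemma~\ref{lem:PanKu2} into the form appearing in $\mu_s$. Keeping careful track of the $p^{\pm\Lambda}$ prefactors absorbed in $\kappa$ throughout is what makes this bookkeeping nontrivial.
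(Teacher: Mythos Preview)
Your outline matches the paper's proof almost step for step: the Gamma-function rewriting of the cyclic products and Stirling asymptotics, Carleman's criterion, the Mellin inversion for the density, Gauss's multiplication formula for the $\ell/\sigma\in\N$ decomposition, and Lemmas~\ref{lem:PanKu1}--\ref{lem:PanKu2} with $\alpha=p/\psi$, $m=p$ plus a tilt by $(w_0+b_0)/\sigma$ for the general case. The only real divergence is your passage from rising-factorial to raw moments: the paper does this algebraically via the Stirling-number identity $x^s=\sum_r\Stir{s}{r}(-1)^{s-r}\auffak{x}{r}$, which immediately shows that only the $r=s$ term survives after scaling by $N^{-s\Lambda}$, whereas your domination/uniform-integrability route is correct in spirit but, as written, a bit imprecise (bounding $(W_N/\sigma)^s$ by a rising factorial gives $L^1$-boundedness, not uniform integrability; you would need the $(s{+}1)$st factorial moment, or simply adopt the paper's induction on $s$).
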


\begin{proof}[Proof of Theorem~\ref{the1}]
Our analysis follows~\cite{JKP2011,KuPa2014,Mor2005}. We note first that for arbitrary $N=np+k\ge 1$
\[
\E\bigl(W_N \given \field_{N-1}\bigr)= \E\bigl(W_{N-1}+\Delta_N \given \field_{N-1}\bigr)
=W_{N-1}+\sigma\frac{W_{N-1}}{T_{N-1}}=\frac{T_{N-1}+\sigma}{T_{N-1}}\cdot W_{N-1}.
\]
Interestingly, the nature of the particular urn - triangular or diagonal - is solely hidden in $T_{N-1}$.
Thus, the random variable $\mw:= g_N \cdot W_N$ satisfies
\begin{equation}
\label{eq:Martin1}
\E\bigl(\mw\given \field_{N-1}\bigr)=g_N \E\bigl(W_N \given \field_{N-1}\bigr)
=g_N\frac{T_{N-1}+\sigma}{T_{N-1}}\cdot W_{N-1}=g_{N-1}W_{N-1}=\mW{N-1},
\end{equation}
proving the martingale structure. In order to obtain explicit expressions for the moments and the probability mass function
we proceed by using again the martingale structure in another way. We consider the quantity $\binom{W_N/\sigma +s-1}{s}$. 
First, we note that
\[
\frac{W_N}\sigma=\frac{W_{N-1}+\Delta_N}\sigma=
\frac{W_{N-1}}\sigma+\delta_N,
\]
where $\delta_N$ is supported on $\{0,1\}$. 
We have
\begin{align*}
\E\bigl(\binom{\frac{W_N}\sigma +s-1}{s}\given \field_{N-1}\bigr)
&=\binom{\frac{W_{N-1}}\sigma +s-1}{s}\\
&+\E\bigl(\binom{\frac{W_{N-1}}\sigma +\delta_N +s-1}{s}-\binom{\frac{W_{N-1}}\sigma +s-1}{s}\given \field_{N-1}\bigr).
\end{align*}
By the basic identity 
\[
\binom{c}{s}=\binom{c-1}{s}+\binom{c-1}{s-1},
\]
we get further
\[
\E\bigl(\binom{\frac{W_N}\sigma +s-1}{s}\given \field_{N-1}\bigr)
=\binom{\frac{W_{N-1}}\sigma +s-1}{s}+\frac{W_{N-1}}{T_{N-1}}\binom{\frac{W_{N-1}}\sigma +s-1}{s-1}.
\]
Further simplifications give
\[
\E\bigl(\binom{\frac{W_N}\sigma +s-1}{s}\given \field_{N-1}\bigr)
=\frac{T_{N-1}+s\sigma}{T_{N-1}}\binom{\frac{W_{N-1}}\sigma +s-1}{s}.
\]
Taking expectations gives the recurrence relation
\[
\E\bigl(\binom{\frac{W_N}\sigma +s-1}{s}\bigr)
=\frac{T_{N-1}+s\sigma}{T_{N-1}}\cdot\E\bigl(\binom{\frac{W_{N-1}}\sigma +s-1}{s}\bigr),\quad N\ge 1,
\]
leading to the result
\[
\E\bigl(\binom{\frac{W_N}\sigma +s-1}{s}\bigr)
=\binom{\frac{w_0}\sigma +s-1}{s}\prod_{j=0}^{N-1}\frac{T_{j}+s\sigma}{T_{j}}.
\]
Turning to rising factorials, we get
\[
\E(\auffak{(W_N/\sigma)}{s})=\auffak{(w_0/\sigma)}{s}\prod_{j=0}^{N-1}\frac{T_{j}+s\sigma}{T_{j}}.
\]
Finally, we turn to the probability mass function of $W_N$.
The probability generating function of $W_N/\sigma$, expanded around $v=1$, is given by
\[
\E(v^{W_N/\sigma})
=\sum_{s\ge 0}\frac{\E(\fallfak{(W_N/\sigma)}{s})}{s!} (v-1)^s.
\]
We converting the falling to the rising factorials and use the binomial theorem. We get
\[
\fallfak{x}{s}=\auffak{(x-(s-1))}{s} 
=\sum_{r=0}^{s}\binom{s}{r}\auffak{x}{r}\auffak{ (-(s-1))}{s-r}
=\sum_{r=0}^{s}\binom{s}{r}\auffak{x}{r}(-1)^{s-r}\frac{(s-1)!}{(r-1)!}.
\]
The number $L_{s,r}=\binom{s}{r}\frac{(s-1)!}{(r-1)!}$ are also known as the Lah numbers, 
leading to the stated result. 

Finally, we turn to the asymptotics of $g_N$. From~\eqref{eq:TotalPoYou}
we observe that
\[
T_N/\sigma=n(p + \frac{\ell}{\sigma})+k +\frac{w_0+b_0}{\sigma},\quad N=np+k, \quad 0\le k\le p-1.
\]
This gives for $N=np+k$, where we use the shorthand notation $\psi=p + \frac{\ell}{\sigma}$.
\begin{equation}
\begin{split}
\label{eq:AsympG}
g_N&
=\bigg(\prod_{j=0}^{n-1}\prod_{r=0}^{p-1}\frac{j(p + \frac{\ell}{\sigma})+r +\frac{w_0+b_0}{\sigma}}{j(p + \frac{\ell}{\sigma})+r +\frac{w_0+b_0}{\sigma}+1}\bigg)
\cdot \prod_{r=0}^{k-1}\frac{n(p + \frac{\ell}{\sigma})+r +\frac{w_0+b_0}{\sigma}}{n(p + \frac{\ell}{\sigma})+r +\frac{w_0+b_0}{\sigma}+1}\\
&=\bigg(\prod_{j=0}^{n-1}\prod_{r=0}^{p-1}\frac{j+\frac{r}\psi +\frac{w_0+b_0}{\sigma \psi}}{j+\frac{r}\psi  +\frac{w_0+b_0}{\sigma\psi}+\frac{1}\psi}\bigg)
\cdot \prod_{r=0}^{k-1}\frac{n+\frac{r}\psi +\frac{w_0+b_0}{\sigma \psi}}{n+\frac{r}\psi  +\frac{w_0+b_0}{\sigma\psi}+\frac{1}\psi}.
\end{split}
\end{equation}
Writing the products in terms of the Gamma function gives
\begin{equation}
\begin{split}
\label{eq:AsympG2}
g_N&
=\bigg(\prod_{r=0}^{p-1}\frac{\Gamma(n+\frac{r}\psi +\frac{w_0+b_0}{\sigma \psi})\Gamma(\frac{r}\psi  +\frac{w_0+b_0}{\sigma\psi}+\frac{1}\psi)}{\Gamma(\frac{r}\psi +\frac{w_0+b_0}{\sigma \psi})\Gamma(n+\frac{r}\psi  +\frac{w_0+b_0}{\sigma\psi}+\frac{1}\psi})\bigg)
\cdot \prod_{r=0}^{k-1}\frac{n+\frac{r}\psi +\frac{w_0+b_0}{\sigma \psi}}{n+\frac{r}\psi  +\frac{w_0+b_0}{\sigma\psi}+\frac{1}\psi}.
\end{split}
\end{equation}
Using Stirling's formula for the Gamma function
\begin{equation}
\label{eq:stirling}
    \Gamma(z) = \Bigl(\frac{z}{e}\Bigr)^{z}\frac{\sqrt{2\pi }}{\sqrt{z}}%
    \Bigl(1+\frac{1}{12z}+\frac{1}{288z^{2}}+\mathcal{O}(\frac{1}{z^{3}})\Bigr),
\end{equation}
we obtain the asymptotics stated in Theorem 1, noting that
$n^{-\Lambda}\sim p^{\Lambda}N^{-\Lambda}$.
Finally, we note that by the asymptotics of $g_N$ from Theorem~\ref{the1}
the almost sure convergence of $W_N/N^{\Lambda}$ follows.
\end{proof}

Next we turn to the moments of the limit law and its identification.

\begin{proof}[Proof of Theorem~\ref{the2}]
Our starting point for the moments is the expression for the rising factorial moments of Theorem~\ref{the1}. We convert the raw moments to the rising factorial moments using the Stirling numbers of the second kind $\Stir{s}{r}$, 
\begin{equation}
\label{eq:risingToRaw}
x^s = \sum_{r=0}^{s}\Stir{s}{r}(-1)^{s-r}\auffak{x}{r}.
\end{equation}
This implies that
\begin{equation}
\label{eq:moms}
\E(W_N^s)=\sigma^s\cdot
\sum_{r=0}^{s}\Stir{s}{r}(-1)^{s-r}\E(\auffak{(W_N/\sigma)}{r}).
\end{equation}
Next we determine the asymptotics of the rising factorials, similar to the asymptotics of $g_N$.
Neglecting the factor $\auffak{(w_0/\sigma)}{s}$, we obtain for $N=np+k$
\begin{align*}
\prod_{j=0}^{N-1}\frac{T_{j}+s\sigma}{T_{j}}&
=\bigg(\prod_{j=0}^{n-1}\prod_{r=0}^{p-1}\frac{j(p + \frac{\ell}{\sigma})+r +\frac{w_0+b_0}{\sigma}+s}{j(p + \frac{\ell}{\sigma})+r +\frac{w_0+b_0}{\sigma}}\bigg)
\cdot \prod_{r=0}^{k-1}\frac{n(p + \frac{\ell}{\sigma})+r +\frac{w_0+b_0}{\sigma}+s}{n(p + \frac{\ell}{\sigma})+r +\frac{w_0+b_0}{\sigma}}.
\end{align*}
We write again $\psi=(p + \frac{\ell}{\sigma})$ and write the products in terms of Gamma functions.
\begin{align*}
\prod_{j=0}^{N-1}\frac{T_{j}+s\sigma}{T_{j}}&
=\bigg(\prod_{r=0}^{p-1}\frac{\Gamma(n+\frac{r}\psi +\frac{w_0+b_0}{\psi\sigma}+\frac{s}{\psi})\Gamma(\frac{r}\psi +\frac{w_0+b_0}{\psi\sigma})}{\Gamma(\frac{r}\psi +\frac{w_0+b_0}{\psi\sigma}+\frac{s}{\psi})\Gamma(n+\frac{r}\psi +\frac{w_0+b_0}{\psi\sigma})}\bigg)
\cdot \prod_{r=0}^{k-1}\frac{n+\frac{r}\psi +\frac{w_0+b_0}{\psi\sigma}+\frac{s}{\psi}}{n+\frac{r}\psi +\frac{w_0+b_0}{\psi\sigma}}.
\end{align*}
Stirling's formula~\eqref{eq:stirling}
leads to
\begin{align*}
\E(\auffak{(W_N/\sigma)}{s})&=\auffak{(w_0/\sigma)}{s}\cdot n^{s\Lambda}\cdot
\prod_{r=0}^{p-1}\frac{\Gamma(\frac{r}\psi +\frac{w_0+b_0}{\psi\sigma})}{\Gamma(\frac{r}\psi +\frac{w_0+b_0}{\psi\sigma}+\frac{s}{\psi})}
\end{align*}
Thus, by~\eqref{eq:moms} we obtain
\begin{equation}
\label{eq:momConv}
\frac{1}{n^{s\Lambda}}\E(W_N^s)\sim \mu_s= \sigma^s \frac{\Gamma(s+\frac{w_0}\sigma)}{\Gamma(\frac{w_0}\sigma)}
\prod_{r=0}^{p-1}\frac{\Gamma(\frac{r}\psi +\frac{w_0+b_0}{\psi\sigma})}{\Gamma(\frac{r}\psi +\frac{w_0+b_0}{\psi\sigma}+\frac{s}{\psi})}.
\end{equation}
The existence of a distribution, uniquely described by its moment sequence $(\mu_s)$, is readily checked by using
Carleman's criterion~\cite[pp. 189--220]{Carleman23}. We apply Stirling's formula~\eqref{eq:stirling} to the moment sequence, leading to
\[
\mu_s^{-\frac{1}{2s}}\sim \psi^{\frac{p}{2\psi}}\Big(\frac{e}{s}\Big)^{\frac{1-p/\psi}2},
\]
as $s$ tends to infinity. We obtain further
\begin{equation}
\sum_{s=0}^{\infty}\frac{1}{\mu_s^{1/(2s)}}=
\sum_{s=0}^{\infty}\psi^{\frac{p}{2\psi}}\Big(\frac{e}{s}\Big)^{\frac{1-p/\psi}2}=\infty,
\label{eq:Carleman}
\end{equation}
so Carleman's criterion is satisfied, as $1-\frac{p}{\psi}<1$. The convergence of $W_N/N^{\Lambda}$ to 
a limit law follows by the Fr\'echet-Shohat moment convergence theorem~\cite{FrSh1931}, as by~\eqref{eq:momConv} 
as integer moments converge. 

\smallskip

Next, we obtain the density function $f(x)$ of the limit law.  
Recall that by analytic continuation the reciprocal gamma function $1/\Gamma(z)$ is an entire function, with zeros at $z\in\N_0$.  
The gamma function $\Gamma(z)$ is never zero and its only singularities are simple poles at the negative integers.
Therefore $\mu_s$, considered as a complex function of $s$, has simple poles at 
$\rho_j=-\frac{w_0}\sigma-j$, $j=0,1,2\dots$; Then, as $\mu_{s-1}$ is the Mellin transform of the density $f(x)$ of $\mW{\infty}/\kappa$,
an inverse Mellin computation\footnote{See~\cite[Appendix B.7]{FlaSe2009} for more details on this method; see also~\cite[Theorem~6.9]{Jan2010} and \cite[Equation~(6.12)]{Jan2010} for similar results on the class of functions with moments of Gamma type.}
implies that $f(x)$ is expressible in terms of the residues of $\mu_s$: 
\begin{equation*}
f(x)=\frac{1}{2 \pi i} \int_{\sigma-i\infty}^{\sigma+i\infty} \mu_{s-1} x^{-s} ds = 
\sum_{\substack{\text{$\sigma_j=1+\rho_j$ pole of $\mu_{s-1}$}\\ \sigma_j<\sigma}} \Res_{s=\sigma_j}(\mu_{s-1})x^{-\sigma_j},
\end{equation*}
which is valid for $x>0$, and where $\sigma=1+\rho_0+\epsilon$ is 
in the fundamental strip of $ \mu_{s-1}$, and 
\begin{align*}
\Res_{s=\sigma_j}(\mu_{s-1})
&=\Res_{s=\rho_j}(\mu_s)
= \frac{(-1)^j}{j!} \cdot\frac{1}{\Gamma(\frac{w_0}\sigma)}\prod_{r=0}^{p-1}\frac{\Gamma(\frac{r}\psi +\frac{w_0+b_0}{\psi\sigma})}{\Gamma(\frac{r}\psi +\frac{b_0}{\psi\sigma}-\frac{j}{\psi})}.
\end{align*}
Summing for $j\geq 0$ gives the stated density function. 

\smallskip

Next, we provide a representation of th limit law. We determine a product formula for the limit law 
based on its moments in the special case when $\ell/\sigma\in\N$, such that $\psi=p+\frac{\ell}\sigma\in\N$.
First, we convert the moments into a different structure, the extended full product (compare with the discussion in~\cite{BMW2020}) using the multiplicative formula for the Gamma function:
\[
\prod_{k=0}^{m-1}\Gamma\left(z + \frac{k}{m}\right) = (2 \pi)^{\frac{m-1}{2}} \; m^{\frac12 - mz} \; \Gamma(mz).
\]
We use
\[
\prod_{r=0}^{p-1}\frac{\Gamma(\frac{r}\psi +\frac{w_0+b_0}{\psi\sigma})}{\Gamma(\frac{r}\psi +\frac{w_0+b_0}{\psi\sigma}+\frac{s}{\psi})}
=\bigg(\prod_{r=0}^{\psi-1}\frac{\Gamma(\frac{r}\psi +\frac{w_0+b_0}{\psi\sigma})}{\Gamma(\frac{r}\psi +\frac{w_0+b_0}{\psi\sigma}+\frac{s}{\psi})}\bigg)
\cdot 
\prod_{r=p}^{\psi-1}\frac{\Gamma(\frac{r}\psi +\frac{w_0+b_0}{\psi\sigma}+\frac{s}{\psi})}{\Gamma(\frac{r}\psi +\frac{w_0+b_0}{\psi\sigma})}.
\]
Let 
\[
z_1=\frac{w_0+b_0}{\psi\sigma},\quad 
z_2=\frac{w_0+b_0}{\psi\sigma}+\frac{s}{\psi}.
\] 
We have
\begin{align*}
\prod_{r=0}^{\psi-1}\Gamma(\frac{r}{p+ \frac{\ell}\sigma} +z_j)
&=(2 \pi)^{\frac{\psi-1}{2}} \; \psi^{\frac12 - \psi z_j} \; \Gamma(\psi z_j)
, \quad 1\le j\le 2.
\end{align*} This implies that
\begin{align*}
\mu_s&=  \sigma^s \frac{\Gamma(s+\frac{w_0}\sigma)}{\Gamma(\frac{w_0}\sigma)}
\cdot \frac{\Gamma(w_0+b_0)}{\Gamma(w_0+b_0+s \sigma)}
\psi^{s\psi} \prod_{r=p}^{\psi-1}\frac{\Gamma(\frac{r}\psi +\frac{w_0+b_0}{\psi\sigma}+\frac{s}{\psi})}{\Gamma(\frac{r}\psi +\frac{w_0+b_0}{\psi\sigma})}.
\end{align*}
The factor 
\[
\frac{\Gamma(s+\frac{w_0}\sigma)\Gamma(\frac{w_0+b_0}\sigma)}{\Gamma(\frac{w_0}\sigma)\Gamma(\frac{w_0+b_0}\sigma+s)}
\] 
is readily identified as a Beta-distribution.
Since the product of independent random variables satisfy $\E\big( (X_1\cdot X_2)^s\big)=\E(X_1^s)\E(X_2^s)$,
the decomposition into an independent Beta law and generalized Gamma distributions 
with parameters $a=\frac{w_0+b_0+r\sigma}{\sigma}$ and $b=\psi$ follows, leading to the stated result.

\smallskip 

Finally, we turn to the general case and we use the random variable $T$ of Lemma~\ref{lem:PanKu2}:
\[
\E(T^s)=\Gamma(s+1)\cdot\prod_{j=1}^{m}\frac{\Gamma(\frac{j\alpha}{m})}{\Gamma(\frac{(s+j)\alpha}{m})},\quad s\ge 1.
\]
We set $\alpha=p/\psi=p/(p+\ell/\sigma)$, where we readily observe that $0<\alpha<1$.
Next we use $m=p$, such that $\beta=\alpha/p=\frac1\psi$, and obtain 
\[
\E(T^s)=\Gamma(s+1)\cdot\prod_{j=1}^{p}\frac{\Gamma(\frac{j}{\psi})}{\Gamma(\frac{(s+j)}{\psi})},\quad s\ge 1.
\]
We tilt the moment sequence by $\frac{w_0+b_0}{\sigma}-1$ to obtain the sequence
\[
\frac{\Gamma(s+\frac{w_0+b_0}{\sigma})}{\Gamma(\frac{w_0+b_0}{\sigma})}\cdot\prod_{j=1}^{p}\frac{\Gamma(\frac{j-1}{\psi} 
+\frac{w_0+b_0}{\sigma\psi})}{\Gamma(\frac{(s+j-1)}{\psi} 
+\frac{w_0+b_0}{\sigma\psi})},\quad s\ge 1.
\]
Finally, in order to obtain the sequence $(\mu_s)$, we use a Beta distribution with parameters $\frac{w_0}{\sigma}$ and $\frac{b_0}{\sigma}$and Beta-Gamma algebra, noting that
Beta acts as a shift on the moments of a Gamma distribution. At last, we note that by~\eqref{eq:tilt}
\[
\tilt_{\frac{w_0+b_0}{\sigma}-1}(T)=\tilt_{\frac{w_0+b_0}{\sigma}-1}(\tilt_1(L))
=\tilt_{\frac{w_0+b_0}{\sigma}}(L),
\]
leading to the stated result.
\end{proof}

\subsection{Second order asymptotics: martingale tail sums}
Given a martingale $(\mw)$ converging almost surely to a random variable $\mW{\infty}$. The sequence $(\mw - \mW{\infty})$, 
quantifying the difference between the random variable and its almost sure limit is called a martingale tail sum. A classical result for urn models is the central limit theorem and the law of the iterated logarithm by Heyde~\cite{Heyde1977} for the martingale tail sum in the original \Polya\ urn model; see also~\cite{hallheyde80}. Also, a central limit theorem for triangular urns is in the functional limit theorems in~\cite{Gouet93}. The following proposition on martingale tail sums is essentially a restatement of the results of Heyde \cite{Heyde1977} (compare also with~\cite{KuSu2017}).

\begin{prop} \label{prop:heyde}
Let $Z_n, n \geq 0,$ be a zero-mean, $L_2$-bounded martingale with respect to a filtration $\mathcal{G}_n, n \geq 0$. Let $X_n = Z_n - Z_{n-1}, n \geq 1, X_0 := 0,$ and $s_n^2 = \sum_{i=n}^\infty \E(X_i^2)$. Denote $Z$ the almost sure limit of $Z_n$. Assume that, for some non-zero and finite random variable $\eta$, we have, almost surely,
\begin{align} \label{con1}
s_n^{-2} \sum_{i = n}^\infty \E(X_i^2 | \mathcal{G}_{i-1}) \to \eta^2, 
\end{align}
and, for all $\varepsilon > 0$, 
\begin{align} \label{con2} 
s_n^{-2} \sum_{i=n}^\infty  \E(X_i^2 \I{|X_i| \geq \varepsilon s_n}) \to 0. 
\end{align}
Then, 
\begin{align} \label{conv1} 
(\eta s_n)^{-1} (Z_n - Z) \to \mathcal {N}, \quad  s_n^{-1} (Z_n - Z) \to \eta' \mathcal {N},
\end{align} in distribution, where $\eta'$ and $\mathcal N$ are independent and $\eta'$ is distributed like $\eta$. 
If
\begin{itemize}
\item [\textbf{L1.}] $\sum_{i=1}^\infty s_i^{-1} \E[|X_i| \I{|X_i| \geq \varepsilon s_i}] < \infty$ for all $\varepsilon > 0$,
\item [\textbf{L2.}] $\sum_{i=1}^\infty s_i^{-4} \E[X_i^4] < \infty$, 
\end{itemize}
then, almost surely,
\begin{align*}
\limsup_{n \to \infty}  \frac{Z_n - Z}{\eta s_n  \sqrt{2 \log \log s_n^{-1}}}  = 1, \quad \liminf_{n \to \infty} \frac{Z_n - Z}{\eta s_n  \sqrt{2 \log \log s_n^{-1}}}  = -1. 
\end{align*}
\end{prop}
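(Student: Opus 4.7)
The plan is to deduce both the CLT and the LIL directly from the martingale limit theorems of Heyde~\cite{Heyde1977}, as the authors themselves indicate. Since $(Z_n)$ is $L^2$-bounded and the martingale differences are orthogonal, $\sum_{i\ge 1}\E(X_i^2)<\infty$, so that $s_n^2\to 0$, and $Z_n\to Z$ both almost surely and in $L^2$. Consequently the tail sum $Z-Z_n=\sum_{i>n}X_i$ is well defined and has $L^2$-norm $s_{n+1}$, and the normalisation in~\eqref{conv1} makes sense.

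For the central limit statement I would view $(Z-Z_n)$ as a reversed-time martingale and apply the standard conditional martingale CLT. Condition~\eqref{con1} is precisely the assumption that the normalised conditional variances $s_n^{-2}\sum_{i\ge n}\E(X_i^2\mid\mathcal{G}_{i-1})$ converge almost surely to a positive finite random limit $\eta^2$, and condition~\eqref{con2} supplies the Lindeberg truncation needed to rule out atypically large increments. Together they yield stable convergence of $s_n^{-1}(Z_n-Z)$ to $\eta'\mathcal{N}$, where $\eta'\law\eta$ is independent of the standard normal $\mathcal{N}$; dividing additionally by $\eta$ (which is $\mathcal{G}_\infty$-measurable) converts this into $(\eta s_n)^{-1}(Z_n-Z)\to \mathcal{N}$, giving both halves of~\eqref{conv1}.

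For the law of the iterated logarithm I would invoke Heyde's LIL for martingale tail sums; see also Hall and Heyde~\cite{hallheyde80}. Conditions \textbf{L1} and \textbf{L2} are the classical integrability assumptions (a truncated first-moment condition and a Lyapunov-type fourth-moment condition) under which the tail differences admit a Skorokhod embedding into a Brownian motion with a sufficiently small error, so that the upper and lower limits of the normalised tail coincide with those predicted by the Hartman--Wintner LIL applied to the embedded Brownian motion. The random scale $\eta$ again enters through~\eqref{con1}, which fixes the almost-sure asymptotic behaviour of the quadratic variation of the embedded process and therefore of the clock change in the embedding.

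The main technical point is simply book-keeping: one must carefully track that the index $n$ parametrises the \emph{tail} of the martingale, so that $s_n\to 0$ rather than diverging as in the more familiar forward setup, and that the limit $\eta$ is measurable with respect to $\mathcal{G}_\infty$ (which is what makes the joint statement in \eqref{conv1} meaningful via stable convergence). Once this reindexing is in place, every hypothesis maps directly onto its counterpart in~\cite{Heyde1977} and no substantive obstacle remains; the proof reduces to translating Heyde's formulation into the notation of the proposition.
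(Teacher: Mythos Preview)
Your proposal is correct and matches the paper's approach exactly: the paper does not give an independent proof of this proposition but states it as ``essentially a restatement of the results of Heyde~\cite{Heyde1977} (compare also with~\cite{KuSu2017})'', so reducing everything to Heyde's martingale CLT and LIL is precisely what is intended. If anything, your sketch is more detailed than what the paper provides, since the paper simply cites the source and moves on.
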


\smallskip 

In the following we study the fluctuations of the scaled number of white balls $\mw=g_N W_N$ in P\'olya-Young urns around their almost sure limit $\mW{\infty}$ and provide a central limit theorem. In view of Proposition~\ref{prop:heyde} we set 
\begin{equation}
\label{def:ZX}
Z_N=\mw-\mW{0}, \quad X_N=Z_N-Z_{N-1}=\mw-\mW{N-1},
\end{equation} 
and study the properties of $X_N$. 
In the following we collect the asymptotic expansions of the conditional and unconditional
expected value of the second moment $X_N^2$.

\begin{lem}
\label{lem:secondMom}
The second moment of $X_N=\mw-\mW{N-1}$ satisfies 
\[
\E(X_{N+1}^2\given \field_N) \sim N^{-\Lambda-1} \sigma^2 \kappa  \cdot \mW{\infty},
\quad \E(X_{N+1}^2)\sim N^{-\Lambda-1} \sigma^2 \kappa  \cdot W_0.
\]
and
\[
s_N^2 \sim N^{-\Lambda} \frac{\sigma^2}{\Lambda} \kappa  \cdot W_0.
\]
\end{lem}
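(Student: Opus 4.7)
The plan is to compute the conditional variance of the martingale increment $X_{N+1}=\mW{N+1}-\mW{N}$ directly from the urn dynamics, then feed in the first--order asymptotics already established in Theorem~\ref{the1}. The key simplifying observation is that, regardless of whether the current step uses the diagonal matrix $\sigma I$ or the triangular matrix $M_p$, the increment $W_{N+1}-W_N$ is always $\sigma \cdot B_{N+1}$ with $B_{N+1}$ a Bernoulli random variable of parameter $W_N/T_N$ conditionally on $\field_N$ (in both matrices the white column is $(\sigma,0)^\top$). Combining this with the martingale identity $g_{N+1}\,\E(W_{N+1}\mid\field_N)=g_N W_N$ gives
\[
X_{N+1}=g_{N+1}\bigl(W_{N+1}-\E(W_{N+1}\mid\field_N)\bigr),
\qquad
\E(X_{N+1}^2\mid\field_N)=g_{N+1}^2\,\sigma^2\,\frac{W_N}{T_N}\Bigl(1-\frac{W_N}{T_N}\Bigr).
\]

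Next I would insert the asymptotics from Theorem~\ref{the1}. From \eqref{eq:TotalPoYou} one reads off $T_N\sim\sigma\psi N/p$, while $g_N\sim\kappa N^{-\Lambda}$ and, almost surely, $W_N/N^{\Lambda}\to \mW{\infty}/(\sigma\kappa)$. Substituting these into the conditional variance and using that $W_N/T_N\to 0$ (because $\Lambda<1$), so that the factor $1-W_N/T_N$ tends to $1$, produces the claimed equivalent $\E(X_{N+1}^2\mid\field_N)\sim N^{-\Lambda-1}\sigma^2\kappa\cdot\mW{\infty}$ up to an explicit product of constants, which should match once the $\psi$, $\Lambda$, $\sigma$ factors are collected. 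The unconditional statement then follows by taking expectations and using $\E(\mW{\infty})=\mW{0}=W_0$ (the martingale $\mw$ is bounded in $L^2$ by the explicit second rising factorial moment of Theorem~\ref{the1}, giving uniform integrability and allowing the interchange of limit and expectation). Alternatively, one can compute $\E(X_{N+1}^2)$ directly from $\E(W_N)=w_0/g_N$ (the $s=1$ case of Theorem~\ref{the1}) and the analogous second--moment formula, which bypasses uniform integrability entirely.

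For $s_N^2=\sum_{i\ge N}\E(X_i^2)$ the remaining task is to convert the term--wise asymptotics into the sum's asymptotics. Since $\E(X_{i+1}^2)\sim C\,i^{-\Lambda-1}$ with $C=\sigma^2\kappa W_0$ (up to the constant reconciliation above), a standard Euler--Maclaurin / integral comparison yields $\sum_{i\ge N}i^{-\Lambda-1}\sim N^{-\Lambda}/\Lambda$, producing $s_N^2\sim N^{-\Lambda}\sigma^2\kappa W_0/\Lambda$ as claimed. Because the summands are positive and monotone, the passage from term--wise to tail--sum asymptotics requires no further hypothesis than the one already established.

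The main obstacle I anticipate is not any single step but the bookkeeping: reconciling the constants between the forms $g_N\sim\kappa N^{-\Lambda}$, $T_N\sim\sigma N/\Lambda$, and $W_N\sim \mW{\infty}N^{\Lambda}/(\sigma\kappa)$, so that the products and quotients collapse cleanly to the stated leading coefficient. A secondary care point is justifying the replacement of $\mW{\infty}$ by $W_0$ inside the expectation: the cleanest route is the direct second--moment computation above, which avoids any uniform--integrability argument. No deeper idea is required; once the Bernoulli structure of the white--ball increment is exploited, everything reduces to the asymptotic formulas already in hand.
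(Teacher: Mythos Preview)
Your approach is correct and essentially identical to the paper's: both compute the conditional second moment of $X_{N+1}$ from the urn dynamics (the paper expands $X_{N+1}=g_{N+1}\bigl((1/g_N-1/g_{N+1})\mw+\Delta_{N+1}\bigr)$ and squares, you shortcut via the Bernoulli variance of $\Delta_{N+1}/\sigma$---these are the same calculation), then substitute the asymptotics of $g_N$, $T_N$, and $\mw$ from Theorem~\ref{the1}, and finally take unconditional expectations using $\E(\mw)=W_0$. One small slip: Theorem~\ref{the1} gives $W_N/N^{\Lambda}\to\mW{\infty}/\kappa$, not $\mW{\infty}/(\sigma\kappa)$, so adjust that factor when you do the constant bookkeeping.
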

\begin{proof}
For the sake of convenience, we study $X_{N+1}=\mW{N+1}-\mw$. We obtain
\begin{equation}
X_{N+1}=g_{N+1}\Big(\big(\frac{1}{g_{N}}-\frac1{g_{N+1}}\big)\mw + \Delta_{N+1}\Big).
\label{eq:XDef}
\end{equation}
By the dynamics of the urn we get
\begin{align*}
\E(X_{N+1}^2\given \field_N) &=g_{N+1}^2\Big(\big(\frac{1}{g_{N}}-\frac1{g_{N+1}}\big)\mw\Big)^2\cdot \frac{B_N}{T_N}\\
&+ g_{N+1}^2\Big(\big(\frac{1}{g_{N}}-\frac1{g_{N+1}}\big)\mw+\sigma\Big)^2\frac{W_N}{T_N}.
\end{align*}
We note that $B_N=T_N-W_{N}$, as well as $W_N=\mw/g_N$. Simplification gives
\begin{align}
\label{eqn:exactSecMom}
\E(X_{N+1}^2\given \field_N) &=g_{N+1}^2\big(\frac{1}{g_{N}}-\frac1{g_{N+1}}\big)^2\mw^2\\
&\quad+ 2\sigma \frac{g_{N+1}^2}{g_N}\big(\frac{1}{g_{N}}-\frac1{g_{N+1}}\big)\cdot \frac{\mw^2}{T_N}+\frac{g_{N+1}^2}{g_N}\sigma^2\frac{\mw}{T_N}.
\end{align}
Concerning asymptotics, we know that $g_N$ is of order $N^{-\Lambda}$~\eqref{eq:Def_gN}, and we have
\begin{equation}
\label{def:DiffGn}
\big(\frac{1}{g_{N}}-\frac1{g_{N+1}}\big)=\mathcal{O}\big(n^{\Lambda-1}\big).
\end{equation}
Consequently, we obtain
\[
\E(X_{N+1}^2\given \field_N) \sim N^{-\Lambda-1} \sigma^2 \kappa  \cdot \mW{\infty}.
\]
For the expected value we turn again to~\eqref{eqn:exactSecMom} and take expectations. By the tower property of the expected value and the martingale property for $\mw$, such that 
\[
\E(\mw)=\E(\mW{0})= g_0 W_0=W_0,
\]
we obtain
\[
\E(X_{N+1}^2)\sim N^{-\Lambda-1} \sigma^2 \kappa  \cdot W_0.
\]
\end{proof}
Next, we study the fourth moment of $X_N$.
\begin{lem}
\label{lem:fourthMom}
The fourth moment of $X_N=\mw-\mW{N-1}$ satisfies 
\[
\E(X_{N+1}^4)\sim N^{-3\Lambda-1}\kappa^3 \sigma^4 W_0.
\]
\end{lem}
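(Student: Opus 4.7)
The plan is to mirror the computation for the second moment carried out in Lemma~\ref{lem:secondMom}. Starting from~\eqref{eq:XDef}, I would write
\[
X_{N+1} = \alpha_N \mathcal{W}_N + g_{N+1}\Delta_{N+1}, \qquad \alpha_N := g_{N+1}\bigl(g_N^{-1}-g_{N+1}^{-1}\bigr),
\]
and expand the fourth power via the binomial theorem. Conditionally on $\field_N$, the single-step increment $\Delta_{N+1}$ is supported on $\{0,\sigma\}$, so that $\E(\Delta_{N+1}^{k}\mid\field_N)=\sigma^{k}W_N/T_N$ for every $k\geq 1$. This yields an exact finite-sum expression in the spirit of~\eqref{eqn:exactSecMom},
\[
\E(X_{N+1}^{4}\mid\field_N)
= (\alpha_N\mathcal{W}_N)^{4}
+ \sum_{k=1}^{4}\binom{4}{k}(\alpha_N\mathcal{W}_N)^{4-k}(g_{N+1}\sigma)^{k}\,\frac{W_N}{T_N}.
\]

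Next I would identify the dominant summand by reading off the orders of magnitude of the ingredients. From~\eqref{eq:Def_gN}, $g_{N+1}\sim \kappa N^{-\Lambda}$; combining~\eqref{def:DiffGn} with this gives $\alpha_N = \mathcal{O}(N^{-1})$; almost-sure convergence in Theorem~\ref{the1} supplies $\mathcal{W}_N=\mathcal{O}(1)$; and~\eqref{eq:TotalPoYou} together with $W_N=\mathcal{W}_N/g_N$ yields $W_N/T_N=\mathcal{O}(N^{\Lambda-1})$. The five summands therefore have orders
\[
N^{-4},\ N^{-4},\ N^{-\Lambda-3},\ N^{-2\Lambda-2},\ N^{-3\Lambda-1},
\]
and since $\Lambda\in(0,1)$ whenever $\ell>0$, the last of these strictly dominates all the others.

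Finally, substituting the precise asymptotics $g_{N+1}^{4}/g_N\sim \kappa^{3}N^{-3\Lambda}$ and the leading behaviour of $W_N/T_N$ into the dominant summand $\binom{4}{4}(g_{N+1}\sigma)^4 W_N/T_N$ produces
\[
\E(X_{N+1}^{4}\mid\field_N)\sim \kappa^{3}\sigma^{4}\,N^{-3\Lambda-1}\cdot\mathcal{W}_\infty,
\]
after which the unconditional statement follows by taking expectations, invoking the tower property, and using the martingale identity $\E(\mathcal{W}_N)=\mathcal{W}_0=W_0$. The only real technical obstacle is the routine bookkeeping required to verify that each of the four lower-order summands is genuinely $o(N^{-3\Lambda-1})$; this reduces to the strict inequality $\Lambda<1$, which is automatic from $\ell>0$, and can be dispatched in a single line.
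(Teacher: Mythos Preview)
Your proposal is correct and follows essentially the same route as the paper: start from~\eqref{eq:XDef}, expand the fourth power via the binomial theorem using $\E(\Delta_{N+1}^{k}\mid\field_N)=\sigma^{k}W_N/T_N$, identify the $k=4$ summand $g_{N+1}^{4}\sigma^{4}W_N/T_N=(g_{N+1}^{4}/g_N)\sigma^{4}\mathcal{W}_N/T_N$ as dominant via the order comparison (which hinges on $\Lambda<1$), and then take expectations using $\E(\mathcal{W}_N)=W_0$. The paper's write-up is slightly more compressed---it lumps the three intermediate cross-terms into a single $\mathcal{O}(N^{-2\Lambda-2})$---whereas you list all five orders explicitly, but the argument is the same.
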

\begin{proof}
From~\eqref{eq:XDef} we get
\begin{align*}
\E(X_{N+1}^4\given \field_N) &=g_{N+1}^4\Big(\big(\frac{1}{g_{N}}-\frac1{g_{N+1}}\big)\mw\Big)^4\cdot \frac{T_N-W_N}{T_N}\\
&+ g_{N+1}^4\Big(\big(\frac{1}{g_{N}}-\frac1{g_{N+1}}\big)\mw+\sigma\Big)^4\frac{W_N}{T_N}.
\end{align*}
We simplify using the binomial theorem and apply the expansions~\eqref{eq:Def_gN} and\eqref{def:DiffGn}. 
This leads to 
\begin{align*}
\E(X_{N+1}^4\given \field_N) &=g_{N+1}^4\Big(\big(\frac{1}{g_{N}}-\frac1{g_{N+1}}\big)\mw\Big)^4\\
\frac{g_{N+1}^4}{g_N}\frac{\mw}{T_N}\sigma^4 + \mathcal{O}(N^{-2\Lambda-2}),
\end{align*}
which leads the stated asymptotics of the expected value using the expansion~\eqref{eq:Def_gN}.
\end{proof}

We state the main result of this section, a central limit theorem for the martingale tail sum of the number of white balls. 
\begin{theorem}
Let $W_N$ be the number of white balls at time $N$ in a generalized P\'olya-Young urn. Then, $\mw = g_N W_N$ is an almost surely
convergent martingale. In distribution, we obtain the central limit theorem for the martingale tail sum $\mw-\mW{\infty}$:
\[
N^{\frac{\Lambda}2} \frac{\beta}\eta(\mw-\mW{\infty}) \to \mathcal{N},
\]
with $\beta_N=\frac{\sqrt{\Lambda}}{\sigma\sqrt{\kappa}}$ and $\eta=\sqrt{\mW{\infty}}$. Moreover, it holds almost surely that
\begin{align*}
\limsup_{N \to \infty}  \frac{\mw - \mW{\infty}}{\eta  N^{\frac{\Lambda}2} \beta  \sqrt{2 \log \log N^{\frac{\Lambda}2} \beta}}  = 1, \quad \liminf_{N \to \infty} \frac{\mW{\infty}}{\eta N^{\frac{\Lambda}2} \beta  \sqrt{2 \log \log N^{\frac{\Lambda}2} \beta}}  = -1. 
\end{align*}

\end{theorem}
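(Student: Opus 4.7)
The plan is to apply Proposition~\ref{prop:heyde} to the martingale $Z_N = \mw - \mW{0}$ with increments $X_N$ as defined in~\eqref{def:ZX} and filtration $\mathcal{G}_N = \field_N$. The martingale is $L_2$-bounded, since Lemma~\ref{lem:secondMom} gives $\E(X_i^2) = O(i^{-\Lambda-1})$ and $\Lambda > 0$, so $\sum_i \E(X_i^2) < \infty$; consequently $Z_N$ converges almost surely and in $L_2$ to $Z := \mW{\infty} - \mW{0}$, and the martingale tail sum $Z_N - Z$ equals the quantity of interest $\mw - \mW{\infty}$.

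For condition~\eqref{con1}, the starting point is the exact expression~\eqref{eqn:exactSecMom} for $\E(X_{i+1}^2\given \field_i)$, whose dominant contribution is of order $\sigma^2 \kappa \cdot i^{-\Lambda-1} \cdot \mW{i}$ almost surely. Combining $\mW{i} \to \mW{\infty}$ a.s.\ with Abel-type summation on the weights $i^{-\Lambda-1}$ yields, almost surely,
\[
\sum_{i=N}^\infty \E(X_i^2 \given \field_{i-1}) \sim \frac{\sigma^2 \kappa}{\Lambda} N^{-\Lambda} \mW{\infty}, \qquad s_N^2 \sim \frac{\sigma^2 \kappa}{\Lambda} N^{-\Lambda} W_0,
\]
the second asymptotic being the unconditional statement from Lemma~\ref{lem:secondMom}. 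The ratio tends to $\eta_{\mathrm{H}}^2 := \mW{\infty}/W_0$, identifying the random limit appearing in Heyde's proposition. A direct simplification gives $(\eta_{\mathrm{H}} s_N)^{-1} \sim N^{\Lambda/2}\sqrt{\Lambda}/(\sigma\sqrt{\kappa\mW{\infty}}) = N^{\Lambda/2}\beta/\eta$ in the notation of the theorem, matching the stated CLT normalization.

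For the Lindeberg condition~\eqref{con2} and the LIL hypotheses \textbf{L1}, \textbf{L2}, the key input is Lemma~\ref{lem:fourthMom}. The Markov-type bounds
\[
X_i^2 \I{|X_i| \geq \varepsilon s_N} \leq (\varepsilon s_N)^{-2} X_i^4, \qquad |X_i| \I{|X_i| \geq \varepsilon s_i} \leq (\varepsilon s_i)^{-3} X_i^4,
\]
reduce all three conditions to controlling tails of $s_i^{-4}\E(X_i^4)$. By Lemma~\ref{lem:fourthMom}, $s_i^{-4}\E(X_i^4) = O(i^{2\Lambda}\cdot i^{-3\Lambda-1}) = O(i^{-\Lambda-1})$, which is summable (this gives \textbf{L2} directly, and after the analogous exponent count also \textbf{L1}), and whose tail is of order $N^{-\Lambda}$; combined with $s_N^{-2} = O(N^{\Lambda})$, this produces an $O(N^{-\Lambda}) \to 0$ bound in~\eqref{con2}.

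The main obstacle is upgrading condition~\eqref{con1} from the pointwise asymptotic for $\E(X_{i+1}^2\given\field_i)$ to the almost sure statement required by the proposition: one must ensure that the correction $\E(X_{i+1}^2\given\field_i) - \sigma^2\kappa\, i^{-\Lambda-1}\mW{\infty}$ is controlled uniformly enough in $i$ that its tail sum is $o(N^{-\Lambda})$ almost surely. I would handle this by splitting $\mW{i} = \mW{\infty} + (\mW{i} - \mW{\infty})$ directly inside~\eqref{eqn:exactSecMom}: the $\mW{\infty}$-part yields the stated limit, while the difference contributes a vanishing correction, since $\mW{i} - \mW{\infty} \to 0$ almost surely and the weights $i^{-\Lambda-1}$ sum to the same order as $s_N^2$. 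Once~\eqref{con1} is in hand, Proposition~\ref{prop:heyde} assembles both the CLT and the LIL in the stated form.
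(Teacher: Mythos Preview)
Your proposal is correct and follows essentially the same route as the paper: verify the hypotheses of Proposition~\ref{prop:heyde} via the second- and fourth-moment Lemmas~\ref{lem:secondMom} and~\ref{lem:fourthMom}, using the Markov-type bounds $X_i^2\I{|X_i|\ge\varepsilon s_N}\le(\varepsilon s_N)^{-2}X_i^4$ and $|X_i|\I{|X_i|\ge\varepsilon s_i}\le(\varepsilon s_i)^{-3}X_i^4$ for~\eqref{con2}, \textbf{L1}, \textbf{L2}. You are in fact more careful than the paper in two places: you make the $L_2$-boundedness explicit, and you correctly flag (and resolve via the splitting $\mW{i}=\mW{\infty}+(\mW{i}-\mW{\infty})$) the passage from the pointwise asymptotic of $\E(X_{i+1}^2\mid\field_i)$ to the almost sure tail-sum statement required in~\eqref{con1}, which the paper treats tersely.
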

\begin{proof}
We begin to check condition~\eqref{con2} and use Lemma~\ref{lem:fourthMom}:
\begin{align}
\E(X_i^2 \I{|X_i| \geq \varepsilon s_N}) &=
\int_{\Omega} X_i^2 (\I{|X_i| \geq \varepsilon s_N})d\P
=\int_{|X_i| \geq \varepsilon s_N}X_i^2 d\P
\le \int_{|X_i| \geq \varepsilon s_N}\frac{X_i^4}{\varepsilon^2 s_N^2} d\P
\\
&\le \frac1{\varepsilon^2 s_N^2}\int_{\Omega }X_i^4 d\P
= \frac1{\varepsilon^2 s_N^2}\E(X_i^4)
\sim \frac1{\varepsilon^2} N^{-2\Lambda-1}\kappa^3 \frac{\sigma^2\kappa}{\Lambda},
\end{align}
for $i\ge N$. This implies that
\[
\frac1{s_N^2}\sum_{i=N}^\infty  \E(X_i^2 \I{|X_i| \geq \varepsilon s_N}) 
\] 
is of order $N^{-2\Lambda}\cdot N^{\Lambda}=N^{-\Lambda}$ and tends to zero for $N\to\infty$. 
Concerning condition~\eqref{con1} we use Lemma~\ref{lem:secondMom} and obtain
\[
s_N^2 \sum_{i=N}^\infty \E(X_{N+1}^2\given \field_N)
\to   W_0 \mW{\infty}=\eta^2.
\]
Next we check the conditions \textbf{L1.} and \textbf{L2.}.
For \textbf{L2.} we combine Lemmata~\ref{lem:secondMom} and~\ref{lem:fourthMom}. 
As
\[
s_N^{-4} \E(X_N^4)=\mathcal{O}( N^{-\Lambda-1}),
\]
the sum is convergent. 
For \textbf{L1.} we proceed similarly to the proof of condition~\eqref{con2}:
\begin{align*}
\E(|X_i| \I{|X_i| \geq \varepsilon s_i})
&=\int_{\Omega} |X_i| (\I{|X_i| \geq \varepsilon s_i})d\P
\le \int_{\Omega}\frac{|X_i|^4}{\varepsilon^3 s_i^3} d\P
&= \frac1{\varepsilon^3 s_i^3}\E(X_i^4).
\end{align*}
By the asymptotics of $s_N$ and $\E(X_N^4)$ from Lemmata~\ref{lem:secondMom} and~\ref{lem:fourthMom}
we note that $\frac{\E(X_i^4)}{s_i^4}$ is of order 
$i^{-1-3\Lambda}\cdot i^{2\Lambda}=i^{-1-\Lambda}$, proving \textbf{L1.}
\end{proof}

\section{P\'olya-Young urns with multiple colors}
\begin{defi}[Multicolor P\'olya-Young urns]
For period $p$, balance $\sigma$ and
parameter $\ell\ge 0$ we introduce P\'olya-Young urns with $t\ge 2$ different types (colors):
\[
M_1=\dots=M_{p-1}=
\text{diag}(\sigma,\dots,\sigma),
\quad
M_j=
\left(
\begin{matrix}
\sigma&0\dots &0& \ell\\
0&\sigma&0\dots & \ell\\
 \vdots &&&\vdots\\
0&\dots&\dots  0& \sigma+\ell\\
\end{matrix}
\right),\quad 1\le j\le p.
\]
\end{defi}
Let $W_{N,\ell}$ denote the number of balls of type $\ell$, $1\le \ell\le t$ at time $N$, 
with $T_N= \sum_{\ell=1}^{t}W_{N,\ell}$. We are interested in the distribution of the random vector 
\[
\Wn=(W_{N,1},\dots,W_{N,t-1})
\]
both for 
fixed $N\ge 1$, as well as in the limit $N\to\infty$. 

\subsection{Explicit Moments and limit law} 
In our to obtain the mixed moments of $\Wn$ we use again martingales. 
Given positive integers $s_1,\dots,s_{t-1}$, we use the shorthand notation $\Wns$ for the product
\[
\Wns=\prod_{\ell=1}^{t-1}\binom{\frac{W_{N,\ell}}{\sigma}+s_\ell-1}{s_\ell}
=\prod_{\ell=1}^{t-1}\frac{\auffak{(W_{N,\ell}/\sigma)}{s_\ell}}{s_\ell!}.
\]
\begin{lem}
\label{lem:MultiMartin}
Let $g_{N,\mathbf{s}}=\prod_{k=0}^{N-1}\big(\frac{T_k}{T_k+\sigma\sum_{\ell=1}^{t-1}s_\ell}\big)$. The random variable 
$g_{N,\mathbf{s}}\Wns$ is a non-negative martingale. 
\end{lem}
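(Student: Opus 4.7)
My plan is to adapt the one-color martingale argument from the proof of Theorem~\ref{the1} to the multivariate setting. The key structural observation is that for $\ell < t$, the entries of column $\ell$ of every replacement matrix $M_1,\dots,M_p$ vanish off the diagonal, so $W_{N,\ell}$ for $\ell < t$ increases only when color $\ell$ itself is drawn, and in that case increases by exactly $\sigma$. The extra $\ell$-units of reinforcement appearing at period-$p$ steps flow only into color $t$, which is not among the tracked coordinates of $\Wn$. Hence, as far as the vector $(W_{N,1},\dots,W_{N,t-1})$ is concerned, at each step at most one coordinate changes, and it changes by exactly $\sigma$.

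First I would condition on which color is drawn at step $N$. Since at most one factor of $\Wns$ can change, Pascal's identity $\binom{c}{s}=\binom{c-1}{s}+\binom{c-1}{s-1}$ applied to the shifted factor gives
\[
\binom{W_{N-1,j}/\sigma + s_j}{s_j} = \binom{W_{N-1,j}/\sigma + s_j - 1}{s_j} + \binom{W_{N-1,j}/\sigma + s_j - 1}{s_j - 1},
\]
and the elementary relation $\binom{c-1}{s-1} = \frac{s\sigma}{W_{N-1,j}}\binom{c-1}{s}$, applied with $c-1 = W_{N-1,j}/\sigma + s_j - 1$, lets me rewrite the correction to the $j$-th factor as $\frac{s_j \sigma}{W_{N-1,j}}$ times the unshifted factor, thereby pulling out a common $\Wnms$.

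Next I would take conditional expectation. Using $\P(\text{draw color } j \mid \field_{N-1}) = W_{N-1,j}/T_{N-1}$ together with the case that color $t$ is drawn (where the product is unchanged), the weights $W_{N-1,j}$ cancel and the $t-1$ relevant color contributions sum cleanly, yielding
\[
\E(\Wns \given \field_{N-1}) = \Wnms \cdot \frac{T_{N-1} + \sigma\sum_{\ell=1}^{t-1} s_\ell}{T_{N-1}}.
\]
Multiplying by $g_{N,\mathbf{s}}$ precisely telescopes this factor against $g_{N-1,\mathbf{s}}$, giving $\E(g_{N,\mathbf{s}}\Wns \given \field_{N-1}) = g_{N-1,\mathbf{s}} \Wnms$, and non-negativity is immediate from the definitions.

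I do not anticipate a genuine obstacle. The main subtlety is the reduction noted at the outset, namely that only the last color absorbs the periodic extra reinforcement, so the first $t-1$ coordinates behave exactly as classical \Polya\ colors sharing the common total $T_N$. Once that is recognized, the argument is a direct multivariate lift of the single-color computation used to prove Theorem~\ref{the1}, with the binomial identity applied factorwise.
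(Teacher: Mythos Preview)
Your proposal is correct and follows essentially the same route as the paper: you both exploit that only one of the first $t-1$ coordinates can increase (by $\sigma$) at each step, apply Pascal's rule factorwise to isolate the change, and then use the ratio $\binom{c-1}{s-1}/\binom{c-1}{s}=s_j\sigma/W_{N-1,j}$ so that the draw probability $W_{N-1,j}/T_{N-1}$ cancels the numerator, yielding the factor $1+\sigma\sum_{\ell}s_\ell/T_{N-1}$ and hence the martingale after multiplying by $g_{N,\mathbf{s}}$. Your explicit remark that the periodic extra $\ell$-balls land only in color~$t$ is exactly the paper's opening observation, phrased a bit more structurally.
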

\begin{proof}
By definition of the urn process, only one of the random variables can increase by $\sigma$ after a draw.
Consequently, only a single random variable $\frac{W_{N,\ell}}{\sigma}$ may increase by one.
We obtain
\begin{align*}
\E\bigl(\Wns \given \field_{N-1}\bigr)&= 
\Wnms + \sum_{\ell=1}^{t-1} \binom{\frac{W_{N,\ell}}{\sigma}+s_\ell-1}{s_\ell-1}\frac{ W_{N-1,\ell}}{T_{N-1}}\prod_{\substack{j=1\\ j\neq \ell }}^{t-1} \binom{\frac{W_{N,\ell}}{\sigma}+s_j-1}{s_j}
\\
&=\Wnms + \sum_{\ell=1}^{t-1}\binom{\frac{W_{N,\ell}}{\sigma}+s_\ell-1}{s_\ell-1} \frac{\Wnms}{\binom{\frac{W_{N,\ell}}{\sigma}+s_\ell-1}{s_\ell}}
\cdot \sigma\cdot\frac{W_{N-1,\ell}}{T_{N-1}}\\
&=\Wnms \Big(1+\frac{\sigma\sum_{\ell=1}^{t-1}s_\ell}{T_{N-1}}\Big).
\end{align*}
This directly leads to the stated martingale structure. 
\end{proof}
From the martingale structure we directly obtain the mixed rising factorial moments and then the limit law.
\begin{theorem}
\label{the:MV}
The mixed rising factorial moments of $W_{N,1},\dots,W_{N,t-1}$ are given by
\[
\E\big(\prod_{\ell=1}^{t-1}\auffak{(W_{N,\ell}/\sigma)}{s_\ell}\big) =
\frac{1}{g_{N,\mathbf{s}}}\prod_{\ell=1}^{t-1}\frac{\Gamma(\frac{w_{0,\ell}}{\sigma}+s_\ell)}{\Gamma(\frac{w_{0,\ell}}{\sigma})}. 
\]
The scaled random variables $W_{N,1}/(\sigma N^{\Lambda}),\dots,W_{N,t-1}/(\sigma N^{\Lambda})$ jointly converge
to a limit law, determined by its moment sequence
\[
\mu_{s_1,\dots,s_{t-1}}=
\bigg(\prod_{\ell=1}^{t-1}\frac{\Gamma(\frac{w_{0,\ell}}{\sigma}+s_\ell)}{\Gamma(\frac{w_{0,\ell}}{\sigma})}\bigg)
\prod_{r=0}^{p-1}\frac{\Gamma(\frac{r}\psi +\frac{\sum_{\ell=1}^{t}w_{0,\ell}}{\psi\sigma})}{\Gamma(\frac{r}\psi +\frac{\sum_{\ell=1}^{t}w_{0,\ell}}{\psi\sigma}+\frac{s_1+\dots+s_{t-1}}{\psi})}.
\]
For $\ell/\sigma\in\N$, the distribution of the limit law is given by a product of a Dirichlet distribution and independent generalized Gamma distributions, 
\[
(D_1\cdot G,\dots,D_{t-1}\cdot G),\quad 
\text{where } (D_1,\dots,D_t)=\Dir(\frac{w_{0,1}}\sigma,\dots,\frac{w_{0,t}}\sigma)
\]
and $G$ denotes a product of generalized Gamma distributions 
\[
G=\prod_{\ell=p}^{\psi-1}GG(\frac{\sum_{\ell=1}^{t}w_{0,\ell}+\ell\sigma}{\sigma},\psi).
\]
In the general case, the distribution of the limit law is given by a Dirichlet distribution and the scaled local time $L$ of a noise-reinforced Bessel process of dimension $d=\frac{2\ell}{p\sigma +\ell}$ and reinforcement parameter $r=-\frac{p-1}2$:
\[
(D_1\cdot G,\dots,D_{t-1}\cdot G),
\]
where 
\[
(D_1,\dots,D_t)=\Dir(\frac{w_{0,1}}\sigma,\dots,\frac{w_{0,t}}\sigma),
\quad\text{and } G=\tilt_{\frac{\sum_{\ell=1}^{t}w_{0,\ell}}{\sigma}}(L).
\]
\end{theorem}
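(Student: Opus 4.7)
The plan is to run the univariate proofs of Theorems~\ref{the1} and~\ref{the2} in parallel across the coordinates, the main new ingredient being the Dirichlet factor coming from the coupling of the coordinates through the shared total $T_N$. The first step is to iterate the martingale identity in Lemma~\ref{lem:MultiMartin}: since $g_{N,\mathbf{s}}\Wns$ is a non-negative martingale, taking expectations gives
\[
\E(\Wns)=\frac{1}{g_{N,\mathbf{s}}}\prod_{\ell=1}^{t-1}\binom{\frac{w_{0,\ell}}{\sigma}+s_\ell-1}{s_\ell},
\]
which, after multiplying by $\prod_{\ell}s_\ell!$ and rewriting the binomials as ratios of Gamma functions, yields the stated mixed rising factorial moment formula.

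For the limit law I would repeat the asymptotic analysis that produced~\eqref{eq:AsympG}--\eqref{eq:AsympG2}, with the single shift $1/\psi$ replaced by $(\sum_\ell s_\ell)/\psi$. Since $T_N$ depends only on the total $N$, the product $1/g_{N,\mathbf{s}}$ factors exactly as in Theorem~\ref{the1}, and Stirling~\eqref{eq:stirling} gives
\[
\E(\Wns)\sim n^{(s_1+\dots+s_{t-1})\Lambda}\cdot \frac{1}{\prod_\ell s_\ell!}\cdot\mu_{s_1,\dots,s_{t-1}}\cdot\text{prefactors},
\]
so that $W_{N,\ell}/(\sigma N^{\Lambda})$ jointly have the announced moment sequence after conversion from rising factorials to raw powers via the multivariate Stirling expansion~\eqref{eq:risingToRaw}. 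Uniqueness of the limiting law from its moments follows from a multivariate Carleman-type criterion applied slice by slice (apply~\eqref{eq:Carleman} to each one-dimensional marginal, then invoke Petersen's cross-moment criterion), and joint convergence then follows from the multivariate Fr\'echet--Shohat theorem.

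To identify the law I would split the moment sequence into two factors, exactly as in the univariate case. The block
\[
\frac{\prod_{\ell=1}^{t-1}\Gamma(\frac{w_{0,\ell}}\sigma+s_\ell)}{\prod_{\ell=1}^{t-1}\Gamma(\frac{w_{0,\ell}}\sigma)}\cdot \frac{\Gamma(\frac{\sum_{\ell=1}^{t}w_{0,\ell}}\sigma)}{\Gamma(\frac{\sum_{\ell=1}^{t}w_{0,\ell}}\sigma+\sum_{\ell=1}^{t-1}s_\ell)}
\]
is the mixed moment sequence of the first $t-1$ coordinates of a $\Dir(\frac{w_{0,1}}\sigma,\dots,\frac{w_{0,t}}\sigma)$ random vector (since the $t$th coordinate $\frac{w_{0,t}}\sigma$ is absent from the numerator but adds to the denominator's argument). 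What remains is precisely the moment sequence of the univariate limit law from Theorem~\ref{the2} evaluated at the aggregated index $s=s_1+\dots+s_{t-1}$, namely a power of $\psi$ times generalized Gammas for $\ell/\sigma\in\N$, or the tilted scaled local time $\tilt_{(\sum_\ell w_{0,\ell})/\sigma}(L)$ in general. Multiplying the Dirichlet vector coordinate-wise by an independent scalar $G$ exactly multiplies each mixed moment by $\E(G^{s_1+\dots+s_{t-1}})$, so the product factorization of the moments translates into the stated product representation for the law.

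The main obstacle is essentially bookkeeping: correctly normalizing the Dirichlet factor so that the remaining scalar factor is precisely the univariate limit of Theorem~\ref{the2} with the full sum $\sum_{\ell=1}^{t}w_{0,\ell}$ playing the role of $w_0+b_0$. Once that reduction is carried out, the identification with the Dirichlet$\,\times\,$(Beta--generalized Gamma) product in the integer case, and with the Dirichlet$\,\times\,$tilted local time in the general case, is a direct consequence of Lemmas~\ref{lem:PanKu1} and~\ref{lem:PanKu2} and the multiplicativity property~\eqref{eq:tilt} of $\tilt_c$, applied exactly as at the end of the proof of Theorem~\ref{the2}.
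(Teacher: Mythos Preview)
Your proposal is correct and follows essentially the same route as the paper's own proof: both derive the mixed rising factorials directly from Lemma~\ref{lem:MultiMartin}, pass to raw moments via~\eqref{eq:risingToRaw}, extract the asymptotics of $1/g_{N,\mathbf{s}}$ exactly as in~\eqref{eq:AsympG}--\eqref{eq:AsympG2} with $s$ replaced by $\sum_\ell s_\ell$, and then split the limiting moments into a Dirichlet block and the univariate scalar factor of Theorem~\ref{the2} evaluated at the aggregated index. The only point on which you are slightly more explicit than the paper is the multivariate moment determinacy (you invoke a Petersen-type criterion on the marginals, whereas the paper simply appeals to moment convergence and the univariate argument); this is a welcome addition rather than a deviation.
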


\begin{proof}
Our proof strategy follows closely the proofs of Theorems~\ref{the1} and~\ref{the2}. 
First, we observe the obvious fact that the total number of balls~\eqref{eq:TotalPoYou}
has the form
\begin{equation}
\label{eq:TotalPoYouMult}
T_N=\big(n(p-1)+k\big)\sigma +n(\sigma+\ell)+\sum_{j=1}^{t}w_{0,j}
=n(p\sigma + \ell)+k\sigma +\sum_{j=1}^{t}w_{0,j}.
\end{equation}

From Lemma~\ref{lem:MultiMartin} and the tower property of expectations we readily obtain
a recurrence relation for $\E(\Wns)$:
\[
\E(\Wns)=\frac{T_{N-1}+\sigma\sum_{\ell=1}^{t-1}s_\ell}{T_{N-1}}\E(\Wnms),\quad N\ge 1.
\]
From this recurrence relation we directly get an explicit expression for $\E(\Wns)$,
\[
\E(\Wns)=\frac{1}{g_{N,\mathbf{s}}}\mathbf{W}_{0,s}.
\]
Next, we translate the result to rising factorials, obtaining the stated formula for the moments.
By~\eqref{eq:risingToRaw} the raw mixed moments are expressed in terms of the rising factorial moments.   
An application of Stirling's formula~\eqref{eq:stirling} leads then to moment convergence:
\[
\E\big(W_{N,1}^{s_1}/(\sigma N^{\Lambda})^{s_1},\dots,W_{N,t-1}^{s_{t-1}}/(\sigma N^{\Lambda})^{s_{t-1}}\big)
\to \mu_{s_1,\dots,s_{t-1}},
\]
for $s_1,\dots,s_{t-1}\ge 0$.
In order to identify the moments we first note that the appearance of the sum $\sum_{j=1}^{t-1}s_j$
indicates a common factor in the limit law. Following the proof of Theorem~\ref{the2} step by step this factor leads either
to a product of generalized Gamma distributed random variables or to the local time of a Bessel process. 
Finally, we look at the moments of a Dirichlet distribution
$(D_1,\dots,D_t)=\Dir(\frac{w_{0,1}}\sigma,\dots,\frac{w_{0,t}}\sigma)$, 
which are given by
\[
\E\big(D_1^{s_1}\cdot\dots\cdot D_{t-1}^{s_{t-1}}\big)
=\frac{\Gamma(\sum_{j=1}^{t}\frac{w_{0,j}}{\sigma})}{
\Gamma(\frac{w_{0,t}}{\sigma}+\sum_{j=1}^{t-1}(\frac{w_{0,j}}{\sigma}+s_{j}))}\cdot
\prod_{i=1}^{t-1}\frac{\Gamma(s_i+\frac{w_{0,i}}\sigma)}{\Gamma(\frac{w_{0,i}}\sigma)}.
\]
In the case of $p+\ell/\sigma\in\N$ we directly observe the moments of the Dirichlet distribution, whereas in the general case 
we simply multiply with
\[
1=\frac{\Gamma(\sum_{j=1}^{t}\frac{w_{0,j}}{\sigma})}{
\Gamma(\frac{w_{0,t}}{\sigma}+\sum_{j=1}^{t-1}(\frac{w_{0,j}}{\sigma}+s_{j}))}\cdot 
\frac{
\Gamma(\frac{w_{0,t}}{\sigma}+\sum_{j=1}^{t-1}(\frac{w_{0,j}}{\sigma}+s_{j}))}{\Gamma(\sum_{j=1}^{t}\frac{w_{0,j}}{\sigma})}
\]
to observe the stated decomposition.
\end{proof}

\section{Increasing tree families with periodic immigration}\label{sec:inc}
\subsection{Introduction to increasing tree families}
Increasing trees, also called increasingly labelled trees, are rooted labelled trees. The nodes of a tree $T$ of size $|T| = n$, with integer $n\ge 1$, are labeled with distinct integers from a label set $\mathcal{M}$ of size $|\mathcal{M}|=n$. Here, the size $|T|$ of a tree denotes the number of vertices of $T$ (and thus coincides with the number of labels). One chooses as label set the first $n$ positive integers, i.e., $\mathcal{M} = [n] := \{1, 2, \dots, n\}$, in such a way that the label of any node in the tree is smaller than the labels of its children. As a consequence, the labels of each path from the root to an arbitrary node in the tree are forming an increasing sequence, which explains the name of such a labelling. Various increasing tree models turned out to be appropriate in order to describe the growth behavior of quantities in a number of applications and occurred in the probabilistic literature. 

\begin{figure}[!htb]
\centering
\includegraphics[scale=0.7]{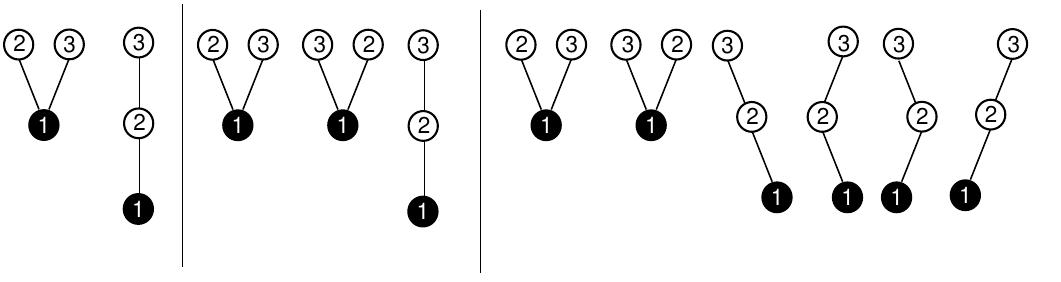}
\caption{Two recursive trees of size three (no left-to-right order); three plane recursive trees of size three; six binary increasing trees.}
\label{fig:IncTrees}
\end{figure}

E.g., they are used to describe the spread of epidemics, to model pyramid schemes, and as a simplified growth model of the world wide web. See Mahmoud and Smythe~\cite{MahSmy1995-2} for a survey collecting results about recursive trees, a subfamily of increasing trees, prior 1995. For general results about increasing trees, especially their enumeration, we refer to~\cite{BerFlaSal1992,PanPro2007},
as well as the book of Drmota~\cite{Drmota} and references therein. Increasing trees are also intimately connected to urn models, see for example~\cite{MahSmy1991} or the recent works~\cite{AddarioA,Janson2005RSA}. Next we are going to describe in more detail the tree evolution process which generates random trees (of arbitrary size $n$) of grown simple families of increasing trees. This description is a consequence of the considerations made by Panholzer and Prodinger~\cite{PanPro2007}. We also refer to their work, as well as~\cite{BerFlaSal1992,Drmota}, for more information about combinatorial properties of increasing tree families, especially enumerative results.
\begin{itemize}
\item Step $1$: The process starts with the root labelled by $1$.
\item Step $i+1$: At step $i+1$ the node with label $i+1$ is attached to any previous node $v$ with out-degree $d(v)$) of the already grown tree of size $i$ with probabilities 
given by
\begin{equation*}
   p(v)  =
   \begin{cases}
      \displaystyle{\frac{1}{i}}, & \quad \text{for recursive trees (A)}, \\
      \displaystyle{\frac{d-d(v)}{(d-1)i+1}}, & \quad \text{for $d$-ary increasing trees (B) }, \\
      \displaystyle{\frac{d(v)+\alpha}{(\alpha+1)i-1}}, & \quad \text{for GPORTs (C)};
   \end{cases}
\end{equation*}
\end{itemize}
here the abbreviation GPORTs stands for generalized plane-oriented recursive trees; the special case $\alpha=1$ leads to ordinary plane-oriented recursive trees. We note in passing that the naming of these trees is not uniform: recursive trees are sometimes called increasing Cayley trees, and (generalized) plane-oriented recursive trees are also known as~\cite{AddarioA,BA,MPP,Prod1996,JerzyA} heap-ordered trees, scale-free trees\footnote{Albeit with a slight different initial configuration}, preferential attachment trees, and B\'arabasi-Albert trees. 
\begin{figure}[!htb]
\centering
\includegraphics[scale=0.7]{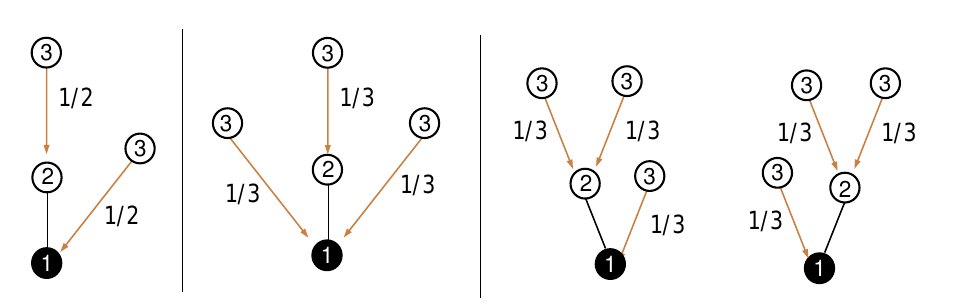}
\caption{Generating recursive trees, plane recursive trees and binary increasing trees by insertion of node labeled three.}
\label{fig:IncTrees2}
\end{figure}

\subsection{Periodic immigration}
In this section we generalize the growth processes for classical increasing trees, as stated in the introduction, 
to increasing trees with immigration. We start with a single tree of size $1$. Given a period $p$, $p>1$: we perform ordinary growth steps, entering nodes labeled two, three, etc., according to one of the three families, i.e. recursive trees, $d$-ary increasing trees and generalized plane-oriented recursive trees. When $N=np$ is a multiple of $p$ step, we first add an ordinary node labeled $N$, and then additionally add a new special root node, also labeled $p$. These new root nodes have connectivity $\ell$, with real $\ell>0$ for recursive trees and generalized plane-oriented increasing trees, and $\ell\in\N$ for $d$-ary increasing trees, leading to the creation of an increasing forest. For $N=np+k\ge 1$, with $0\le k<p$, let $C_N$ denote the total connectivity, a quantity which governs the probabilities of connections of nodes. 
\begin{equation}
\label{eqn:Connectivity1}
C_N=\big(n(p-1)+k\big)\sigma +n(\sigma+\ell)-\kappa
=n(p\sigma + \ell)+k\sigma+\kappa,
\end{equation}
where the constants $\sigma$ and $\kappa$ depend on the specific tree family: 
\begin{equation}
\label{eqn:Connectivity2}
\sigma =
\begin{cases}
1, \\
d-1,\\
1+\alpha, \\
\end{cases},
\quad
\kappa= 
\begin{cases}
0, &\text{for recursive trees}, \\
1, &\text{for $d$-ary increasing trees},\\
-1,& \text{for GPORTs}.\\ 
\end{cases}
\end{equation}
We describe the growth processes of increasing trees with periodic immigration in more detail. 
We distinguish between ordinary nodes labeled one, two , three, etc. and special nodes which are created at time steps which are multiples of the period $p$.
These special nodes are new roots, which may attract ordinary nodes. This leads to an increasing forest, with an ordinary root labeled one 
and additional roots created at times $p,2p,3p,\dots$.  We label the new root nodes by their creation time stamp. Thus, their are two nodes labeled $mp$, $m\ge 1$: a root node and an ordinary node. In the following we use the term ordinary nodes for the nodes labeled one up to $N$, including the original root labeled one. 

\begin{defi}
The following process generates increasing tree families with periodic immigration.
\begin{itemize}
\item Step $1$: The process starts with the root labeled by $1$.
\item Step $i+1$: the node with label $i+1$ is attached to any previous ordinary node $v$ with out-degree $d(v)$ in the increasing forest with $i$ ordinary nodes with probabilities 
given by
\begin{equation}
\label{eqn:incTree1}
   p(v)  =
   \begin{cases}
      \displaystyle{\frac{1}{C_i}}, & \quad \text{for recursive trees}, \\[0.1cm]
      \displaystyle{\frac{d-d(v)}{C_i}}, & \quad \text{for $d$-ary increasing trees}, \\[0.1cm]
      \displaystyle{\frac{d(v)+\alpha}{C_i}}, & \quad \text{for GPORTs};\\
   \end{cases}
\end{equation}
Additionally, the node with label $i+1$ is attached to any previously created new root nodes $v$, there are $\big\lfloor \frac{i}{p}\big\rfloor$ many of them, with probability 
given by
\begin{equation*}
   p(v)  =
   \begin{cases}
      \displaystyle{\frac{\ell}{C_i}}, & \quad \text{for recursive trees}, \\[0.1cm]
      \displaystyle{\frac{\ell-d(v)}{C_i}}, & \quad \text{for $d$-ary increasing trees}, \\[0.1cm]
      \displaystyle{\frac{d(v)+\ell}{C_i}}, & \quad \text{for GPORTs};\\
   \end{cases}
\end{equation*}
\item Step $i+1$, with $i+1\mod p=0$: a new root is created.
\end{itemize}
\end{defi}

We point out that we always have to wait $p$ insertion steps for the creation of a new root.
However, as the node labeled one is the root of the original tree, the first new root is actually created $p-1$ steps after the original root. 
Of course, one may readily modify this process to start with the original root labeled zero, leading to a waiting time of $p$ insertion steps.
Later on, such a modified process will turn out to be useful to relate this growth process to the chinese restaurant process.

\begin{figure}[!htb]
\centering
\includegraphics[scale=0.7]{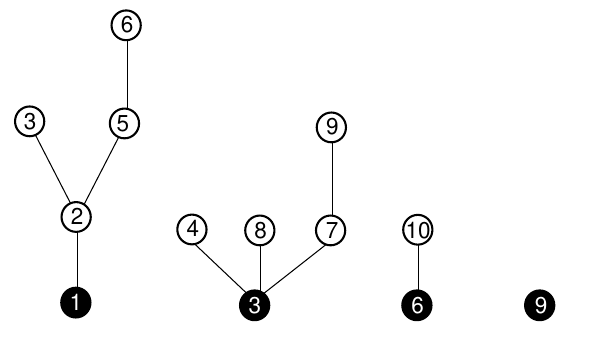}
\caption{An increasing tree of size $10$ with periodic immigration, $p=3$; the three additional new roots have the same labels as the time stamp of their creation.}
\label{fig:IncTreesImmigration}
\end{figure}

\begin{remark}[Restrictions on the immigration]
For recursive trees this generalized growth process is closely related to so-called weighted recursive trees.  Let $N\ge 1$. Starting with a tree of size $N$, we attach node $N+1$ to the vertex labeled $k$ with probability $\frac{w_k}{W_N}$, with total weight $W_N=\sum_{k=1}^{N}w_k$. Such trees are known in the literature (see, e.g., \cite{BorVat2006}) as weighted recursive trees, as for constant $w_k=c$, $k\ge 1$, this reduces to ordinary recursive trees. 
In our case, the weights of ordinary nodes are $w_k=1$, whereas immigrated nodes, the new roots in the increasing forests,
have weight $w_k=\ell$. For $d$-ary increasing trees we assume that $\ell\in\N$. Thus, the forest consists of new trees with $\ell$-ary roots.
It is possible to extend the construction to real $\ell>0$. But then, in order to obtain a well-defined growth process, $p(v)  =\frac{\ell}{C_i}$ for new root nodes $v$, 
and further $\displaystyle{\frac{d-1-d(v)}{C_i}}$ for the children of the new roots.
\end{remark}

\subsection{Descendants in increasing trees with periodic immigration}

It the following we turn to so-called label-based parameters~\cite{KuPaLabel} in increasing tree families and relate them with P\'olya-Young urns. We are interested in the number of descendants of a specific node $j$ (with $1 \le j \le N$), i~e. the size of the subtree rooted at $j$ (where size is measured as usual by the number of nodes), in a random size-$N$ increasing tree with immigration. Note that $j$ is counted as a descendant of itself. This parameter is a classical parameter of interest, see~\cite{KuPaIncDesc,MahSmy1991} and the references therein. 
Concerning classical increasing trees, it is well known that the number of descendants of node $j$,
can be modeled not only by trees, but also by urn models~\cite{KuPa2014,KUBA2023113443,MahSmy1991}. 
We extend this connection to increasing trees with immigration.

\begin{theorem}[P\'olya-Young urns and Descendants of node $j$]
The descendants $D_{N,j}$ of node $j$, $j\ge 1$, in grown increasing tree families with immigration of size $N$, with parameters $p\in\N$ and $\ell>0$, 
are distribution as the number of white balls in the P\'olya-Young urns at time $N-j$, with period $p$ and parameter $\ell>0$:
\[
D_{N,j}\law \frac{W_{N-j}-\kappa}{\sigma},
\]
with constants $\kappa$ and balance $\sigma$ given by~\eqref{eqn:Connectivity2}. 
The initial conditions are given by
\[
w_0= \kappa+\sigma=
\begin{cases}
1, &\text{for recursive trees}, \\
d, &\text{for $d$-ary increasing trees},\\
\alpha,& \text{for GPORTs}\\ 
\end{cases}
\qquad 
b_0=C_j-w_0,
\]
where $C_N$ denotes the connectivity~\eqref{eqn:Connectivity1}.
\end{theorem}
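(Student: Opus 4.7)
The plan is to establish a pathwise coupling between the tree evolution after the appearance of node $j$ and a generalized P\'olya-Young urn with period $p$ and parameter $\ell$. The coupling identifies the urn's white balls with the total \emph{attractiveness} (the sum of attachment weights) of all descendants of $j$ in the current increasing forest, and the black balls with the total attractiveness of everything else (other ordinary nodes plus the immigrated root nodes).

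First I would set up the weight bookkeeping. Let $\mathcal{D}_N$ denote the set of descendants of $j$ in the forest of size $N$, with $D_{N,j}=|\mathcal{D}_N|$. The subtree rooted at $j$ is a tree on $D_{N,j}$ nodes, so its number of internal edges equals $D_{N,j}-1$, which in turn equals the sum of out-degrees of its vertices. Using the three weight functions from \eqref{eqn:incTree1} I would verify, case by case, that the total attractiveness $\tilde W_N$ of $\mathcal{D}_N$ satisfies $\tilde W_N=\sigma D_{N,j}+\kappa$: for recursive trees each node contributes $1$ and $\kappa=0$; for $d$-ary trees the sum is $d\, D_{N,j}-(D_{N,j}-1)=(d-1)D_{N,j}+1=\sigma D_{N,j}+\kappa$; for GPORTs it is $(D_{N,j}-1)+\alpha D_{N,j}=(1+\alpha)D_{N,j}-1=\sigma D_{N,j}+\kappa$. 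Thus the identity $W=\sigma D_{N,j}+\kappa$ is exactly the stated linear relation, and reads off $w_0=\sigma+\kappa$ at time $N=j$ (when $\mathcal{D}_j=\{j\}$). The total attractiveness is precisely the connectivity $C_N$, so $b_0=C_j-w_0$ holds, and the total $T_{N-j}=C_N$ matches \eqref{eq:TotalPoYou} with the chosen $w_0,b_0$ and balance $\sigma$.

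Next I would verify the one-step dynamics. Conditionally on the tree at time $i\ge j$, the probability that the next ordinary node attaches to a descendant of $j$ equals $\tilde W_i/C_i = W_i/T_{i-j}$, which is exactly the probability of drawing a white ball in the urn. Attaching a child to a white vertex changes the weight of its parent by the appropriate amount and introduces a new white vertex of degree zero: a short per-family computation (which is already implicit in the verification of the previous paragraph) shows that the net change of $\tilde W$ is $+\sigma$, matching the replacement $\sigma$ in the white row of $M_k$. Attaching to a non-descendant ordinary node, or to any new root, yields a net change of $+\sigma$ to the black part, matching the black-row entries of $M_1,\dots,M_{p-1}$ and the $\sigma+\ell$ black increment of $M_p$ after the $\ell$ contributed by the simultaneous immigration of a fresh root. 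At steps $i+1$ with $i+1\equiv 0\pmod p$ an extra root of weight $\ell$ is added to the forest regardless of where the ordinary node attached; in the urn this is exactly the off-diagonal $\ell$ in $M_p$ which deposits $\ell$ black balls after every draw during a period step.

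Finally, with the coupling verified step by step for $i=j,j+1,\dots,N-1$, the white-ball process $(W_k)_{k=0}^{N-j}$ of a generalized P\'olya-Young urn with period $p$, balance $\sigma$, parameter $\ell$, and initial composition $(w_0,b_0)=(\sigma+\kappa,\,C_j-\sigma-\kappa)$ has the same law as $(\sigma D_{j+k,j}+\kappa)_{k=0}^{N-j}$; evaluating at $k=N-j$ yields $D_{N,j}\law(W_{N-j}-\kappa)/\sigma$. The main subtlety I anticipate is the bookkeeping at a period step, where both an ordinary attachment \emph{and} an immigration happen: one must argue that, regardless of which vertex the ordinary node chose, the resulting joint increment to $(\tilde W,\tilde B)$ matches the corresponding row of $M_p$ (and in particular that the immigrated root is always black because $j$ itself is never a new root for $j\ne mp$; the edge case $j=mp$ is handled by treating the two nodes sharing the label $mp$ separately, the ordinary one being $j$ and the root being black).
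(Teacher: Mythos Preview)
Your proposal is correct and follows essentially the same route as the paper. The paper's proof also hinges on the single observation that the out-degrees in the subtree rooted at $j$ sum to $D_{N,j}-1$, from which it computes $\P\{D_{N+1,j}=m+1\mid D_{N,j}=m\}$ for each of the three families and matches it against the urn's white-ball transition; your ``attractiveness'' $\tilde W_N=\sigma D_{N,j}+\kappa$ is exactly that numerator, just packaged as an explicit pathwise coupling rather than a transition-probability identity.
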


\begin{remark}[Multivariate Connection]
This connections extend to the joint distribution of the descendants of ordinary nodes $j_1,\dots,j_t$, but with a random initial configuration, as we start the urn process when node $j_t$ is present, but the initial configurations depends on the events from time step $j_1+1$ to step $j_t$. Moreover, one has to take care whether a node $j_k$ is contained in another subtree of $j_1,\dots,j_{k-1}$.
\end{remark}

\begin{remark}[Descendants of the roots]
Ordinary increasing trees only possess a only single root, such that $D_{N,1}=N$. In contrast, for increasing trees with periodic immigration, the number of descendants $D_{N,1}$ of the root is not deterministic anymore, as the new root nodes may also attract nodes.
In it also possible to track the individual tree sizes in the forest 
using different initial conditions: for the number of descendants $D^{(r)}_{N,mp}$ of the root created at time $m\cdot p$ the initial condition  is $w_0=\ell$, and the connection to the urn is 
\[
D^{(r)}_{N,mp}\law \frac{W_{N-mp}-w_0}{\sigma}.
\]
We point out that the root nodes satisfy the identity
\[
D_{N,1}+\sum_{m=1}^{\lfloor N /p \rfloor} D^{(r)}_{N,mp} = N
\]
Additionally, we note that the descendants of the roots labeled $1,p,\dots,(t-1)p$ are related to the distribution of the balls of type $1,\dots,t-1$ in multicolor P\'olya-Young urns of dimension $t$: one starts with an urn of size two at time $p$, and increases the number of types each time a new root is created until time $t$, where the time $tp$, where type $t$ encodes the number of nodes not contained in the subtrees rooted at $1,p,\dots,(t-1)p$. 
\end{remark}

\begin{proof}
Following~\cite{KuPa2014}, in order to observe this connection, we observe first that $D_{j,j}=1$ for all three tree families. Next, we proceed similar to~\cite{KuPaIncDescComb}.
Let $\{N+1<_c v\}$ denote the event that the node labeled $N+1$ is attached to the node $v$. 
We analyse the conditional probabilities $\P\{D_{N+1,j}=m+1|D_{N,j}=m\}$. Under our assumption the subtree rooted at the node labeled $j$ has size $m$
and consists of nodes $v_k$, $1\le k\le m$, (with labels larger than $j$).
We obtain
\[
\P\{D_{N+1,j}=m+1|D_{N,j}=m\}= \sum_{k=1}^m \P\{N+1<_c v_k\}.
\] 
For each family the probabilities are determined by~\eqref{eqn:incTree1}. 
The sum of outdegrees in the subtree rooted at label $j$ is deterministic. By our assumption $D_{N,j}=m$ we have
\[
\sum_{k=1}^{m}d(v_k)=m-1.
\]
Consequently, 
\[
\P\{D_{N+1,j}=m+1|D_{N,j}=m\}= 
\begin{cases}
 \displaystyle{\frac{m}{C_N}}, & \quad \text{for recursive trees}, \\[0.1cm]
      \displaystyle{\frac{(d-1)m+1}{C_N}}, & \quad \text{for $d$-ary increasing trees}, \\[0.1cm]
      \displaystyle{\frac{(\alpha+1)m-1}{C_N}}, & \quad \text{for GPORTs};\\
\end{cases}
\]
Moreover, $\P\{D_{N+1,j}=m|D_{N,j}=m\}=1-\P\{D_{N+1,j}=m+1|D_{N,j}=m\}$.
This implies that the stated connection, as the dynamics of the urn model, as described in the introduction, matches the dynamics of the process generating 
increasing trees with periodic immigration.
\end{proof}

\subsection{Outdegree distribution in increasing trees with periodic immigration}

Next, we turn to the distribution of the outdegree of label $j$ in generalized plane-oriented recursive trees~\cite{KuPaLabel}. 
\begin{theorem}[P\'olya-Young urns and Outdegree of node $j$]
\label{theOutdegree}
The oudegree $X_{N,j}$ of node $j$, $j\ge 2$, in an generalized plane-oriented recursive tree of size $n$ with periodic immigration with parameters $p\in\N$ and $\ell>0$, 
is related to the number of white balls $W_{N-j}$ after $N-j$ draws in a modified P\'olya-Young urn with period $p$ and $\ell>0$:
\[
X_{N,j}= W_{N-j}-\alpha,\quad w_0=\alpha,\, b_0=C_j-w_0.
\]
The transition matrices are given by
\[
M_j=
\left(
\begin{matrix}
1&1+\alpha\\
0&1+\alpha\\
\end{matrix}
\right),\quad 1\le j\le p-1,
\quad
M_p=
\left(
\begin{matrix}
1&1+\alpha+\ell\\
0&1+\alpha+\ell\\
\end{matrix}
\right).
\]
\end{theorem}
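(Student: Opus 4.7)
The plan is to prove Theorem~\ref{theOutdegree} by constructing an explicit coupling between the GPORT-with-immigration growth process and the modified P\'olya-Young urn, and matching the one-step conditional transition probabilities, exactly in the style of the preceding theorem on descendants.

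I would first verify the base case at tree time $N=j$: node $j$ has just been inserted, so its outdegree is $X_{j,j}=0$, which corresponds to the urn state $W_0=w_0=\alpha$ under the identification $X_{N,j}=W_{N-j}-\alpha$. The initial total ball count is $T_0=w_0+b_0=C_j$, which by the choice $b_0=C_j-\alpha$ is precisely the total attachment weight of the forest at tree time $j$.

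For the inductive step, at the $(m+1)$-th urn step (tree-step $N+1=j+m+1$) the GPORT attachment rule~\eqref{eqn:incTree1} gives
\[
\P\bigl(X_{N+1,j}=X_{N,j}+1\,\big|\,\field_N\bigr)=\frac{X_{N,j}+\alpha}{C_N}=\frac{W_m}{C_{j+m}},
\]
whereas in the urn the probability of drawing white, hence of incrementing $W_m$ by one, is $W_m/T_m$. The crucial identity to verify is therefore $T_m=C_{j+m}$ for all $m\ge 0$, which I would prove by induction on $m$: the base case is $T_0=C_j$ as above, and the inductive step uses that the urn's total ball increment at step $m+1$ coincides with $C_{j+m+1}-C_{j+m}$, namely $\sigma=1+\alpha$ at non-root-creating steps and $\sigma+\ell=1+\alpha+\ell$ at root-creating steps $j+m+1\equiv 0\pmod p$. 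This matching requires phasing the urn so that $M_p$ is applied at urn-step $m+1$ exactly when a new root is created in the tree. Combined with the observation that at each step the outdegree $X_{j+m,j}$ can only remain the same or increase by one, and likewise for $W_m$, the two transition kernels coincide; this propagates through $m$ to the equality in law of the Markov chains $(X_{j+m,j})_{m\ge 0}$ and $(W_m-\alpha)_{m\ge 0}$ and hence to the statement.

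The main obstacle is the bookkeeping required for the phase alignment of the urn's replacement-matrix schedule with the tree's root-creation schedule (which depends on $j\bmod p$) and the precise verification of the balance identity $T_m=C_{j+m}$ consistent with the stated matrices $M_1,\dots,M_p$. Once this is settled, the remainder of the argument is the direct analogue of the descendants theorem and reduces to induction on $m$.
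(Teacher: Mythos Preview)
Your approach is essentially identical to the paper's: both verify the base case $X_{j,j}=0$ and then match the one-step conditional probability $\P\{X_{N+1,j}=m+1\mid X_{N,j}=m\}=(m+\alpha)/C_N$ against the urn's white-draw probability $W_m/T_m$, with the initial conditions $w_0=\alpha$, $b_0=C_j-\alpha$. The paper's proof is extremely terse---just two displayed lines and the phrase ``we readily observe the stated connection''---whereas you spell out the coupling, the induction $T_m=C_{j+m}$, and the phase-alignment issue; all of this is implicit in the paper and your more careful bookkeeping is warranted, particularly the observation that the urn's schedule must be shifted according to $j\bmod p$.
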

\begin{proof}
By~\eqref{eqn:incTree1} we have $X_{j,j}=0$ and 
\[
\P\{X_{N+1,j}=m+1|X_{N,j}=m\}= \frac{m+\alpha}{C_N}.
\]
Thus, we readily observe the stated connection to the urn model with initial conditions $w_0=\alpha$ and $b_0=C_j-w_0$.
\end{proof}

This guides us to study the following model of periodic urns. 
\begin{defi}[Periodic triangular urns]
Let $p\in\N$ and $\sigma,\ell_1,\ell_2>0$ denote positive reals. A periodic triangular urn of period $p$, with parameters $\sigma,\ell_1,\ell_2$ 
is determined by the replacement matrices
\[
M_1=M_2=\dots=M_{p-1}=
\left(
\begin{matrix}
\sigma&\ell_1\\
0&\sigma+\ell_1\\
\end{matrix}
\right),
\quad\text{and }
M_{p}=
\left(
\begin{matrix}
\sigma&\ell_2\\
0&\sigma+\ell_2\\
\end{matrix}
\right).
\]
\end{defi}
For $\ell_1=\ell_2$ the model degenerates to an ordinary triangular urn. We emphasize that the total number of balls $T_N$ in a periodic triangular urns urn at time $N=np+k$, with $n\in\N_0$, $0\le k\le p-1$ is given by
\begin{equation}
\label{eq:TotalPerTria}
T_N=w_0+b_0+\big(n(p-1)+k\big)(\sigma+\ell_1) +n(\sigma+\ell_2)
=N\sigma_1 +n(\ell_2-\ell_1)+w_0+b_0,
\end{equation}
where
\begin{equation}
\label{eq:TotalPerTria2}
\sigma_1=\sigma+\ell_1.
\end{equation}

Next we collect the exact results for $W_N$, as well as the martingale structure.
\begin{lem}[Periodic triangular urns: distribution and almost sure limit]
\label{lem:Triangular}
The distribution of $W_N$ is determined by its 
rising factorial moments and its probability generating function, 
given verbatim as in Theorem~\ref{the1}. Let $N=np+k$, $0\le k< p$ and let $n\to\infty$.
\begin{equation}
\label{eq:Def_gNTriangular}
g_N= \prod_{j=0}^{N-1}\frac{T_{j}}{T_{j}+\sigma},\quad
g_N\sim \kappa \cdot N^{-\Lambda} ,\quad \kappa= p^{\Lambda}\prod_{r=0}^{p-1}\frac{\Gamma(\frac{r}\psi  +\frac{w_0+b_0}{\sigma_1\psi}+\frac{\sigma}{\sigma_1\psi})}{\Gamma(\frac{r}\phi +\frac{w_0+b_0}{\sigma_1 \psi})},
\end{equation}
with $T_N$ given by~\eqref{eq:TotalPerTria}, 
\[
\psi=p+\frac{\ell_2-\ell_1}{\sigma+\ell_1} \text{ and }\Lambda=\frac{p\sigma}{(\sigma+\ell_1)p+\ell_2-\ell_1}.
\]
The random variable $\mw=g_N\cdot W_N$ is a non-negative martingale 
and converges almost surely to a limit $\mW{\infty}$. 
Furthermore, the random variable $W_N/N^{\Lambda}$ satisfies 
\[
W_N/N^{\Lambda}\to \frac{1}{\kappa}\mW{\infty}. 
\]
\end{lem}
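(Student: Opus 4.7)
My plan is to recycle as much of the Theorem~\ref{the1} machinery as possible, since the only structural difference between the generalized P\'olya--Young urn and the periodic triangular urn lies in the off-diagonal entries of $M_1,\dots,M_{p-1}$, which affect the total count $T_N$ but not what happens to the white balls when a white ball is drawn. In every replacement matrix the first row adds exactly $\sigma$ white balls, so the conditional step relation
\[
\E(W_N\mid\field_{N-1})=\frac{T_{N-1}+\sigma}{T_{N-1}}W_{N-1}
\]
still holds. Taking this as the starting point, the martingale identity $\E(\mw\mid\field_{N-1})=\mW{N-1}$ for $\mw=g_N W_N$ with $g_N=\prod_{j=0}^{N-1}T_j/(T_j+\sigma)$ follows exactly as in~\eqref{eq:Martin1}.

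Next, to justify the rising-factorial-moment formula and the PGF expansion I would repeat the computation with $\binom{W_N/\sigma+s-1}{s}$ verbatim from the proof of Theorem~\ref{the1}: the Pascal identity and $\delta_N\in\{0,1\}$ with $\P(\delta_N=1\mid\field_{N-1})=W_{N-1}/T_{N-1}$ are unchanged, so one obtains
\[
\E\bigl(\auffak{(W_N/\sigma)}{s}\bigr)=\auffak{(w_0/\sigma)}{s}\prod_{j=0}^{N-1}\frac{T_j+s\sigma}{T_j},
\]
and the Lah-number expansion of the PGF around $v=1$ then goes through word for word.

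The genuine new work is the asymptotic analysis of $g_N$ under the new formula
$T_N=N\sigma_1+n(\ell_2-\ell_1)+w_0+b_0$ with $\sigma_1=\sigma+\ell_1$. I would split the product $g_{np+k}$ into an outer product over full periods $j=0,\dots,n-1$ and an inner product over $r=0,\dots,p-1$ (plus a finite correction for the partial block of length $k$), exactly as in~\eqref{eq:AsympG}. Inside a full period $T_{jp+r}/\sigma_1=j\psi+r+(w_0+b_0)/\sigma_1$ with $\psi=p+(\ell_2-\ell_1)/\sigma_1$, and the shift by $\sigma$ becomes $\sigma/\sigma_1\cdot\psi^{-1}$. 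Converting each inner product into a ratio of Gamma functions in $n$ and applying Stirling's formula~\eqref{eq:stirling} produces
\[
g_N\sim \kappa\,N^{-\Lambda},\qquad \Lambda=\frac{p\sigma}{p\sigma_1+\ell_2-\ell_1}=\frac{p\sigma}{\psi\sigma_1},
\]
with $\kappa$ the stated product of Gamma ratios. The partial block of length $k$ contributes only a bounded multiplicative factor absorbed into $\kappa$ via $n^{-\Lambda}\sim p^{\Lambda}N^{-\Lambda}$.

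Finally, almost-sure convergence of $\mw$ to some $\mW{\infty}$ follows from Doob's theorem for non-negative martingales, and combining this with $g_N\sim\kappa N^{-\Lambda}$ yields $W_N/N^{\Lambda}\to\mW{\infty}/\kappa$ almost surely. The only step that is not completely mechanical is checking that $\psi>0$ (needed for the Gamma arguments to be positive and for Stirling to apply): since $\sigma,\ell_1>0$ and $\sigma_1>\ell_1-\ell_2$ is ensured by tenability of the urn, this holds without further restriction, so the main obstacle is merely bookkeeping the Gamma-function manipulation, not a conceptual one.
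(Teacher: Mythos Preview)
Your proposal is correct and follows essentially the same route as the paper's proof. The paper is even terser on the first half: it simply invokes Remark~\ref{rem:universal1} to assert that the rising-factorial moments, the PGF, and the martingale structure of Theorem~\ref{the1} carry over verbatim (since only $T_N$ changes), and then concentrates---exactly as you do---on redoing the Gamma-function/Stirling asymptotics of $g_N$ with the new expression $T_N/\sigma_1=j\psi+r+(w_0+b_0)/\sigma_1$ inside each period, arriving at $\Lambda=p\sigma/(\sigma_1\psi)$ and $n^{-\Lambda}\sim p^{\Lambda}N^{-\Lambda}$.
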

\begin{remark}[Degenerated periodic triangular urns]
Setting $\ell_1=0$ and $\ell_2=\ell$ periodic triangular degenerate to P\'olya-Young urns. 
Thus, the results of Lemma~\ref{lem:Triangular} and the theorem stated below generalize our earlier results of Theorems~\ref{the1} and~\ref{the2}. Indeed, by $\ell_1=0$ and $\ell_2=\ell$ we readily re-obtain the previous values of $\psi$ and $\Lambda$: 
\[
\psi=p+\frac{\ell_2-\ell_1}{\sigma+\ell_1}=p+\frac{\ell}\sigma \text{ and }\Lambda=\frac{p\sigma}{(\sigma+\ell_1)p+\ell_2-\ell_1}
=\frac{p}{p+\ell/\sigma}.
\]
\end{remark}

\begin{theorem}[Periodic triangular urns: limit law and characterization]\label{theTriangular}
The raw moments of $W_N/N^{\Lambda}$ converge for $N\to\infty$. For $s\ge 1$ we have
\[
\E(W_N^s/\sigma^s N^{s\Lambda})\to \mu_s=p^{s\Lambda}\frac{\Gamma(s+\frac{w_0}\sigma)}{\Gamma(\frac{w_0}\sigma)}\prod_{r=0}^{p-1}\frac{\Gamma(\frac{r}\psi  +\frac{w_0+b_0}{\sigma_1\psi})}{\Gamma(\frac{r}\psi +\frac{w_0+b_0}{\sigma_1 \psi}+\frac{s\sigma}{\sigma_1\psi})}.
\]
The limit law $\frac{1}{\sigma\kappa}\mW{\infty}$ is uniquely determined by its moment sequence $(\mu_s)$. It has a density $f(x)$, $x>0$, given by
\[
f(x)=
\frac{\prod_{r=0}^{p-1}\Gamma(\frac{r}\psi +\frac{w_0+b_0}{\psi\sigma_1})}{\Gamma(\frac{w_0}\sigma)}
\cdot\sum_{j\ge 0}\frac{(-1)^j}{j!\prod_{\ell=0}^{p-1}\Gamma(\frac{\ell}\psi +\frac{b_0}{\sigma_1 \psi}-\frac{j\sigma}{\sigma_1\psi})}\ x^{j+w_0/\sigma-1}.
\]

For $\frac{\ell_2-\ell_1}{\sigma+\ell_1}\in\N_0$ the limit law can be decomposed into a product of a Mittag-Leffler law and generalized Gamma distributions, all being mutually independent:
\[
\frac{1}{\sigma\kappa}\mW{\infty}\law\psi^{\psi}\cdot\ML(\alpha,\beta,\gamma)\cdot
\prod_{r=p}^{\psi-1}\GG(a_r,b_r),
\]
with $\alpha=\frac{\sigma}{\sigma_1}$, $\beta=\frac{w_0}{\sigma_1}$ and $\gamma=\frac{b_0}{\sigma_1}$, 
as well as $a_r=\frac{r\sigma_1+w_0+b_0}{\sigma}$, $b_r=\frac{\sigma_1\psi}{\sigma}$.
In the general case $\frac{\ell_2-\ell_1}{\sigma+\ell_1}\notin\N_0$ the random variable can be presented in terms of the scaled local time $L$ of a noise-reinforced Bessel process with parameters $\alpha=p/\psi$ and $\beta =1/\psi$: let $T=\tilt_{1}(L)$, then
\[
\frac{1}{\sigma\kappa}\mW{\infty}=  \ML(\alpha,\beta,\gamma)\cdot \tilt_{\frac{w_0+b_0}{\sigma}-\frac{\sigma_1}{\sigma}+1}(T^{\frac{\sigma}{\sigma_1}}).
\]
\end{theorem}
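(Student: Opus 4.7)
My plan follows the blueprint of the proof of Theorem~\ref{the2}, using the exact rising factorial moments and martingale structure already supplied by Lemma~\ref{lem:Triangular}. Starting from
\[
\E(\auffak{(W_N/\sigma)}{s})=\auffak{(w_0/\sigma)}{s}\prod_{j=0}^{N-1}\frac{T_{j}+s\sigma}{T_{j}},
\]
with $T_N$ given by~\eqref{eq:TotalPerTria}, I would first write $N=np+k$ and rewrite $T_j/\sigma_1$ in terms of $\psi=p+(\ell_2-\ell_1)/\sigma_1$, so that the product over $j$ groups into an $n$-fold outer product of $p$ Gamma-ratio factors of arguments of the form $j+\tfrac{r}{\psi}+\tfrac{w_0+b_0}{\sigma_1\psi}$ (with and without the shift $\tfrac{s\sigma}{\sigma_1\psi}$), together with a bounded remainder for the $k$ leftover steps. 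Stirling's formula~\eqref{eq:stirling} gives the leading-order $n^{s\Lambda}$ with $\Lambda=p\sigma/(\sigma_1\psi)$, and then the conversion~\eqref{eq:risingToRaw} from rising factorials to raw moments via Stirling numbers of the second kind shows that the dominant contribution comes from the top term $r=s$, yielding exactly the stated $\mu_s$.

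Next I would verify that $(\mu_s)$ determines a unique distribution via Carleman's criterion, mirroring~\eqref{eq:Carleman}: applying Stirling's formula to $\mu_s$ one finds $\mu_s^{-1/(2s)}\sim c\,(e/s)^{(1-\Lambda)/2}$, and since $\Lambda\in(0,1)$ we have $1-\Lambda<1$, so $\sum_s \mu_s^{-1/(2s)}=\infty$. Convergence in distribution of $W_N/(\sigma N^{\Lambda})$ to the stated limit then follows from the Fréchet--Shohat moment convergence theorem, and almost-sure convergence of $\mW{N}$ is already supplied by the martingale structure of Lemma~\ref{lem:Triangular}. For the density formula, $\mu_{s-1}$ is a Mellin transform with simple poles exactly at $s=1-w_0/\sigma-j$, $j\ge 0$, coming from $\Gamma(s+w_0/\sigma)$, all other Gamma factors being analytic (and the reciprocal Gammas in the denominator being entire). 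Shifting the line of integration and summing residues produces the series representation of $f(x)$.

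For the product decomposition in the case $\psi\in\N$, I would apply the Gauss multiplication formula
\[
\prod_{k=0}^{m-1}\Gamma(z+\tfrac{k}{m}) = (2\pi)^{(m-1)/2}\,m^{\tfrac12-mz}\,\Gamma(mz)
\]
to extend the product in $\mu_s$ from $0\le r\le p-1$ up to $0\le r\le \psi-1$, thereby isolating a ratio of the form $\Gamma(s\sigma/\sigma_1+\beta)\Gamma(\beta+\gamma)/[\Gamma(\beta)\Gamma(\alpha s+\beta+\gamma)]$ with $\alpha=\sigma/\sigma_1$, $\beta=w_0/\sigma_1$, $\gamma=b_0/\sigma_1$, which is precisely the moment~\eqref{eq:momBML} of $\ML(\alpha,\beta,\gamma)$. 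The leftover block $p\le r\le \psi-1$ then contributes the generalized Gamma factors with the stated parameters $a_r=(r\sigma_1+w_0+b_0)/\sigma$ and $b_r=\sigma_1\psi/\sigma$, and independence of the factors follows from multiplicativity of moments.

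The general case is where I expect the main obstacle to sit, because the presence of the extra parameter $\sigma/\sigma_1$ inside the Gamma ratio (replacing the step $s$ by $s\sigma/\sigma_1$) breaks the direct match with Lemma~\ref{lem:PanKu2} that was used in the proof of Theorem~\ref{the2}. To overcome this I would set $\alpha=p/\psi$, $\beta=1/\psi$ in Lemma~\ref{lem:PanKu2}, so that $T=\tilt_1(L)$ has moments $\Gamma(s+1)\prod_{j=1}^{p}\Gamma(j/\psi)/\Gamma((s+j)/\psi)$, and then introduce the power $T^{\sigma/\sigma_1}$, whose raw $s$-moments coincide with those of $T$ evaluated at $s\sigma/\sigma_1$. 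A subsequent tilt by $(w_0+b_0)/\sigma-\sigma_1/\sigma+1$ using~\eqref{eq:tilt} realigns the front factor $\Gamma(s+w_0/\sigma)/\Gamma(w_0/\sigma)$ together with the $\psi$-shifts, while the $\ML(\alpha,\beta,\gamma)$ factor absorbs the Beta-type remainder. The bookkeeping step — verifying that the product of $\ML(\alpha,\beta,\gamma)$ with $\tilt_{(w_0+b_0)/\sigma-\sigma_1/\sigma+1}(T^{\sigma/\sigma_1})$ reproduces $\mu_s$ exactly — is the technical heart of the argument, and I would perform it by computing both moment sequences side by side and matching the Gamma factors term by term, using~\eqref{eq:momBML} for the Mittag-Leffler component and Lemma~\ref{lem:PanKu1}--\ref{lem:PanKu2} for the tilted local time.
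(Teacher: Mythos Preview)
Your proposal is correct and follows essentially the same approach as the paper's own proof: starting from the exact rising factorial moments of Lemma~\ref{lem:Triangular}, expressing the product as Gamma ratios via the decomposition $N=np+k$ and Stirling's formula, converting to raw moments via~\eqref{eq:risingToRaw}, verifying Carleman's criterion, extracting the density by inverse Mellin transform and residues, and then handling the two cases (integer $\psi$ via the Gauss multiplication formula to isolate the $\ML(\alpha,\beta,\gamma)$ and generalized Gamma factors; general $\psi$ via the power $T^{\sigma/\sigma_1}$ of the tilted local time followed by a further tilt). The paper's proof is slightly more terse in places (it simply refers back to the proof of Theorem~\ref{the2} for the density and Carleman steps), but the substance and order of the argument are identical to what you outline.
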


\begin{remark}
We note that second order limit theorems for $W_N$ can also be obtained by considering 
the martingale tail sum for $\mw=g_N\cdot W_N$, similar to the P\'olya-Young urn.
\end{remark}

\begin{proof}[Proof of Lemma~\ref{lem:Triangular}]
Our starting point is Theorem~\ref{the1}, whose results are valid for the periodic triangular urns,
except for the asymptotics (see also Remark~\ref{rem:universal1}). 
Thus, we need the asymptotics $g_N$, with $T_N$ given by~\eqref{eq:TotalPerTria}.
We observe that
\[
T_N/\sigma_1=N +n\frac{\ell_2-\ell_1}{\sigma_1}+\frac{w_0+b_0}{\sigma_1},\quad N=np+k, \quad 0\le k\le p-1.
\]
Using the notation $\psi=p+\frac{\ell_2-\ell_1}{\sigma_1}$, we obtain
\begin{equation}
\begin{split}
\label{eq:AsympGTriangular1}
g_N&
=\bigg(\prod_{j=0}^{n-1}\prod_{r=0}^{p-1}\frac{j p+r + j\frac{\ell_2-\ell_1}{\sigma_1})+\frac{w_0+b_0}{\sigma_1}}{j p+r + j\frac{\ell_2-\ell_1}{\sigma_1})+\frac{w_0+b_0}{\sigma_1}+\frac{\sigma}{\sigma_1}}\bigg)
\cdot \prod_{r=0}^{k-1}\frac{np+r+n\frac{\ell_2-\ell_1}{\sigma_1}+\frac{w_0+b_0}{\sigma_1}}{np+r+n\frac{\ell_2-\ell_1}{\sigma_1}+\frac{w_0+b_0}{\sigma_1}+\frac{\sigma}{\sigma_1}}\\
&=\bigg(\prod_{j=0}^{n-1}\prod_{r=0}^{p-1}\frac{j +\frac{r}{\phi} + \frac{w_0+b_0}{\sigma_1\phi}}{j +\frac{r}{\phi} +\frac{w_0+b_0}{\sigma_1\phi}+\frac{\sigma}{\sigma_1\phi}}\bigg)
\cdot \prod_{r=0}^{k-1}\frac{n+\frac{r}{\phi} +\frac{w_0+b_0}{\sigma_1\phi}}{n+\frac{r}{\phi}+\frac{w_0+b_0}{\sigma_1\phi}+\frac{\sigma}{\sigma_1\phi}}.
\end{split}
\end{equation}

Writing the products in terms of the Gamma function gives
\begin{equation}
\begin{split}
\label{eq:AsympGTriangular2}
g_N&
=\bigg(\prod_{r=0}^{p-1}\frac{\Gamma(n+\frac{r}\psi +\frac{w_0+b_0}{\sigma_1 \phi})\Gamma(\frac{r}\phi  +\frac{w_0+b_0}{\sigma_1\phi}+\frac{\sigma}{\sigma_1\phi})}{\Gamma(\frac{r}\phi +\frac{w_0+b_0}{\sigma_1 \phi})\Gamma(n+\frac{r}\phi  +\frac{w_0+b_0}{\sigma_1\phi}+\frac{\sigma}{\sigma_1\phi}})\bigg)
\cdot \prod_{r=0}^{k-1}\frac{n+\frac{r}\phi +\frac{w_0+b_0}{\sigma_1 \phi}}{n+\frac{r}\phi  +\frac{w_0+b_0}{\sigma_1\phi}+\frac{\sigma}{\sigma_1\phi}}.
\end{split}
\end{equation}
Using Stirling's formula for the Gamma function~\eqref{eq:stirling}, 
we obtain the stated asymptotics, noting that
\[
p\cdot\frac{\sigma}{\sigma_1\phi}=\frac{p\sigma}{(\sigma+\ell_1)p+\ell_2-\ell_1},\quad
n^{-\Lambda}\sim p^{\Lambda}N^{-\Lambda}.
\]
Finally, we note that the asymptotics of $g_N$ imply
the almost sure convergence of $W_N/N^{\Lambda}$.
\end{proof}

\begin{proof}[Proof of Theorem~\ref{theTriangular}]
By Lemma~\ref{lem:Triangular}, see also Theorem~\ref{the1}, the rising factorial moments are given by
\[
\E(\auffak{(W_N/\sigma)}{s})=\auffak{(w_0/\sigma)}{s}\prod_{j=0}^{N-1}\frac{T_{j}+s\sigma}{T_{j}}.
\]
The asymptotics of $\prod_{j=0}^{N-1}\frac{T_{j}+s\sigma}{T_{j}}$ are readily obtained similar to the asymptotics of $g_N$~\eqref{eq:Def_gNTriangular} using Stirling's formula for the Gamma function~\eqref{eq:stirling}:

\begin{equation}
\begin{split}
\label{eq:AsympGTriangular3}
\prod_{j=0}^{N-1}\frac{T_{j}+s\sigma}{T_{j}}
&=\bigg(\prod_{r=0}^{p-1}\frac{\Gamma(n+\frac{r}\psi +\frac{w_0+b_0}{\sigma_1 \psi}+\frac{s\sigma}{\sigma_1\psi})\Gamma(\frac{r}\psi  +\frac{w_0+b_0}{\sigma_1\psi})}{\Gamma(\frac{r}\psi +\frac{w_0+b_0}{\sigma_1 \psi}+\frac{s\sigma}{\sigma_1\psi})\Gamma(n+\frac{r}\psi  +\frac{w_0+b_0}{\sigma_1\psi})}\bigg)\\
&\quad \times
 \prod_{r=0}^{k-1}\frac{n+\frac{r}\psi +\frac{w_0+b_0}{\sigma_1 \psi}+\frac{s\sigma}{\sigma_1\psi}}{n+\frac{r}\psi  +\frac{w_0+b_0}{\sigma_1\psi}}\sim n^{s\lambda}\mu_s = \frac{N^{s\lambda}}{p^{s\lambda}}\cdot\mu_s.
\end{split}
\end{equation}
The density of the limit law is obtained using an identical argument as in the proof of Theorem~\ref{the2} and therefore omitted. 
Similarly, Carleman's criterion is readily checked. Next we turn to the identification of the limit law. 
For $\frac{\ell_2-\ell_1}{\sigma+\ell_1}\in\N_0$ such that $\psi\in\N$ we extend the range of the product to $r=\psi-1$. 
This implies that
\begin{align*}
\mu_s&=  \sigma^s \frac{\Gamma(s+\frac{w_0}\sigma)}{\Gamma(\frac{w_0}\sigma)}
\cdot \frac{\Gamma(\frac{w_0+b_0}{\sigma_1})}{\Gamma(\frac{w_0+b_0+s \sigma}{\sigma_1})}
\psi^{s\psi} \prod_{r=p}^{\psi-1}\frac{\Gamma(\frac{r}\psi +\frac{w_0+b_0}{\psi\sigma_1}+\frac{s \sigma}{\sigma_1\psi})}{\Gamma(\frac{r}\psi +\frac{w_0+b_0}{\psi\sigma_1})}.
\end{align*}
The factors in the product are readily identified as generalized Gamma distributions with 
parameters $a_r=\frac{r\sigma_1+w_0+b_0}{\sigma}$, $b_r=\frac{\sigma_1\psi}{\sigma}$.
For the two prefactors
\[
\frac{\Gamma(s+\frac{w_0}\sigma)}{\Gamma(\frac{w_0}\sigma)}
\cdot \frac{\Gamma(\frac{w_0+b_0}{\sigma_1})}{\Gamma(\frac{w_0+b_0+s \sigma}{\sigma_1})}
\]
we use~\eqref{eq:momBML} and identify the limit law as a three parameter Mittag-Leffler distribution
with $\alpha=\frac{\sigma}{\sigma_1}$, $\beta=\frac{w_0}{\sigma_1}$ and $\gamma=\frac{b_0}{\sigma_1}$.

\smallskip

In the general case we use gain the random variable $T$ of Lemma~\ref{lem:PanKu2}, but taken to the power $\sigma/\sigma_1$:
\[
\E(T^{\frac{\sigma s}{\sigma_1}})=\Gamma(\frac{\sigma s}{\sigma_1}+1)\cdot\prod_{j=1}^{m}\frac{\Gamma(\frac{j\alpha}{m})}{\Gamma(\frac{s \sigma/\sigma_1+j)\alpha}{m})},\quad s\ge 1.
\]
We set $\alpha=p/\psi=p/(p+\frac{\ell_2-\ell_1}{\sigma+\ell_1})$, where $0<\alpha<1$, as $\ell_2\neq \ell_1$.
Next we use $m=p$, such that $\beta=\alpha/p=\frac1\psi$, and obtain 
\[
\E(T^{\frac{\sigma s}{\sigma_1}})=\Gamma(\frac{\sigma s}{\sigma_1}+1)\cdot\prod_{j=1}^{p}\frac{\Gamma(\frac{j}{\psi})}{\Gamma(\frac{(\sigma s/\sigma_1+j)}{\psi})},\quad s\ge 1.
\]
We tilt the moment sequence by $\frac{w_0+b_0}{\sigma}-\frac{\sigma_1}{\sigma}$ to obtain the sequence
\[
\frac{\Gamma(\frac{\sigma s}{\sigma_1}+\frac{w_0+b_0}{\sigma_1})}{\Gamma(\frac{w_0+b_0}{\sigma_1})}\cdot\prod_{j=1}^{p}\frac{\Gamma(\frac{j-1}{\psi} 
+\frac{w_0+b_0}{\sigma_1\psi})}{\Gamma(\frac{(s\sigma/\sigma_1+j-1)}{\psi} 
+\frac{w_0+b_0}{\sigma_1\psi})},\quad s\ge 1.
\]
Finally, in order to obtain the sequence $(\mu_s)$, we observe that 
a three parameter Mittag-Leffler distribution $X$
with $\alpha=\frac{\sigma}{\sigma_1}$, $\beta=\frac{w_0}{\sigma_1}$ and $\gamma=\frac{b_0}{\sigma_1}$, such that
\[
\frac{\Gamma(\frac{\sigma s}{\sigma_1}+\frac{w_0+b_0}{\sigma_1})}{\Gamma(\frac{w_0+b_0}{\sigma_1})}\cdot
\E(X^s) = \frac{\Gamma(s+\frac{w_0}\sigma)}{\Gamma(\frac{w_0}\sigma)}.
\]
\end{proof}

\section{Periodic Stirling permutations}
\label{sec:Stir}
Stirling permutations were introduced by Gessel and Stanley~\cite{GessStan1978}. A Stirling permutation is a permutation of
the multiset $\{1, 1, 2, 2, \dots , n, n\}$ such that, for each $i$, $1\le i \le n$, the elements occurring between the two
occurrences of $i$ are larger than $i$. E.g., $1122$, $1221$ and $2211$ are Stirling permutations, whereas the permutations $1212$ and $2112$ of $\{1, 1, 2, 2\}$ aren't. The name of these combinatorial objects is due to relations with the Stirling numbers, see~\cite{GessStan1978} for details. A natural generalization of Stirling permutations is to consider permutations of
a more general multiset $\{1^{d}, 2^{d}, \dots, n^{d}\}$, with $d\ge 2$, which are called $d$-Stirling permutations.
Note that $d=2$ yields exactly Stirling permutations, whereas setting $d=1$ gives just ordinary permutations. This class of $d$-Stirling permutations has been introduced already by Park~\cite{Park1994a,Park1994c,Park1994b} under the name $d$-multipermutations.
We note that by the Janson-Koganov bijection~\cite{Koganov1996,Jan2008} Stirling permutation are relation to plane-oriented recursive trees. 
The bijection of Gessel-Janson-Kuba-Panholzer~\cite{Gess2020,JKP2011} relates $d$-Stirling permutations with $(d+1)$-ary increasing trees; see also~\cite{Gess2020,JKP2011,KuPa2011Stir} for additional bijections, references to the literature and further information.

\smallskip

Let $p$ and $t$ denote positive integers, where $p\ge 2$. In the following we introduce periodic $d$-Stirling permutations with period $p$ and thickness $t$. There, after insertion steps $p$, $2p$, \dots additional $t$ boldfaced labels appear, 
so labels $p$, $2p$, etc. are thicker than there ordinary counterparts. 
As we will see in the following, these new boldfaced labels will relax the condition that the elements occurring between the two
occurrences of $i$ are larger than $i$. First, we collect a few basics. The order of a periodic $d$-Stirling permutation $\sigma$ is the number of different labels appearing in permutation. 
Let $N=np+k$, $0\le k<p$, $n\ge 0$. A periodic $d$-Stirling permutations of with thickness $t$ order $N$ 
has $N-n$ labels $\{1,\dots,N\}\setminus\{p,2p,\dots,np\}$ appearing $d$ times and $n$ labels $p,2p,\dots,np$ appearing $d+t$ times. 
Thus, the number of insertion places for label $N+1$ is given by
\[
q_N=N\cdot d +1 + n\cdot t
\]
Consequently, there are 
\[
Q_N=\prod_{j=1}^{N-1}q_j
\]
different periodic $d$-Stirling permutations with $N$ labels, period $p$, and thickness $t$.

\smallskip

Next we define the process generating all periodic $d$-Stirling permutation of order $N$ in a step-by-step fashion. 
\begin{defi}
Periodic $d$-Stirling permutation are created by the following procedure.
\begin{itemize}
	\item Step 1: we start with the string $1^d=11\dots 1$. 
	\item Step $N+1$, with $N\ge 1$: Given a permutation $\sigma$ of order $N$, we insert the complete string $(N+1)^d$ at any of the $q_N$ possible insertion places, obtaining a permutation $\tau$ of order $N+1$: insertion	in between two neighboring labels $\sigma_{m},\sigma_{m+1}$, $1\le m\le q_N-1$, creates
	\[
	\tau=\sigma_{1}\dots\sigma_{m}(N+1)^{d}\sigma_{m+1}\dots \sigma_{q_N},
	\]
	or on the left, $\tau=(N+1)^{d}\sigma$, or on the right, $\tau=\sigma (N+1)^{d}$.
	\item Step $N+1$: if $(N+1) \mod p=0$, then additionally put a (boldfaced/thick) String $(N+1)^{t}$ on the right of $\sigma$. 
\end{itemize}
\end{defi}
We note that for $t=0$ we re-obtain the usual $d$-Stirling permutations. For even $t$, we can add half of the labels on the left and half on the right for a more symmetric permutation. 
\begin{remark}[Creating random periodic $d$-Stirling permutation]
If we enter each new label $N+1$, with $N\ge 1$, uniformly at random according to the $q_N$ available positions, this gives a growth process generating random periodic $d$-Stirling permutation.
\end{remark}
\begin{remark}
We note that for each ordinary label $i$, $1\le i \le N$, the elements occurring between the two
occurrences of $i$ are still larger than $i$. However, if at least one label $i$ is boldfaced, and thus created at times $p$, $2p$, etc., 
then this rule is not valid. 
\end{remark}

\begin{example}
We consider a periodic Stirling permutation with $p=2$ and thickness $t=3$. 
The first few permutations are as follows: size one $11$, size two $2211{\boldsymbol{222}}$, $1221{\boldsymbol{222}}$, $1122{\boldsymbol{222}}$; here, the extra labels are printed in boldface. For size three permutations we have now eight different positions for insertion of $3^2$, compared to only five different positions for ordinary Stirling permutations. 
\end{example}

Next, we relate periodic $d$-Stirling permutations with period $p$ and thickness $t$ to increasing trees with immigration. 
We use $(d+1)$-ary increasing trees with the same parameter $p$, and use for the connectivity of the additional roots $\ell=t$. These
new roots are realized as special trimmed $t+1$-ary roots, where the first insertion place is inactive, compare with Figure~\ref{fig:Stir}.

\begin{theorem}
\label{bij:StirInc}
Periodic $k$-Stirling permutations with period $p\ge 2$ and thickness $t\ge 1$ are in bijection with increasing trees with immigration 
of period $p$ and parameter $\ell=t$. 
\end{theorem}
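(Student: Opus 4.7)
The plan is to extend the classical Gessel–Janson–Kuba–Panholzer bijection (between $d$-Stirling permutations and $(d+1)$-ary increasing trees) to the periodic setting by matching the two growth processes step by step. The first and cleanest check is a numerical one: at size $N = np+k$, the number of insertion places for the next ordinary label in a periodic $d$-Stirling permutation is $q_N = Nd + 1 + nt$, while on the tree side, for $(d+1)$-ary increasing trees with immigration (so $\sigma = d$, $\kappa = 1$ in~\eqref{eqn:Connectivity2}) with $\ell = t$, the total connectivity is $C_N = n(p\sigma + \ell) + k\sigma + \kappa = Nd + nt + 1$. Hence $q_N = C_N$ for all $N$, which is the evidence that such a bijection can exist.

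Next I would define the bijection $\Phi$ inductively along the growth process. The base case pairs the permutation $1^d$ with the one-node tree whose root is an ordinary $(d+1)$-ary node labeled $1$, and the $d+1$ gaps around the $d$ copies of $1$ are put in canonical correspondence with the $d+1$ empty child-slots of the root. In the inductive step at size $N+1$, I fix once and for all a scan order (say left-to-right) that linearly orders the slots of the current permutation, and a matching canonical order (say depth-first / children-from-left-to-right) on the free slots of the current forest. When $N+1 \not\equiv 0 \pmod p$, inserting the block $(N+1)^d$ at the $j$-th slot of the permutation corresponds to attaching an ordinary $(d+1)$-ary node $N+1$ at the $j$-th free slot of the forest; the used slot is replaced by the $d+1$ slots created by the new block on one side and by the $d+1$ children slots of the new node on the other, in matching order. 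When $N+1 \equiv 0 \pmod p$, I do exactly the same for the $d$ ordinary copies, and then simultaneously append the boldface string $(N+1)^t$ on the right of the permutation and create the new immigrating root labeled $N+1$ with $\ell = t$ free slots, again matching the $t$ newly created gaps on the permutation side to the $t$ child-slots of the new special root in their canonical order.

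I would then prove by induction on $N$ that $\Phi$ is well-defined and that the slot bijection is preserved; the equality $q_N = C_N$ from Step 1 guarantees that the number of slots stays synchronized, and each growth move modifies them in a parallel way. Invertibility follows from reversibility of the growth: given a periodic Stirling permutation of order $N+1$, one locates the largest label, recognizes whether it is ordinary or comes from a step $N+1 \equiv 0 \pmod p$ (in which case the boldface $t$-string of $N+1$'s sits to the right of the $d$-block), removes it, and reads off the corresponding tree deletion; the analogous procedure on the tree side is equally canonical. Finally, counting both classes at size $N$ gives $\prod_{j=1}^{N-1} q_j$, in agreement on both sides, which shows $\Phi$ is bijective at every level. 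The main obstacle is purely bookkeeping: to make the slot correspondence precise at the immigration step, one must be careful that the $t$ new gaps produced by appending $(N+1)^t$ really are placed into bijection with the $t$ children slots of the special root (rather than $t+1$ of them, since the leftmost would-be gap is already identified with a gap of the existing permutation); choosing the canonical orderings consistently handles this, but is the one place where the proof needs to be written out with care rather than appealing directly to the classical bijection.
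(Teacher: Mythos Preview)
Your proposal is correct. The paper takes a different, more \emph{static} route: rather than matching the two growth processes step by step, it defines the bijection in one shot via the contour (depth-first walk) encoding. Given the forest of $(d+1)$-ary increasing trees with exterior nodes, one walks around the contour and, between the $d+1$ outgoing edges of each node $v$, records $d$ copies of the label $v$; the resulting word is the periodic $d$-Stirling permutation. The special trimmed $(t+1)$-ary roots are handled implicitly by the same walk.

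The two approaches are really two faces of the same construction: the ``canonical depth-first order on free slots'' you invoke is precisely the contour order, so your inductive slot-matching is the dynamic version of the paper's static encoding. Your approach has the merit of making the immigration step fully explicit --- in particular you isolate exactly the delicate bookkeeping point (that appending $(N+1)^t$ on the right creates $t$ new gaps, not $t+1$, matching the $t$ active slots of the trimmed root), which the paper's short proof leaves to the reader and to the accompanying figure. The contour description, on the other hand, gives the bijection as an explicit closed-form map, which is cleaner to state and makes inversion immediate without an inductive peeling argument.
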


\begin{figure}[!htb]
\includegraphics[scale=0.7]{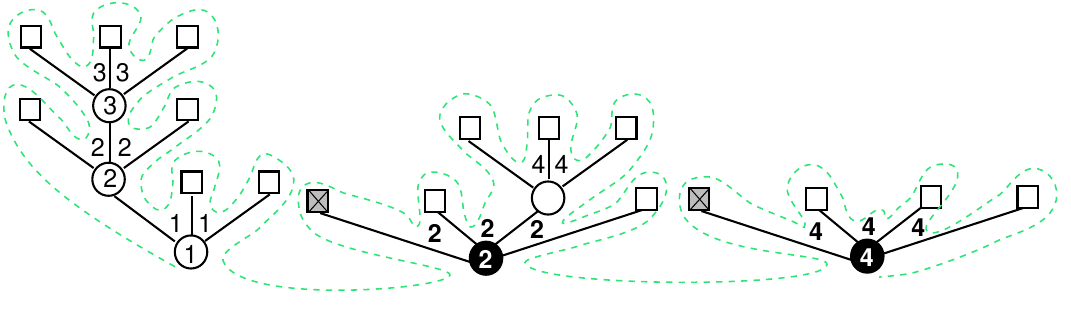}%
\caption{A periodic Stirling permutation $\sigma=233211\boldsymbol{22}44\boldsymbol{2}\boldsymbol{444}$ of order $N=4$, with period $p=2$ and thickness $t=3$ and the corresponding ternary increasing tree with immigration, obtaining by using the depth-first walk (a contour)}%
\label{fig:Stir}
\end{figure}

\begin{proof}
We use a depth-first walk of a rooted plane tree, starting at the root with the smallest label. We goes first to the leftmost child of the
root, explores that branch (recursively, using the same rules),
returns to the root, and continues with the next child of the
root, until there are no more children left. Then, we proceed with the remaining
We use a version of $(d+1)$-ary increasing trees with exterior nodes. 
Hence, at any time, any (interior) node has $d+1$
children, some of which may be exterior nodes. Between these $d+1$ edges
going out from a node labelled $v$, we place $d$ integers $v$. 
(Exterior nodes have no children and no labels.)
Now we perform the depth-first walk and code the $(d+1)$-ary increasing
tree by the sequence of the labels visited as we go around the tree 
(one may think of actually going around the tree like drawing
the contour). 
In other words, we add label $v$ to the code the $d$ first times we
return to node $v$, but not the first time we arrive there or the last
time we return. 
\end{proof}
 
We are interested in the distribution of the number 
of \emph{blocks} in a random periodic $d$-Stirling permutation $\sigma$ of order $N$. 
A block is a substring $a_{r}\dotsm a_{s}$ with $a_r=a_s$ of $\sigma$ that is maximal, i.e.~not contained in any larger
such substring. The size of a block is the number of labels creating the block, in our example $s+1-r$. There is obviously at most one block for every ordinary label $j=1,\dots,N$, extending from the first occurrence of $j$ to the last, where we distinguish between ordinary labels $1,2,\dots$, versus their thick counterparts, appearing periodically. By acclamation, the thick labels create blocks of minimal length $t$ on their on. We say that an ordinary label $j$ forms a block when this substring really is a block, i.e.~when it
is not contained in a string $i\dotsm i$ for some $i<j$.
In particular, $j$ forms a block if for any $i$ with $1\le i \le j-1$, there do not exist
indices $m_0,\dots m_{s+1}$ , with $1\le m_0<\dots<m_{s+1}\le q_N$, such that 
$\sigma_{m_0}=\sigma_{m_{s+1}}=i$ and
$\sigma_{m_1}=\dots=\sigma_{m_s}=j$.
It is easily seen by induction that any $\sigma$ has a unique
decomposition as a sequence of its blocks.
Note that if we add a string $(N+1)^{d}$ to a periodic $d$-Stirling permutation, this string
will either be swallowed by one of the existing blocks, or form a block on its own; the latter happens when it is added first,
last, or in a gap between two blocks.

\begin{example}
The periodic Stirling permutation $\sigma=233211\boldsymbol{22}44\boldsymbol{2}\boldsymbol{444}$ of order $N=4$
has block decomposition $\sigma=[2332][11][\boldsymbol{22}44\boldsymbol{2}][\boldsymbol{444}]$, 
one block of size four, one block of size two, one block of size four
and a block of size five.
\end{example}

By the bijection in Theorem~\ref{bij:StirInc} the size of the block formed by label one correspond to 
a restricted number of descendants of the original root, where the two outmost branches are neglected. 
Thus, Theorem~\ref{the1} and also Theorem~\ref{the2} applies and we obtain also obtain second order asymptotics for the size of the block $X_N$ formed by label one in a random periodic $d$-Stirling permutation. Actually, for $d>2$ one can do more, as the block generated by one has the form
\[
1\, X_{N,1} \,1\,X_{N,2}\,1\, \dots 1\, X_{N,d-1}\, 1
\]
has at $d-1\ge 2$ different, possibly empty, subblocks. They satisfy the identity
\[
X_N=d+\sum_{j=1}^{d-1}X_{N,j}.
\]
The distribution of these subblocks $X_{N,j}$ is exchangeable, and the (limiting) distribution can be described by our multivariate P\'olya-Young urn.

\smallskip 

The total number $S_N$ of blocks in a random periodic $d$-Stirling permutation is described by another urn
model. We use a periodic triangular urn
with period $p$ and replacement matrices
\[
M_1=M_2=\dots=M_{p-1}=
\left(
\begin{matrix}
1&d-1\\
0&d\\
\end{matrix}
\right),
\quad\text{and }
M_{p}=
\left(
\begin{matrix}
1&d-1+t\\
0&d+t\\
\end{matrix}
\right).
\]
We start with $w_0=2$ white balls and $d-1$ black balls, as in the beginning there are two positions where a new block can be created in $1^d$, and $d-1$ positions for entering $2^d$ into the block formed by the ones. The total number of blocks $S_N$ is related to $W_N$ by
\[
S_N=W_N-1+n,
\]
as for $N=np+k$ we created $n$ blocks with the boldfaced labels. Consequently, Lemma~\ref{lem:Triangular}, as well as Theorem~\ref{theTriangular}, can be applied 
leading to the limit law for $S_N$.

\section{Chinese restaurant process with competition}
The Chinese restaurant process with parameters $a$ and $\theta$ is a discrete-time stochastic process.
Its value at discrete time $n\in\N$ is one of the $B_n$ partitions of the set $[n]=\{1, 2, 3,\dots , n\}$, see Pitman~\cite{Pitman,Pitman2006}. The two parameters $a$ and $\theta$ are real values, satisfying the constraints $0< a <1$ and $\theta >-a$. Here, the $B_n$ denotes the Bell numbers, counting the number of partitions of an $n$-element set $B_0=B_1=1$, $B_2=2$, $B_3=5$, etc.\footnote{See sequence \href{http://oeis.org/A000110}{A000110}} in OEIS. One fancifully imagines a Chinese restaurant with an infinite number of tables, and each round table has an infinite number of seats. In the beginning the first customer takes place at the first table. At each discrete time step a new customer arrives and either joins one of the existing tables, or he takes place at the next empty table in line.
Each table corresponds to a block of a random partition. In the beginning at time $n = 1$, the trivial partition $\{ \{1\} \}$ is obtained with probability 1. Given a partition $T$ of $[n]$ with $|T|=m$ parts $t_i$, $1 \le i \le m \le n$, of sizes $|t_i|$. At time $n + 1$  the element $n + 1$ is either added to one of the existing parts $t_i\in T$ with probability
\[
\P\{n+1<_c t_i\}=\frac{|t_i|-a}{n+\theta},\quad 1\le i\le m,
\]
or added to the partition $T$ as a new singleton block with probability
\[
\P\{n+1<_c t_{|T|+1}\}=\frac{\theta+m \cdot a}{n+\theta}.
\]
This model thus assigns a probability to any particular partition $T$ of $[n]$. 
Above, the notation $n+1 <_c t_i$ stands for costumer $n+1$ connects to the table $t_i$.
Of classical interest is the total number of tables, as well the number of parts of size $j$ in a partition of $[n]$ generated by the Chinese restaurant process~\cite{KuPa2014,Pitman,Pitman2006}.
As pointed out in~\cite{KuPa2014}, this process can be embedded into a variant
of the growth process of generalized plane-oriented recursive trees with two different connectivity parameters $\ell>0$ and $\alpha>0$. Here, 
the root node $r$ attracts node with a probability proportional to $\ell$ plus its outdegree $d(r)$, and the other nodes attract with $\alpha$ instead of $\ell$. This allows to study properties of the Chinese restaurant process using analytic combinatorial tools, 
see~\cite{BKW2022,KuPa2014} for results. 

\begin{figure}[!htb]
\begin{center}
\includegraphics[width=.98\textwidth]{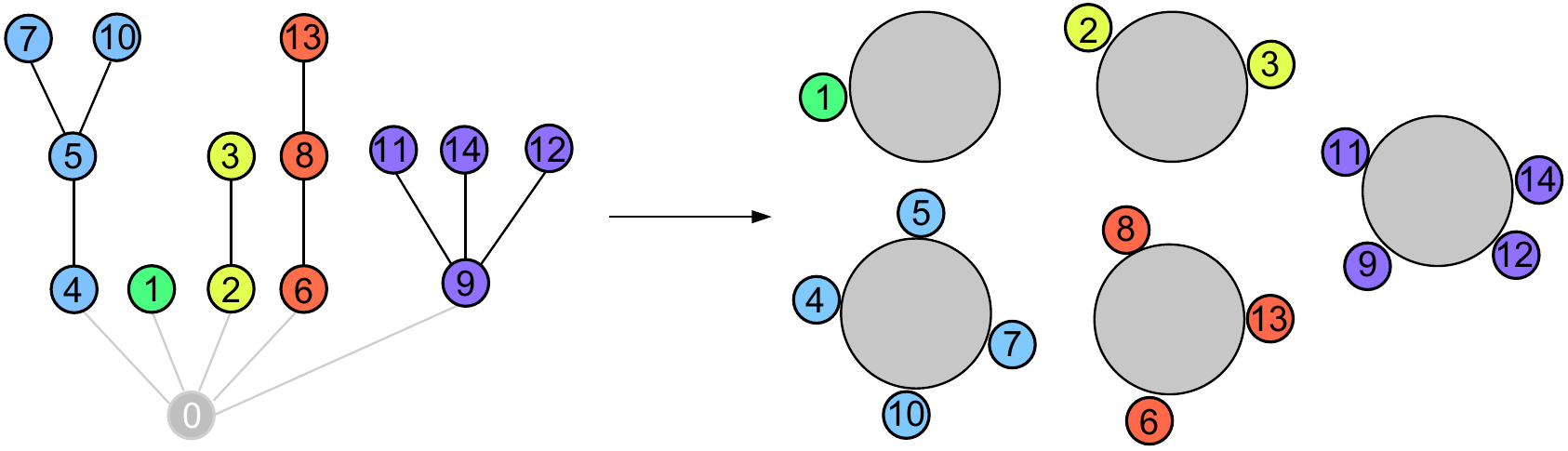} 
\end{center}
\caption{A plane-oriented recursive tree of size $15$, where the root node labeled zero has one size-one, one size-two, one size-three, and two size-four branches, as well as the corresponding table structure in the Chinese restaurant model.}
\label{fig:PortSubtreeSizes1}
\end{figure}


\smallskip

\subsection{Competition\label{subsec:comp}}
We introduce a generalization of the chinese restaurant process. In the chinese restaurant process with competition at each time step $N$ being a multiple of $p$ a new restaurant opens and attracts customers. Thus, from time one to time $p$ the process behaves just as the ordinary chinese restaurant process. Starting with time $p+1$, the probabilities change, as customers may sit in old restaurant, as well as the new restaurant(s). Especially at time $p+1$, a new customer may either go to the first restaurant, taking a table for himself or joining an existing group, or go to the just opened new restaurant, taking a table for himself. 

\begin{defi}[Chinese restaurant process with competition]
Let $p\in\N$ denote the duration, the time it takes to create a new restaurant and $N=np+k$, with $n\in\N$, $0\le k< p$. Furthermore, let
\[
c_N=np+k+(n+1)\theta=N+(n+1)\theta.
\]
\begin{itemize}
	\item At time $N=1$ the trivial partition $\{ \{1\} \}$ is obtained with probability 1.
	\item Time $N=np+k$: Given a partition $T$ of $[N]$ with $|T|=m$ parts $t_i$, $1 \le i \le m \le N$, of sizes $|t_i|$. At time $N + 1$  the element $N + 1$ is either added to one of the existing parts $t_i\in T$ with probability
\[
\P\{N+1<_c t_i\}=\frac{|t_i|-a}{c_N},\quad 1\le i\le m,
\]
or added to the partition $T$ as a new singleton block with probability
\[
\P\{N+1<_c t_{|T|+1}\}=\frac{(n+1)\theta+m \cdot a}{c_N}.
\]	
\end{itemize}
\end{defi}
\begin{remark}[Competition and singletons]
The creation of new restaurants and the competition is hidden in the value of $N$ and $n$. 
For example, at time $N=p+1$ the probability of a singleton jumps from being proportional to $\theta+m\cdot a$ 
to being proportional to $2\theta+m\cdot a$, as the new restaurant also attracts customers.
\end{remark}
\begin{remark}[No competition]
We can readily reobtain the ordinary restaurant process by setting $n=0$ and $p$ tending to infinity such that $np=0$, as no new restaurants are being created. Then $c_N=k+\theta$, $0\le k<\infty$, with $k$ taking the role of the discrete time, similar to $n$ in the ordinary setting.
\end{remark}
\begin{remark}[Cocktails]
It is also possible to add a so-called cocktail bar to the model (see~\cite{Moehle2021}), which may also attract customers. 
This corresponds then to increasing trees with immigration, starting with two root nodes. See Subsection~\ref{subsec:bar} for details. 
\end{remark}

\begin{figure}[!htb]
\begin{center}
\includegraphics[scale=0.55]{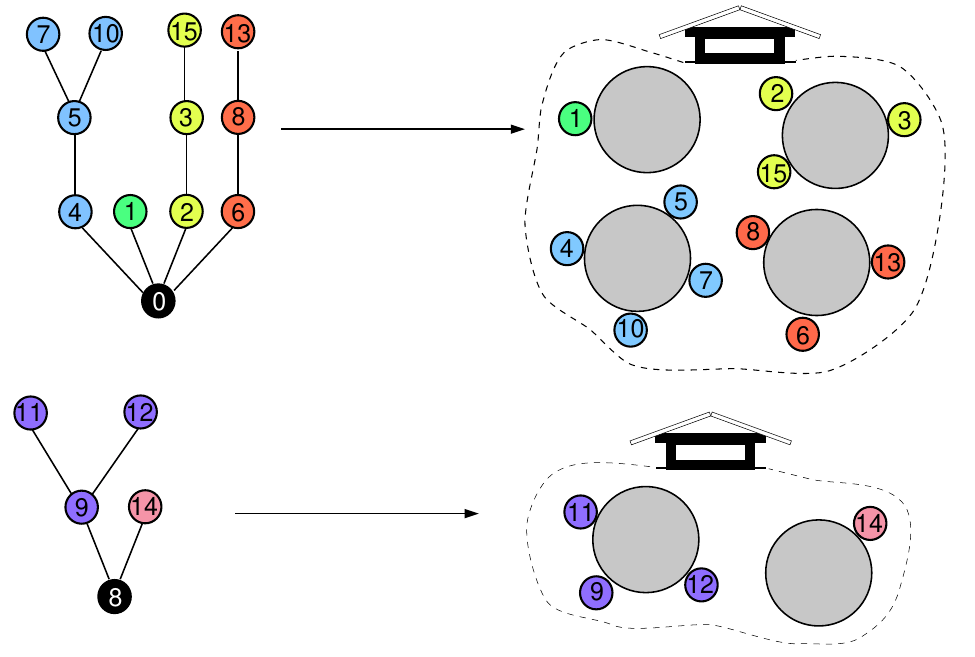} 
\end{center}
\caption{LHS: an increasing forest of size $16$, created by the (modified) growth process for increasing trees with immigration, $p=8$. RHS:the corresponding table structure with two 
competing chinese restaurants.}
\label{fig:PortSubtreeSizes2}
\end{figure}

There are plenty of parameters of interest in the chinese restaurant process with competition. One may study the number of tables occupied in the first restaurant, the second one, and so on, as well as the individual tables sizes in each restaurant.
Moreover, one may also consider the total number of tables in all restaurants, 
or the complete table structure in the original restaurant. These parameters are all intimately related to generalized P\'olya-Young urn models, as well as periodic triangular urns. In order to make this connection clear, we relate this new process to the increasing trees with immigration. 

\begin{theorem}[Chinese restaurant process with competition and increasing trees with immigration]
\label{ChineseThe}
Given a random partition of $\{1,\dots,N\}$ generated by the Chinese restaurant process with immigration, parameters $0<a<1$, $\theta>0$ 
and duration $p\in\N$. The random partition can be generated equivalently by the growth process of the family of a modified generalized
plane-oriented recursive trees $\mathcal{T}_{\alpha,\ell}$ with immigration parameter $p$, when generating such a tree of size $N+1$
with label set $\{0,1,\dots,N\}$. The parameters $a,\theta$ and $\alpha,\ell>0$, respectively, are related via
\begin{equation*}
	a=\frac{1}{1+\alpha},\qquad \theta=\frac{\ell}{1+\alpha}.
\end{equation*}
\end{theorem}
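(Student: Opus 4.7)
The plan is to establish a step-by-step coupling between the Chinese restaurant process with competition and the modified GPORT growth process with periodic immigration. The natural correspondence I would set up is: the original root (label~$0$) together with the $n = \lfloor N/p \rfloor$ immigration roots created at times $p, 2p, \dots, np$ play the role of the $n+1$ open restaurants; each child of a restaurant root is identified with a table in that restaurant, and the subtree rooted at that child corresponds to the set of customers seated at that table. Under this identification, partition blocks and subtrees hanging off restaurant roots are in canonical bijection, with matching cardinalities, so the partition structure can be read off from the tree.

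I would then proceed by induction on $N$. The base case $N=1$ is immediate, since both processes force the trivial partition $\{\{1\}\}$ (in the tree, node $1$ attaches to root $0$, the only option). For the inductive step the key computation is the total connectivity: with $N$ ordinary nodes, $n+1$ restaurant roots, and total outdegree equal to the number of non-root edges $N$, one finds
\[
C_N \;=\; N + N\alpha + (n+1)\ell \;=\; N(1+\alpha) + (n+1)\ell.
\]
The parameter identifications $a = 1/(1+\alpha)$ and $\theta = \ell/(1+\alpha)$ then yield the crucial relation $C_N = c_N/a$, which aligns the two normalisers. Splitting the move at time $N+1$ into two cases: first, if the new customer joins an existing table $t_i$ of size $|t_i|$, the associated subtree has $|t_i|$ nodes and $|t_i|-1$ internal edges, so
\[
\sum_{v \in \text{subtree}} \frac{d(v) + \alpha}{C_N} \;=\; \frac{|t_i|(1+\alpha) - 1}{c_N/a} \;=\; \frac{|t_i| - a}{c_N},
\]
matching the Chinese restaurant probability. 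Second, if the new customer opens a new table, it must attach directly to one of the restaurant roots; summing the weights $d(r_j)+\ell$ over all $n+1$ restaurant roots (whose total outdegree equals the number of tables $m$) gives
\[
\sum_{j=0}^{n} \frac{d(r_j) + \ell}{C_N} \;=\; \frac{m + (n+1)\ell}{c_N/a} \;=\; \frac{ma + (n+1)\theta}{c_N},
\]
again matching.

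Finally, the deterministic immigration events must be synchronised with the arrival schedule: at step $N = mp$ a new immigration root is created immediately after the insertion of the $mp$-th ordinary customer, which realises the opening of the $(m+1)$-th restaurant and updates the normaliser from $c_{N-1}$ to $c_N = N + (m+1)\theta$. The main obstacle I anticipate is precisely this bookkeeping around the immigration steps: one has to verify carefully that the $\theta$-boost to the singleton weight at the Chinese restaurant side is exactly realised by a freshly created (empty) restaurant root contributing $\ell = \theta/a$ to $C_N$, and that the coupling remains consistent across the single step where $n$ jumps up. Once this is in place, the induction closes and Theorem~\ref{ChineseThe} follows.
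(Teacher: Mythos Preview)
Your proposal is correct and follows essentially the same route as the paper: both arguments identify restaurant roots with tree roots and tables with root-branches, compute the total connectivity $C_N=N(1+\alpha)+(n+1)\ell$, then sum the attachment weights $d(v)+\alpha$ over a branch (using $\sum_{v\in t_i}d(v)=|t_i|-1$) and the weights $d(r_j)+\ell$ over the roots to recover the two Chinese-restaurant probabilities after dividing numerator and denominator by $1+\alpha$. Your explicit observation $C_N=c_N/a$ and the inductive framing are a slightly cleaner packaging of the same computation.
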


\begin{remark}[Creation of the first new root]
We note that the first new root is actually created after label $p$ is entered. 
As the original root is labeled zero, this requires $p$ insertions instead of $p-1$, is a modification of the growth process of increasing trees with immigration.
\end{remark}

\begin{proof}[Proof of Theorem~\ref{ChineseThe}]
Concerning the total connectivity~\eqref{eqn:Connectivity2}, we have to take care as there are $N+1$ labels, zero up to $N$, present.
Moreover, as the original root has connectivity $\ell+d(r)$, the total connectivity changes to,
\begin{equation}
\label{eqn:ConnectivityNew}
C_N
=N(1+\alpha) + (n+1)\ell.
\end{equation}
Given a size-$N$ forest $T$ of the family $\mathcal{T}_{\alpha,\ell}$ with labels $\{0,1,\dots ,N\}$. 
Assume that the original root has $m_0$ branches $t_i$, $1\le i\le m_0$, of sizes $|t_i|$, and the new roots labeled $jp$ created at times $p,2p,\dots, np$ each have $m_j$ branches $t_i$, $1\le i\le m_j$, $1\le j\le n$. Moreover, we denote with $m=\sum_{j=0}^{n}m_j$ the total number of branches.
At time $N + 1$ the element $N + 1$ is either attached to one of the 
existing non-root nodes $v$ with probability
\begin{equation*}
\begin{split}
\P\{N+1<_c v\}&=\frac{d(v)+\alpha}{C_N},
\end{split}
\end{equation*}
or to the $n+1$ different roots of the tree with probability 
\[
\P\{N+1<_c \text{root}_j\}=\frac{d(\text{root}_j)+\ell}{C_N}
=\frac{m_j+\ell}{C_N}.
\]
Consequently, element $N+1$ is a singleton with probability
\[
\sum_{j=0}^{n}\P\{N+1<_c \text{root}_j\}=\frac{m+(n+1)\ell}{N(1+\alpha) + (n+1)\ell}
=\frac{m\cdot\frac{1}{\alpha+1}+(n+1)\frac{\ell}{\alpha+1}}{N + (n+1)\frac{\ell}{\alpha+1}}.
\] 
Moreover, element $N+1$ is attached to one of the branches $t_i\in T$ with probability
\begin{equation*}
\begin{split}
\P\{N+1<_c t_i\}&=\sum_{v\in t_i}\P\{N+1<_c v\}=\sum_{v\in t_i}\frac{d(v)+\alpha}{C_N}\\
&=\frac{|t_i|-1+|t_i|\alpha}{N(1+\alpha) + (n+1)\ell}=\frac{|t_i|-\frac{1}{\alpha+1}}{N + (n+1)\frac{\ell}{\alpha+1}}.
\end{split}
\end{equation*}
Thus, setting $a=\frac{1}{1+\alpha}$ and $\theta=\frac{\ell}{1+\alpha}$ proves the stated relation.
\end{proof}

We note that quantities of interest include the number of tables of the first (original) restaurant, 
the table sizes, etc. By Theorem~\ref{ChineseThe} and Theorem~\ref{theOutdegree}, slightly modified according to anew initial value, the distribution of number of tables of the first (original) restaurant as well as the limit law is covered by Lemma~\ref{lem:Triangular} and Theorem~\ref{theTriangular}. The number of costumers of the first restaurant is random variable, covered by Theorems~\ref{the1} and~\ref{the2}. Concerning the table structure, one may also study the number of tables of a certain size, compare with the results of~\cite{KuPa2014}, where we anticipate a Mixed-Poisson type phase transition. However, we need a new periodic urn model with multiple colors for the branching structure, which by Theorem~\ref{ChineseThe} also determines the number of tables of a certain size. The urn model is reminiscent of the model used for the block sizes in $k$-Stirling permutations~\cite{KuPaStir2011}, but its analysis seems to be much more involved. 

\begin{defi}[Periodic urn model for branches]
\label{defUrnTables}
Given the period $p$. Consider a periodic urn models with balls of $j+2$ colours and let $(Z_{N,0},Z_{N,1}\dots,Z_{N,j+1})$ count
the number of balls of each color at time $N$. We use the ball addition matrices
\begin{equation*}
    M_k= \left(
    \begin{smallmatrix}
        1 & \alpha & 0 & \cdots & 0 & 0 & 0& 0 \\[-1ex]
        0 & -\alpha & 2(\alpha+1)-1 & \ddots & \ddots & \ddots & 0& 0 \\[-1ex]
        0 & 0 & -2(\alpha+1)+1 & 3(\alpha+1)-1 & \ddots & \ddots & 0 & 0\\[-1ex]
        \vdots & \ddots & \ddots & \ddots & \ddots &
        \ddots & \vdots & 0\\[-1ex]
        \vdots & \ddots & \ddots & \ddots & \ddots &
        \ddots & \vdots & 0\\[-1ex]
                0 & \ddots & \ddots & \ddots & 0 & -(j-1)(\alpha+1)+1 &  j(\alpha+1)-1& 0\\[-1ex]
        0 & \ddots & \ddots & \ddots & 0 & 0 & -j(\alpha+1)+1 & (j+1)(\alpha+1)-1\\
        0 & 0 & 0 & \cdots & 0 & 0 & 0& 1+\alpha
    \end{smallmatrix}
    \right)
\end{equation*}
for $1\le k\le p-1$, and
\begin{equation*}
M_{p}= \left(
    \begin{smallmatrix}
        1 & \alpha & 0 & \cdots & 0 & 0 & 0& \ell \\[-1ex]
        0 & -\alpha & 2(\alpha+1)-1 & \ddots & \ddots & \ddots & 0& \ell \\[-1ex]
        0 & 0 & -2(\alpha+1)+1 & 3(\alpha+1)-1 & \ddots & \ddots & 0 & \ell\\[-1ex]
        \vdots & \ddots & \ddots & \ddots & \ddots &
        \ddots & \vdots & \ell\\[-1ex]
        \vdots & \ddots & \ddots & \ddots & \ddots &
        \ddots & \vdots & \ell\\[-1ex]
                0 & \ddots & \ddots & \ddots & 0 & -(j-1)(\alpha+1)+1 & \ell j(\alpha+1)-1& \ell\\[-1ex]
        0 & \ddots & \ddots & \ddots & 0 & 0 & -j(\alpha+1)+1 & (j+1)(\alpha+1)-1+\ell\\
        0 & 0 & 0 & \cdots & 0 & 0 & 0& 1+\alpha+\ell
    \end{smallmatrix}
    \right)
\end{equation*}
The initial configuration of the urn is specified by
$(Z_{1,0},\dots,Z_{1,j+2})=(\ell,0,0,\dots,0)$.
\end{defi}
Next we state the connection between the branching structure (and thus the table structure) and the urn model.
\begin{prop}
The random variables $(Z_{N,1},\dots,Z_{N,j})$ of the urn model in the definition stated before 
are related to the number branches $B_{N,m}$ of size $m$ of the original root node in generalized plane-oriented recursive trees with immigration by
\[
B_{N,m}=\frac{Z_{N,m}}{m(\alpha+1)-1},\quad 1\le m\le j.
\]
\end{prop}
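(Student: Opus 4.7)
The plan is to set up a direct coupling between the urn evolution and the tree growth process, and verify by induction on $N$ that at every time step color $m$ encodes exactly the total connectivity weight of the size-$m$ branches rooted at the original root. The crucial book-keeping identity is that any branch of size $m$ of a generalized plane-oriented recursive tree with parameter $\alpha$ contributes total weight
\[
\sum_{v\in\text{branch}}\bigl(d(v)+\alpha\bigr)=(m-1)+m\alpha=m(\alpha+1)-1,
\]
since the $m$ nodes in a tree with $m$ vertices have total outdegree $m-1$. Thus if the urn identity $Z_{N,m}=(m(\alpha+1)-1)\cdot B_{N,m}$ holds for all $m$, then drawing a ball uniformly at random from color $m$ in the urn has the same probability as attaching the new node to \emph{some} vertex inside a size-$m$ branch of the root under the growth rule from Section~\ref{sec:inc}.

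The next step would be to identify the meaning of the remaining colors. Color $0$ stores the root's own attractiveness $\ell+d(r)$: the initial $\ell$ matches the immigration weight of the root, and the $+1$ on the diagonal of row $0$ exactly reflects the increase of $d(r)$ by one when a new node connects directly to the root. In that event, a fresh size-$1$ branch appears with weight $\alpha$, which accounts for the entry $\alpha$ in row $0$. Color $j+1$ plays the role of a pooled remainder: the attractiveness of all non-root vertices outside tracked branches together with the immigrated roots and their descendants; the $1+\alpha$ on row $j+1$ records the usual increment of one degree plus the weight $\alpha$ of the new leaf, and the extra $\ell$ appearing in $M_p$ accounts for the connectivity of the new root created at every multiple of $p$. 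Because the total number of balls equals the total connectivity $C_N$ from~\eqref{eqn:ConnectivityNew}, the uniform draw in the urn literally reproduces the attachment probabilities of the growth process.

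With the interpretation fixed, the induction step is essentially mechanical. Drawing color $m$ for $1\le m\le j-1$ means the new node attaches inside a size-$m$ branch which thereby becomes a size-$(m+1)$ branch, so one subtracts its previous weight $m(\alpha+1)-1$ from $Z_{\cdot,m}$ and adds its new weight $(m+1)(\alpha+1)-1$ to $Z_{\cdot,m+1}$; this is precisely the row $m$ of $M_k$. Drawing color $j$ has the same effect but pushes the branch past the tracking threshold, so the weight migrates into $Z_{\cdot,j+1}$, matching the last tracked row. Drawing color $0$ creates a size-$1$ branch of weight $\alpha$, matching row $0$. The base case $N=1$ is immediate: only the root is present, so $B_{1,m}=0=Z_{1,m}$ for $1\le m\le j$, while the initial $\ell$ balls of color $0$ encode the initial root connectivity.

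The one point that needs care, and which I expect to be the main obstacle, is the periodic step when $M_p$ is applied and a new root appears. Here the identity $Z_{N,m}=(m(\alpha+1)-1)B_{N,m}$ must continue to hold for $1\le m\le j$, since the branch structure of the \emph{original} root is unaffected by the birth of a new immigrated root; this is consistent with the fact that rows $0,1,\dots,j$ of $M_p$ agree with those of $M_k$ except for the extra $\ell$ placed into the remainder color $j+1$. Hence the invariant is preserved under $M_p$ as well, and the induction closes, yielding the stated identity for all $N\geq 1$ and $1\leq m\leq j$.
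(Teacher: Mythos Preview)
Your argument is correct and follows essentially the same route as the paper: both proofs identify color~$0$ with the root's connectivity, colors~$1,\dots,j$ with the aggregated connectivity $m(\alpha+1)-1$ of the size-$m$ branches, and color~$j+1$ as a dummy collecting everything else (including the immigrated roots, whence the extra~$\ell$ in $M_p$). Your write-up is in fact more detailed than the paper's, which gives only the interpretation of the colors and leaves the coupling/induction implicit; your explicit derivation of the branch weight $\sum_{v}(d(v)+\alpha)=(m-1)+m\alpha$ and your case-by-case verification of the rows make the correspondence fully transparent.
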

\begin{proof}
The color zero represents the root node. The random variable $Z_{N,0}$ encodes the connectivity of the original root, which increases by one
after a new node has been attached to it. The random variable $Z_{N,m}$, $1\le m\le j$, represent the connectivity of the subtrees of size $m$, given by $m(\alpha+1)-1$. Once a new node is attached to a size $m$ trees, the tree size changes to $m+1$, 
and thus we add a $(m+1)(\alpha+1)-1$ for type $m+1$, but remove the $m(\alpha+1)-1$ from color $m$. 
Finally, the last color $j+1$ is a dummy color (sometimes called super color or super ball in the literature), collecting
all other nodes, including the new roots. Consequently, at multiples of $p$ the dummy color increases by $\ell$, regardless of the sampled color. 
\end{proof}
The analysis of this model seems to be much more complicated, as we are also interested in letting $j$ tend to infinity. We plan to report on these distributional results in a future work.

\subsection{Adding a cocktail bar\label{subsec:bar}}
M\"ohle~\cite{Moehle2021} considered a chinese restaurant process, where an additional bar also attracts costumers. 
We generalize this process to restaurants with competition\footnote{Our choice of parameters slightly differs from~\cite{Moehle2021}.}.
\begin{defi}[Chinese restaurant process with and a single bar and competition]
Let $p\in\N$ denote the duration, the time it takes to create a new restaurant and $N=np+k$, with $n\in\N$, $0\le k< p$. 
We start with a single restaurant with tables and also a cocktail bar. 
All tables and the bar are assumed to have infinite capacity. At the beginning the first original restaurant and the bar is empty. 
Let the quantity $c_N$ be given by
\[
c_N=c_N=np+k+(n+1)\theta_1+\theta_2=N+(n+1)\theta_1+\theta_2,
\]
with $\theta_1$, $\theta_2$ denote two positive real parameters.
If at stage $N=np+k$ there are $b\ge 0$ customers at the bar and $m$ occupied tables
with $t_i>0$ customers at the $i$th table, $1\le i\le m$, the new customer $N+1$
sits
\begin{itemize}
	\item at the bar with probability $\displaystyle{\P\{N+1<_c \text{Bar}\}=\frac{b+\theta_2}{c_N}}$
	\item at table $t_i$, $1 \le i \le m \le N$ of sizes $|t_i|$ with probability
\[
\P\{N+1<_c t_i\}=\frac{|t_i|-a}{c_N},\quad 1\le i\le m,
\]
or at new new table with probability
\[
\P\{N+1<_c t_{m+1}\}=\frac{m \cdot a+(n+1)\theta_1}{c_N}.
\]	
\end{itemize} 
\end{defi}

\begin{remark}[Boundary cases]
In the boundary case $\theta_2=0$ the bar stays empty and can be neglected.
In the case $\theta_1=0$ all customers go to the bar, but no one stays at restaurant(s). 
\end{remark}

\begin{remark}[More cocktail bars]
One may readily generalize the process in various ways. For example, new bars could open at times $p$, $2p$,\dots,;
or bars open according to a period $p_2\neq p_1=p$. 
\end{remark}

We can readily define a new family $\mathcal{T}_{\alpha,\beta,\ell}$ of increasing trees with immigration, whose branching structure correspond to the new process with a bar. Here $\alpha,\beta$ and $\ell$ denote positive real parameters. Let the quantity $C_n$ be given by
\begin{equation*}
C_N
=(N+1)(1+\alpha) + n\ell-1 + \ell-\alpha+\beta,
=N(1+\alpha) + (n+1)\ell+\beta.
\end{equation*}
In the following we distinguish between three types of nodes: first, the root of the bar tree called $r_{\text{bar}}$ (labeled zero);
second, the original root labeled zero, as well as the root nodes created at times $p,2p,\dots$, and 
third the ordinary nodes labeled $1,2,\dots$ inserted into the forest. 
\begin{itemize}
\item Step $1$: The process starts with two roots labeled by $0$: a special "bar root" $r_{\text{bar}}$, as well as another root node. 
\item Step $i+1$ with $i\ge 0$: the node with label $i+1$ is attached to any previous non-root ordinary node $v$ with out-degree $d(v)$ 
in the increasing forest with $i$ ordinary non-root nodes with probabilities 
$\displaystyle{p(v)  =\frac{d(v)+\alpha}{C_i}}$. Additionally, the node with label $i+1$ is attached to any previously created new root nodes $v$, there are $\big\lfloor \frac{i}{p}\big\rfloor$ many of them, as well as the original root, with probability given by $\displaystyle{p(v)  = \frac{d(v)+\ell}{C_i}}$. The bar root node $v$ attracts node $i+1$ with probability
$\displaystyle{p(v)  =\frac{d(v)+\beta}{C_i}}$.
\item Step $i+1$, with $i+1\mod p=0$: a new root is created.
\end{itemize}

The correspondence between the increasing trees and the restaurant process with a bar follows now similar to the 
proof of Theorem~\ref{ChineseThe}. The only new ingredient is the connection between the tree rooted at the bar root node 
and the bar itself. The rest of the argument can be copied verbatim. Assume that the bar tree $B$ has consists of $b$ non-root nodes.
\begin{align*}
\P\{N+1<_c B\}&=\sum_{v\in B}\P\{N+1<_c v\}\\
&=\sum_{v\in B\setminus\{r_{\text{bar}}\}}\P\{N+1<_c v\}+\P\{N+1<_c r_{\text{bar}}\}\\
&=\sum_{v\in B\setminus\{r_{\text{bar}}\}}\frac{d(v)+\alpha}{C_N} + \frac{d(r_{\text{bar}})+\beta}{C_N}
= \frac{b+b\alpha+\beta}{N(1+\alpha) + (n+1)\ell+\beta}\\
&=\frac{b+\frac{\beta}{\alpha+1}}{N+(n+1)\frac{\ell}{\alpha+1}+\frac{\beta}{\alpha+1}}.
\end{align*}
Setting $a=\frac{1}{1+\alpha}$, $\theta_1=\frac{\ell}{1+\alpha}$ and $\theta_2=\frac{\beta}{1+\alpha}$ leads to the following theorem.

\begin{theorem}[Increasing trees and Chinese restaurant process with a single bar and competition]
Given a random partition of $\{1,\dots,N\}$ generated by the Chinese restaurant process with immigration and a bar, parameters $0<a<1$, $\theta_1,\theta_2>0$ and duration $p\in\N$. The random partition can be generated equivalently by the growth process of the family of a modified generalized plane-oriented recursive trees $\mathcal{T}_{\alpha,\beta,\ell}$, with immigration parameter $p$, when generating such a tree of size $N+1$ with label set $\{0,1,\dots,N\}$. This family of trees starts with two roots labeled by $0$: a special "bar root", as well as an ordinary root node. The parameters $a,\theta_1,\theta_2$ and $\alpha,\beta,\ell>0$, respectively, are related via
\begin{equation*}
	a=\frac{1}{1+\alpha},\quad \theta_1=\frac{\ell}{1+\alpha},\quad \theta_2=\frac{\beta}{1+\alpha}.
\end{equation*}
\end{theorem}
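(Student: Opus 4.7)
The plan is to mirror the proof of Theorem~\ref{ChineseThe} and simply append the contribution from the bar tree, which is already computed in the paragraph immediately preceding the statement. The strategy is to identify the random partition generated by the process on the restaurant side with a partition read off from the tree-growth process, where the occupied tables in the original/new restaurants correspond to the branches of the ordinary roots (the one labeled $0$ plus those created at times $p, 2p, \dots$) and the multiset of customers at the bar corresponds to the set of non-root nodes in the subtree rooted at the special bar root $r_{\text{bar}}$.

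First I would verify the total connectivity formula
\[
C_N = N(1+\alpha) + (n+1)\ell + \beta,
\]
by counting attractivities: each of the $N$ non-root nodes contributes $d(v)+\alpha$, summing to $(N-1)\cdot 1 + N\alpha$ plus a correction for the $n+1$ ordinary roots (original plus those created at multiples of $p$), each contributing $d(v)+\ell$; and finally the bar root contributes $d(r_{\text{bar}})+\beta$. Summing outdegrees via the fact that every non-root node is exactly one child gives the stated expression for $C_N$.

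Next, for a branch $t_i$ attached to any ordinary root I would repeat the computation from the proof of Theorem~\ref{ChineseThe}:
\[
\P\{N+1 <_c t_i\} = \sum_{v\in t_i}\frac{d(v)+\alpha}{C_N} = \frac{|t_i|-\frac{1}{\alpha+1}}{N+(n+1)\frac{\ell}{\alpha+1}+\frac{\beta}{\alpha+1}},
\]
after dividing numerator and denominator by $1+\alpha$. Summing the contributions $\frac{d(\text{root}_j)+\ell}{C_N}$ over the $n+1$ ordinary roots gives the probability of creating a new singleton table, yielding exactly $\frac{m\cdot a + (n+1)\theta_1}{c_N}$ under the claimed parameter substitution $a=\tfrac{1}{1+\alpha}$, $\theta_1=\tfrac{\ell}{1+\alpha}$. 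The bar computation is the one already displayed just above the statement, producing $\frac{b+\theta_2}{c_N}$ with $\theta_2=\tfrac{\beta}{1+\alpha}$.

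Finally I would verify by induction on $N$ that the two joint laws (on the sequence of increments) coincide: at $N=1$ both start with the trivial partition $\{\{1\}\}$ and an empty bar, and at each step the three cases — join an existing table, open a new one, or go to the bar — match up probability-for-probability. Since no new phenomenon arises for $N$ a multiple of $p$ beyond what is already accounted for in the connectivity $C_N$ (the new root contributes an additional $\ell$, which is exactly the jump in $(n+1)\theta_1$), the induction closes. The main potential obstacle is purely bookkeeping: making sure that the bar root's connectivity $\beta$ is handled symmetrically with $\ell$ for the ordinary roots, so that its contribution enters $c_N$ with weight $\theta_2$ and does not accidentally get counted among the new-table probabilities. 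No substantive new idea is needed.
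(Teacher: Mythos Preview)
Your proposal is correct and follows essentially the same approach as the paper: the paper's argument (given in the paragraph just before the theorem statement) likewise says that the correspondence follows verbatim from the proof of Theorem~\ref{ChineseThe}, with the only new ingredient being the bar-tree probability computation you cite. Your added verification of $C_N$ and the explicit inductive matching of the three transition types are mild elaborations on what the paper leaves implicit, not a different route.
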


\section{Summary and Outlook}
\subsection{Summary}
We introduced and analyzed a generalized P\'olya-Young urn model. We obtained exact formulas for the distribution and the moments of the number of white balls, as well as limit laws. The limit laws involve the Beta distribution, the generalized Gamma distribution and also the local time of noise-reinforced Bessel process. 
We also added second order limit theorems, stating a central limit theorem for the martingale tails sums. Crucial for these results is the martingale structure of the urn model. Furthermore, we generalized ordinary increasing trees by allowing nodes to immigrate, leading to increasing forests. This new model was shown to be directly connected to generalized P\'olya-Young urn models. Moreover, we also looked at periodic triangular urns, which also generalize the P\'olya-Young urn model. For this class of urns the limit law also involves the Mittag-Leffler distribution with three parameters.

\begin{figure}[!htb]
\includegraphics[scale=0.65]{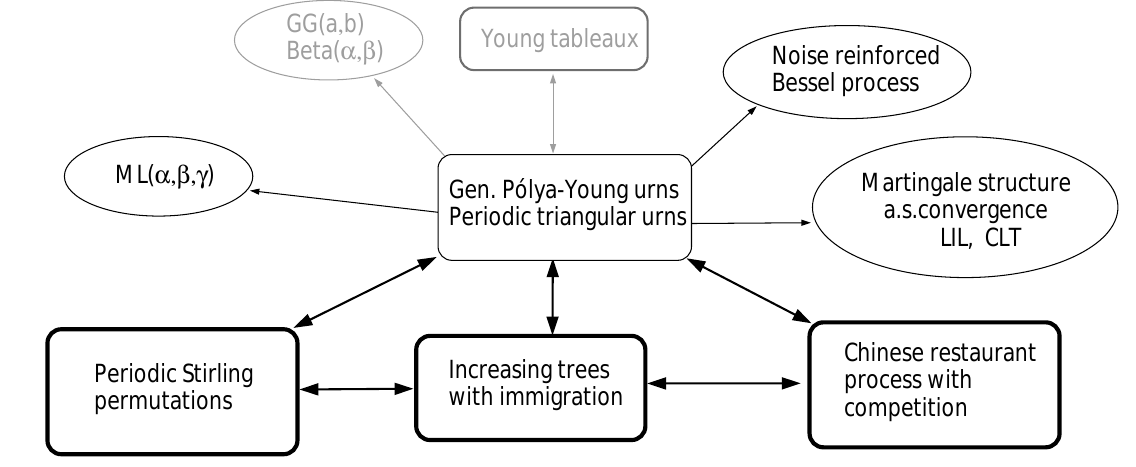}%
\caption{A schematic overview 
of the different connections for generalized P\'olya-Young urns and periodic triangular urn; we also collected (in gray) the connections established before in~\cite{BMW2020}.}%
\end{figure}

Finally, we related both urn models and increasing trees with immigration to a new version of the chinese restaurant process, where new restaurants open and compete with the previously established restaurants for customers. 
The analysis of many parameters in the chinese restaurant process, as well as increasing trees with immigration and also periodic $d$-Stirling permutations
can be reduced to the analysis of the corresponding urn model.

\begin{table}[!htb]
\renewcommand{\arraystretch}{2.5}
\begin{tabular}{|c|c|c|}
\hline
\textbf{Urn model} & \makecell{\textbf{Chinese restaurant process} \\ \textbf{with competition}} & \makecell{\textbf{Increasing trees}\\ \textbf{with competition}} \\
\hline
Generalized P\'olya-Young & \makecell{Number of costumers\\ of specific restaurant} & Descendants of label/root $j$ \\
\hline
Periodic Triangular & \makecell{Number of tables\\ of specific restaurant} & Degree of label/root $j$\\
\hline 
\makecell{Periodic urn \\for branches} & \makecell{Tables structure\\ of specific restaurant} &  \makecell{Branching structure of\\ a tree}\\
\hline
\end{tabular}
\caption{Connection between certain parameters in urn models, the chinese restaurant process, as well as increasing trees. 
Additional connections to periodic $d$-Stirling permutations are given in Section~\ref{sec:Stir}.}
\label{Table1}
\end{table}

\subsection{Outlook}
There are plenty of open questions and new directions for further investigations; see also the discussion in Subsection~\ref{subsec:comp}
and the connections summarized in Table~\ref{Table1}. From the viewpoint of urn models the effect of periodicity to urn models with a normal limit or so-called large urns (or mixture models) has not been investigated. It is also of interest to extend the current work to unbalanced (triangular) urn models.  Additionally, one may look at urn models with randomized entries and study the effect of the periodicity. Furthermore, the analysis of parameters in increasing trees with immigration, as well as the chinese restaurant process with competition is certainly of interest; in particular, a combinatorial model for increasing trees with immigration is highly desirable, as it would enable us to use analytic combinatorial frameworks and schemes.  As mentioned earlier, the author plans to report on limit laws for the branching structure and the table sizes in the chinese restaurant process in subsequent work, as well as for the different label-dependent limit laws 
in increasing trees with immigration.

\smallskip

Finally, we introduce an even more general urn model.
\begin{defi}[P\'olya urns associated to a sequence]
Given different replacement $m\times m$ matrices $A_1, A_2,\dots$ and a sequence of positive integers $(b_n)_{n\ge 1}$. 
A P\'olya urn associated to the sequence $(b_n)$ applies at time $n$ the replacement matrix $A_{b_n}$. 
\end{defi}

\begin{example}[Two-type ordinary periodic P\'olya urns]
\label{Ex:PoYo1}
Let $m=2$. We choose two replacement matrices $A_1$ and $A_2$ for two different colors. Let the integer $p>1$. We consider the periodic sequence
\[
b_n=\begin{cases}
2,\quad n\in p\N,\\
1, \quad n\notin p\N.
\end{cases}
\]
Thus, we apply $M_1=\dots=M_{p-1}=A_1$ and at every $p$th step we apply matrix $M_p=A_2$.
\end{example}

\begin{example}[Two-type P\'olya-Young urns associated to a sequence]
\label{Ex:PoYo2}
Let again $m=2$. We choose two replacement matrices $A_1$ and $A_2$, 
given by
\[
A_1=
\left(
\begin{matrix}
\sigma&0\\
0&\sigma\\
\end{matrix}
\right),
\quad\text{and }
A_2=
\left(
\begin{matrix}
\sigma&\ell\\
0&\sigma+\ell\\
\end{matrix}
\right).
\]
and a sequence $(b_n)$ with $b_n\in \{1,2\}$. We note that for all such sequences the finite time results of Theorem~\ref{the1} stay valid with $N$ replaced by $n$, except for the asymptotics of $g_N$. However, we expect that the asymptotic analysis of $g_N$ for general classes of sequences $b_n$ is much involved, as the total number of balls at time $n$ becomes more irregular. 
For example, consider the (shifted) Thue-Morse sequence $b_n=t_n+1$, with $t_0=0$, $t_{2n}=t_n$, $t_{2n+1}=1-t_n$. 
Moreover, an interesting question is at what frequency the effect of $A_2$ is negligible and to determine the corresponding sequences $(b_n)$and their properties. 
\end{example}
Similarly, we can readily define increasing trees with immigration according to sequence $(b_n)$, 
a corresponding model of a chinese restaurant process with competition, or $d$-Stirling permutations associated to $(b_n)$, 
and their parameters correspond to our urn models of Examples~\ref{Ex:PoYo1} and~\ref{Ex:PoYo2}.

\section*{Acknowledgments}
The author thanks Cyril Banderier and Michael Wallner for discussions about P\'olya-Young urns
and also about their connection to urn models with multiple drawings, which initiated this research. 
Furthermore, the author is indebted to Alois Panholzer for encouragement and interesting discussions about increasing trees with immigration, label-based parameters and urns models.

\section*{Declarations of interest}
The authors declare that they have no competing financial or personal interests that influenced the work reported in this paper. 

\bibliographystyle{siam}
\bibliography{PolyaYoungUrns-refs}{}


\end{document}